\numberwithin{equation}{section}
\numberwithin{figure}{section}
\theoremstyle{plain}
\newtheorem{thm}{\protect\theoremname}[section]
\theoremstyle{definition}
\newtheorem{example}[thm]{\protect\examplename}
\theoremstyle{plain}
\newtheorem{conjecture}[thm]{\protect\conjecturename}
\theoremstyle{plain}
\newtheorem{cor}[thm]{\protect\corollaryname}
\theoremstyle{plain}
\newtheorem{lem}[thm]{\protect\lemmaname}
\theoremstyle{plain}
\newtheorem{prop}[thm]{\protect\propositionname}
\theoremstyle{remark}
\newtheorem{rem}[thm]{\protect\remarkname}
\def\makebbb#1{
    \expandafter\gdef\csname#1\endcsname{
        \ensuremath{\Bbb{#1}}}
}\makebbb{R}\makebbb{N}\makebbb{Z}\makebbb{C}\makebbb{H}\makebbb{E}\makebbb{H}\makebbb{P}\makebbb{B}\makebbb{Q}\makebbb{E}
\providecommand{\conjecturename}{Conjecture}
\providecommand{\corollaryname}{Corollary}
\providecommand{\examplename}{Example}
\providecommand{\lemmaname}{Lemma}
\providecommand{\propositionname}{Proposition}
\providecommand{\remarkname}{Remark}
\providecommand{\theoremname}{Theorem}
\begin{document}
\title{Conical Calabi-Yau metrics on toric affine varieties and convex cones}
\author{Robert J. Berman,}
\begin{abstract}
It is shown that any affine toric variety $Y,$ which is $\Q-$Gorenstein,
admits a conical Ricci flat Kähler metric, which is smooth on the
regular locus of $Y.$ The corresponding Reeb vector is the unique
minimizer of the volume functional on the Reeb cone of $Y.$ The case
when the vertex point of $Y$ is an isolated singularity was previously
shown by Futaki\textendash Ono\textendash Wang. The proof is based
on an existence result for the inhomogeneous Monge-{}-Ampère equation
in $\R^{m}$ with exponential right hand side and with prescribed
target given by a proper convex cone, combined with transversal a
priori estimates on $Y.$
\end{abstract}

\maketitle

\section{Introduction}

The problem of finding canonical metrics on a (polarized) complex
projective algebraic manifold $X$ has a long and rich history leading
up to the Yau\textendash Tian\textendash Donaldson conjecture, which
reduces the existence problem to verifying a purely algebro-geometric
property called K-stability. In the case of Fano manifolds the conjecture
was settled rather recently (see the survey \cite{do2}). The projective
framework has also been generalized to the the setting of singular
affine varieties $Y$ \cite{c.s0,C-S,d-s,d-sII,l-l-x}. Apart from
the complex-geometric motivations - coming, in particular, from the
study of tangent cones at log terminal singularities and connections
to the Minimal Model Program (as discussed in the survey \cite{l-l-x})
- an important motivation for this generalization comes from the AdS/CFT
correspondence in theoretical physics, relating geometry to conformal
field theory, or more precisely superconformal gauge theories \cite{m-s-y,m-s-y0,g-m-s-y}.

\subsection{\label{subsec:Background}Background}

We will be mainly concerned with the case of\emph{ toric} affine Calabi\textendash Yau
varieties, but we start by recalling some general background. Let
$Y$ be a complex affine normal variety of dimension $m,$ which is
$\Q$-Gorenstein (i.e. a reflexive tensor power of the canonical sheaf
of $Y$ is a well-defined line bundle on $Y$). Assume that $Y$ is
endowed with a\emph{ good action} by a compact torus $T.$ This means
that the complexification $T_{\C}$ acts holomorphically and effectively
on $Y$ and there is a unique fixed point $y_{0}$ such that that
the closure of any orbit of $T_{\C}$ in $Y$ contains $y_{0}$ \cite{l-l-x}.
The point $y_{0}$ in $Y$ is called the\emph{ vertex point} of $(Y,T).$
An element $\xi$ in the Lie algebra of $T$ is said to be a \emph{Reeb
vector} if the weights $\lambda_{i}$ of the action by $\xi$ on the
coordinate ring $\mathcal{R}(Y)$ of $Y$ (or the ring of holomorphic
functions on $Y$) are non-negative and vanish only on constant functions.
An affine variety $(Y,\xi)$ decorated with a Reeb vector $\xi$ is
called a \emph{polarized affine variety} in \cite{c.s0}. More generally,
a vector field $\xi$ on $Y$ is said to be a Reeb vector field if
it is the infinitesimal generator of some Reeb vector. We will denote
by $T_{\xi}$ the corresponding minimal torus (in other words, the
orbits of $T_{\xi}$ in $Y$ coincide with the closure of the orbits
of the flow of $\xi$). The \emph{rank }of $\xi$ is defined as the
rank of the corresponding minimal torus $T_{\xi}$ and $\xi$ is said
to be \emph{quasi-regular} if it has rank one and otherwise $\xi$
is said to be \emph{irregular. }Equivalently, $\xi$ is quasi-regular
iff the leaf space of the holomorphic foliation of $Y,$ defined by
the complexification of $\xi,$ is Hausdorff.\emph{ }

In the case when $y_{0}$ is an isolated singularity in $Y$ it was
shown in \cite{C-S} that there exists a \emph{Calabi\textendash Yau
metric} on $Y_{reg}(=Y-\{y_{0}\})$, i.e. a Ricci flat Kähler metric
$\omega,$ which is\emph{ conical with respect to $\xi,$} i.e., 
\begin{equation}
\text{\ensuremath{\text{Ric \ensuremath{\omega}=}}0},\,\,\,\mathcal{L}_{-J\xi}\omega=2\omega\label{eq:def of conical cy intro}
\end{equation}
 iff $(Y,T,\xi)$ is\emph{ K-stable} for any torus $T$ containing
$\xi$ in its Lie algebra. Here $J$ denotes the complex structure
on $Y_{reg}$ and $\mathcal{L}_{-J\xi}$ denotes the Lie derivative
along the vector field on $Y_{reg}$ defined by $-J\xi,$ which generates
a $\R^{*}$-action on $Y$ with repulsive fixed point $y_{0}.$ The
K-stability of $(Y,T,\xi)$ is the affine analog of the notion of
K-stability of projective Fano manifolds, as appearing in the ordinary
projective Yau\textendash Tian\textendash Donaldson conjecture. It
amounts to a positivity condition for all special test configurations
(i.e. equivariant deformations) of $(Y,T,\xi).$ We also recall that
the conical Calabi\textendash Yau condition \ref{eq:def of conical cy intro}
equivalently means that $\omega$ is a metric cone over the link $(Y-\{0\})/\R^{*}$
of the affine singularity $Y,$ endowed with a smooth Sasaki\textendash Einstein
metric \cite{C-S}. Remarkably, already in the case when the link
is the topological five-sphere, there is an infinite family of Sasaki\textendash Einstein
metrics \cite{C-S,b-g-k}. 

The conical condition on the Kähler metric $\omega$ equivalently
means that $\omega$ may be expressed as 
\[
\omega=dd^{c}(r^{2}),\,\,\,dd^{c}:=\frac{i}{2\pi}\partial\bar{\partial}
\]
 for a function $r$ which is radial with respect to the corresponding
$\R^{*}$-action on $Y,$ i.e. $r$ is positive and 1-homogeneous
with respect to $\R^{*}.$ As a consequence, when $y_{0}$ is an isolated
singularity the function $r^{2}$ is automatically bounded (and even
continuous) in a neighborhood of $y_{0}.$ The condition that $\omega$
be Ricci flat equivalently means that $r^{2}$ solves the Calabi\textendash Yau
equation 
\begin{equation}
\left(dd^{c}(r^{2})\right)^{m}=i^{m^{2}}\Omega\wedge\bar{\Omega},\label{eq:CY intro}
\end{equation}
 where $\Omega$ is a nowhere vanishing $T$-equivariant (multi-valued)
holomorphic top form on $Y_{reg}.$ Any $(Y,T)$ admits such a form
$\Omega$ and it is uniquely determined, up to multiplication by a
complex constant. As a consequence, if there exists a Ricci flat Kähler
metric on $Y$ which is conical with respect to the Reeb vector $\xi,$
then $\xi$ has to satisfy the normalization condition 
\[
\mathcal{L}_{\xi}\Omega=im\Omega.
\]
As first shown in \cite{m-s-y0,m-s-y}, motivated by the AdS/CFT correspondence,
another more subtle condition on $\xi$ is that $\xi$ minimizes the
volume functional $V$ on the space of all normalized Reeb vectors: 

\begin{equation}
V(\xi):=\lim_{t\rightarrow0^{+}}t^{m}\sum_{i}e^{-t\lambda_{i}},\label{eq:V xi general intro}
\end{equation}
 where the sum runs over the infinite number strictly positive weights
$\lambda_{i}$ of $\xi,$ including multiplicity. As shown in \cite{c.s0}
this volume condition is also directly implied by the K-stability
of $(Y,T,\xi)$ (the corresponding test configuration is a product
test configuration). 
\begin{example}
Let $X$ be an $(m-1)$-dimensional Fano manifold, i.e. a compact
complex manifold whose anticanonical line bundle $-K_{X}$ is positive.
Then the $m$-dimensional variety $Y$ obtained by blowing down the
zero-section in the total space of $K_{X}\rightarrow X$ is an affine
Gorenstein variety endowed with a Reeb vector $\xi$, which is regular
(generating the standard $\C^{*}-$action along the fibers of $K_{X})$.
Moreover, a conical Calabi\textendash Yau metric $\omega_{Y}$ on
$(Y,\C^{*})$ corresponds to a Kähler metric $\omega_{X}$ on $X$
with constant positive Ricci curvature, defined as the ``horizontal''
part of $\omega_{Y}$ with respect to the fibration $Y\rightarrow X.$
However, even when $X$ does not admit any Kähler metric $\omega_{X}$
with constant positive Ricci curvature, it frequently happens that
there is another Reeb vector $\xi$ which does admit a conical Calabi\textendash Yau
metric. This is always the case when $X$ (and hence $Y)$ is toric,
in which case $\xi$ is obtained by minimizing the volume functional
on the Reeb cone of $Y$ \cite{f-o-w,c-f-o}. For example, when $X$
is $\P^{2}$ blown up in one or two points the corresponding ``minimal''
Reeb vector $\xi$ on $Y$ is irregular. Irregular conical Calabi\textendash Yau
metrics were first constructed in the physics literature as special
cases of a family $Y^{p,q}$ of explicit non-homogeneous metrics \cite{g-m-s-w}.
In fact, as shown in\cite{m-s}, these are all toric and $Y^{2,1}$
corresponds to $\P^{2}$ blown up in one point. 
\end{example}

In the general case, where the vertex point $y_{0}$ of $Y$ is not
assumed to be an isolated singularity, $\omega$ is said to be a\emph{
conical Calabi\textendash Yau metric for $(Y,T,\xi)$} if $\omega$
is a smooth conical Calabi\textendash Yau metric on $Y_{reg}$ and
moreover the corresponding Kähler potential $r^{2}$ is bounded in
a neighborhood of $y_{0}$ (see \cite{d-sII,l-w-x,l-l-x}). Such metrics
arise, in particular, on the tangent cones of Kähler\textendash Einstein
metrics with log terminal (klt) singularities \cite{d-sII}. The conjectural
extension of the result in \cite{C-S} to a non-isolated singularity
$y_{0}$ can thus be formulated as the following generalized affine
Yau\textendash Tian\textendash Donaldson conjecture:
\begin{conjecture}
(YTD) Let $Y$ be a normal affine variety which is $\Q$-Gorenstein,
endowed with a good action by a compact torus $T.$ Let $\xi$ be
a Reeb vector in the Lie algebra of $T.$ Then there exists a Calabi\textendash Yau
metric which is conical with respect to $\xi$ iff $(X,T,\xi)$ is
K-stable.
\end{conjecture}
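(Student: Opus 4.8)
The plan I would follow treats the \emph{toric} case --- $Y$ an affine toric variety and $T$ a maximal torus --- since (YTD) in full generality is open. For toric $(Y,T,\xi)$ the ``only if'' direction is essentially contained in the discussion above: a conical Calabi--Yau metric forces $\mathcal L_\xi\Omega=im\Omega$ and, by the product test configuration argument of \cite{c.s0}, forces $\xi$ to minimize the volume functional $V$ on the Reeb cone, and vanishing of the corresponding Futaki invariant --- which here is the differential of $V$ --- upgrades this to K-stability. Conversely, for toric $Y$ all special test configurations are torus-equivariant, so K-stability is a convex-geometric condition, and a direct computation identifies it with: $\xi$ is the unique critical point of $V$ on the normalization slice of the Reeb cone. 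Since $V$ is (a multiple of) the Laplace transform of the moment cone of $Y$, it is strictly convex and proper on that slice, so such a minimizer $\xi_0$ exists and is unique, and it is rational because $Y$ is $\Q$-Gorenstein. Thus the toric case reduces to the pure \emph{existence} statement: $(Y,T,\xi_0)$ admits a conical Calabi--Yau metric.

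\textbf{Reduction to a real Monge--Ampère problem.} Encode $Y$ by its moment cone $C\subset\R^m$, which is proper, pointed and rational because the torus action is good with a unique fixed vertex, and let $a_0$ be the rational ``anticanonical'' vector in the interior of the dual cone determined by $\Omega$ through the $\Q$-Gorenstein structure. A $T$-invariant Kähler metric $\omega=dd^c(r^2)$ with $r^2$ radial for $\xi_0$ corresponds, on the open orbit, to a convex function $u$ of the logarithmic coordinates $x=(\log|z_j|^2)_j$ with the homogeneity $u(x+t\xi_0)=e^{2t}u(x)$; boundedness of $r^2$ near the vertex becomes a growth condition on $u$, and smoothness on $Y_{reg}$ becomes regularity of $u$ across the toric divisors. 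Under this dictionary the Calabi--Yau equation $(dd^c(r^2))^m=i^{m^2}\Omega\wedge\bar\Omega$ becomes a real Monge--Ampère equation on $\R^m$ with exponential right-hand side, $\det D^2 u = c\,e^{\langle a_0,\,x\rangle}$, subject to the constraint that the gradient map $x\mapsto\nabla u(x)$ be onto the interior of the proper convex cone $C$ (the prescribed target); passing to the Legendre transform gives the equivalent transversal/Sasakian formulation on the compact slice $C\cap\{\langle\xi_0,\cdot\rangle=1\}$.

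\textbf{Existence by the variational method, then transversal estimates.} I would solve this variationally. All admissible potentials share the same Legendre domain, so the Monge--Ampère energy $E(u)$ is well defined up to a constant, and one minimizes a Ding-type functional $\mathcal D(u)=-\log\int e^{-u}-E(u)$ (in the slice picture, the ordinary Ding functional). The point of choosing $\xi=\xi_0$ is that its volume-minimality is exactly equivalent to a barycenter/balancing condition on $(C,a_0)$ that makes $\mathcal D$ coercive on the admissible class --- the affine-conical analogue of ``barycenter of the moment polytope at the origin $\Rightarrow$ properness of the Ding functional'' in the toric Fano theory. Coercivity, lower semicontinuity and compactness then yield a minimizer, hence a weak (Alexandrov/pluripotential) solution; it remains to show this is an honest conical Calabi--Yau metric smooth on $Y_{reg}$, via a $C^{0}$-bound (so that $r^2$ is bounded near the vertex and away from zero elsewhere), followed by transversal $C^{2}$ (Yau--Aubin) and Evans--Krylov $C^{2,\alpha}$ a priori estimates on the smooth compact link transverse to the Reeb foliation, and a bootstrap together with the toric boundary analysis.

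\textbf{Main obstacle.} The difficulty is the interplay of non-compactness and singularity: $Y$ is affine and its vertex need not be isolated, so $C$ is unbounded and its boundary is non-smooth, with the faces of $C$ recording the singular strata of $Y$. This enters twice. Proving properness of $\mathcal D$ requires ruling out all ``escape to infinity'' degenerations of the potentials, which is precisely where the sharp convex characterization of $\xi_0$ as volume minimizer must be invoked (an approximate geodesic-stability argument in the space of convex functions). And the a priori estimates must be made uniform transverse to the Reeb foliation despite the singularities, while one must also verify that the convex solution on $\R^m$ with gradient image $\operatorname{int}(C)$ genuinely extends to a smooth Kähler metric across the toric divisors and has bounded potential at the vertex --- a toric-regularity point made subtler by only assuming $\Q$-Gorenstein rather than Gorenstein. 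I expect this last cluster --- the transversal estimates and the passage from the weak convex solution to a genuine smooth metric on $Y_{reg}$ --- to be the main technical hurdle.
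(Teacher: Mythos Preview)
Your plan is essentially the paper's own approach in the toric case: reduce the equivalence with K-stability to volume minimization (the paper imports this from \cite{c.s0} and \cite{l-w-x} rather than arguing directly), translate the Calabi--Yau equation on the open orbit to a real Monge--Amp\`ere equation on $\R^m$ with exponential right-hand side and prescribed gradient image the moment cone, solve it by passing to the $(m-1)$-dimensional slice $\{\langle\xi,\cdot\rangle=1\}$ where the volume-minimality of $\xi$ becomes exactly the barycenter condition needed to apply \cite{be-be} (the paper records both the direct reduction and your variational/Ding-coercivity formulation), extract from this the $L^\infty$ bound on $r^2$ near the vertex, and finally prove smoothness on $Y_{reg}$ via transversal Laplacian and Evans--Krylov estimates on a resolution, using a carefully constructed barrier to handle the non-isolated singular locus.

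Two corrections. First, your claim that the volume-minimizing Reeb vector $\xi_0$ is rational because $Y$ is $\Q$-Gorenstein is false: irregular (irrational) minimizers already occur for the cone over $\P^2$ blown up in one point. You never use this, so just delete it. Second, your ``only if'' argument is too thin: product test configurations detect only the Futaki invariant, so vanishing there gives at best a critical-point condition, not K-stability against all special test configurations. The paper does not attempt a direct toric computation here but simply cites \cite[Cor 4.2]{l-w-x} for ``CY $\Rightarrow$ K-stable'' and \cite[Thm 6.1]{c.s0} for ``K-stable $\Rightarrow$ volume-minimizing''; you should do the same.
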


By \cite[Prop 4.8]{d-sII} such a Calabi\textendash Yau metric is
uniquely determined modulo the action of the group $\text{Aut \ensuremath{(X,T)_{0}}}$
of automorphisms of $Y,$ commuting with $T$ and homotopic to the
identity element. A version of the Yau-Tian-Donaldson for singular\emph{
projective} Fano varieties, involving the stronger uniform version
of K-stability, has recently been established in \cite{l-t-w,li2}. 

Note that we are adopting the terminilogy for K-stability used in
\cite{C-S}, which corresponds to the notion of K-\emph{poly}stability
in \cite{l-w-x,l-l-x}.

\subsection{Main results}

The main result in the present work establishes the previous conjecture
in the case when the $Y$ is an affine toric variety (recall that
a toric variety has $\Q-$Gorenstein singularities iff it it has klt
singularities \cite{dai}):
\begin{thm}
\label{thm:main intro}Let $Y$ be an affine toric variety which is
$\Q$-Gorenstein. Then the following is equivalent:
\begin{itemize}
\item $(Y,T,\xi)$ admits a conical Calabi\textendash Yau metric $\omega.$
\item $\xi$ is the unique minimizer of the volume functional on the space
of normalized Reeb vectors in the Lie algebra of the maximal torus
$T_{m}.$
\item $(Y,T,\xi)$ is K-stable. 
\end{itemize}
\end{thm}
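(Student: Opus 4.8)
The plan is to transport the problem to convex analysis on $\R^{m}$ via the toric dictionary, to solve the resulting inhomogeneous real Monge--Amp\`ere equation whose gradient image is prescribed to be a convex cone, and then to read off the three equivalences, two of which are essentially known. Write $Y$ as the affine toric variety of a strongly convex rational polyhedral cone $\sigma\subset N_{\R}\cong\R^{m}$, so that $\mathcal{R}(Y)$ is graded by the semigroup $\sigma^{\vee}\cap M$; the $\Q$-Gorenstein hypothesis provides a covector $\beta\in M_{\Q}$ with $\langle v_{i},\beta\rangle=1$ on the primitive generators $v_{i}$ of the rays of $\sigma$, automatically lying in $\operatorname{int}\sigma^{\vee}$ (equivalently, $Y$ is klt), and after rescaling $\xi$ the normalization $\mathcal{L}_{\xi}\Omega=im\Omega$ reads $\langle\beta,\xi\rangle=m$, the Reeb cone being $\operatorname{int}\sigma$. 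On the open orbit $(\C^{*})^{m}\subset Y_{reg}$, logarithmic coordinates $x=(\log|z_{1}|^{2},\dots,\log|z_{m}|^{2})$ identify a $T$-invariant conical K\"ahler potential $r^{2}$ with a convex function $f>0$ on $\R^{m}$ obeying the conical homogeneity $f(x+s\xi)=e^{s}f(x)$, i.e.\ $\langle\nabla f,\xi\rangle=f$, with gradient image in $\operatorname{int}\sigma^{\vee}$. Under this dictionary the Calabi--Yau equation \eqref{eq:CY intro} becomes the inhomogeneous real Monge--Amp\`ere equation $\det D^{2}f=c_{\Omega}e^{\langle\beta,x\rangle}$ for a constant $c_{\Omega}>0$, and the requirement that $\omega$ extend to a \emph{smooth} conical Calabi--Yau metric on $Y_{reg}$ with $r^{2}$ bounded near $y_{0}$ translates into: the gradient image of $f$ is exactly $\operatorname{int}\sigma^{\vee}$ (the prescribed target), the Legendre-dual symplectic potential has Guillemin-type boundary behaviour on the faces of $\sigma^{\vee}$ meeting the moment image of $Y_{reg}$, and $f$ is bounded above on the transversal slice $\{\langle\xi,x\rangle=0\}$. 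Since $\langle\beta,\xi\rangle=m=\dim Y$, the substitution $f\mapsto e^{-s}f(\cdot+s\xi)$ is a symmetry of the equation preserving the conical, $T$-invariant solutions.

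The implication $(2)\Rightarrow(1)$ is the analytic core, and I would establish it variationally. Prescribing the gradient image to be $\operatorname{int}\sigma^{\vee}$ makes the equation a second boundary value problem, equivalently an optimal-transport problem from $(\R^{m},e^{\langle\beta,x\rangle}dx)$ to $(\sigma^{\vee},dy)$; restricting to the conical class renders the a priori infinite source and target masses finite on a slice. One then minimizes the associated energy functional $\mathcal{F}$ --- a Legendre-dual pairing of $f$ against the uniform measure of $\sigma^{\vee}$, corrected by a logarithmic integral term and normalized under the symmetry above --- over conical convex functions of finite energy, whose smooth critical points are precisely the conical Calabi--Yau potentials. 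The decisive step, where hypothesis (2) enters, is a coercivity lemma: $\mathcal{F}$ is proper on the normalized class if and only if $\xi$ minimizes the volume functional $V$, because the asymptotic slope of $\mathcal{F}$ along sequences degenerating towards $\partial\sigma$ (the boundary of the Reeb cone) is governed by $V$ and its Legendre transform. Here klt, i.e.\ $\beta\in\operatorname{int}\sigma^{\vee}$, makes the normalized Reeb slice compact and $V$ strictly convex on it, so a unique minimizer exists and (2) is the statement that it equals $\xi$. Granted properness, lower semicontinuity together with a compactness estimate for convex functions of bounded energy yield a finite-energy minimizer $f_{0}$, a weak (pluripotential) solution of the equation.

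It then remains to upgrade $f_{0}$ to a genuine conical Calabi--Yau metric on $Y_{reg}$ --- the ``transversal a priori estimates on $Y$''. As the right-hand side $c_{\Omega}e^{\langle\beta,x\rangle}$ is smooth and positive, Caffarelli's interior regularity theory makes $f_{0}$ smooth and strictly convex on all of $\R^{m}$; minimality forces its gradient image to be exactly $\operatorname{int}\sigma^{\vee}$ (a strictly smaller convex image would permit decreasing $\mathcal{F}$, equivalently the source and target masses would not balance), after which the Guillemin--Abreu boundary analysis for toric K\"ahler metrics yields a smooth extension of $\omega$ across the torus-invariant divisors lying in $Y_{reg}$, while the conical homogeneity together with the bounded gradient image gives the upper bound on $f_{0}$ over the transversal slice, hence the boundedness of $r^{2}$ near $y_{0}$. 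I expect this regularity/extension step to be the main obstacle: unlike the isolated-singularity case of Futaki--Ono--Wang \cite{f-o-w}, the leaf space of the Reeb foliation of $Y$ need not be a smooth orbifold, so the polytope-boundary estimates and the passage between the convex picture on $\R^{m}$ and the complex geometry on the possibly badly singular $Y_{reg}$ must be carried out with care.

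Finally, $(1)\Rightarrow(2)$ is the volume-minimization necessary condition recalled in the introduction: a conical Calabi--Yau metric forces $\mathcal{L}_{\xi}\Omega=im\Omega$ and makes the first variation of $V$ vanish at $\xi$ (\cite{m-s-y,m-s-y0}), and strict convexity of $V$ on the compact normalized Reeb slice then identifies $\xi$ as its unique minimizer. And $(2)\Leftrightarrow(3)$ is the combinatorial description of K-stability for toric affine varieties: the implication $(3)\Rightarrow(2)$ via product test configurations is recorded in \cite{c.s0}, while the converse follows by reducing K-stability to toric (torus-equivariant) test configurations --- parametrized by rational convex piecewise-linear functions on $\sigma$ --- and identifying their Donaldson--Futaki invariants with directional derivatives of the strictly convex functional $V$ at $\xi$, which are nonnegative and vanish only for trivial configurations precisely when $\xi$ minimizes $V$. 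Combined with $(2)\Rightarrow(1)$ and $(1)\Rightarrow(2)$, this closes the three-way equivalence.
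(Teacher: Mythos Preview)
Your overall architecture matches the paper's: transport to a real Monge--Amp\`ere equation on $\R^{m}$ with exponential right-hand side and prescribed conical gradient image, characterize solvability by volume minimization, and close the K-stability equivalences using the literature. Two points of genuine divergence deserve comment.

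\textbf{The existence step.} You propose to minimize an energy $\mathcal{F}$ directly over the conical class on $\R^{m}$ and to extract coercivity from the asymptotics of $V$. The paper does essentially the same thing, but makes the mechanism completely explicit by \emph{quotienting out the cone direction}: writing $f=e^{\Phi}$ with $\Phi=\phi(s)+t$ in coordinates $(s,t)$ adapted to $\xi$, the equation becomes the $(m-1)$-dimensional MA equation $\det\nabla_{s}^{2}\phi=e^{-m\phi}$ with gradient target the bounded convex body $Q_{\xi}=P_{\xi}-\beta/m$. Then \cite{be-be} gives existence iff the barycenter of $Q_{\xi}$ is the origin, and a short computation (Prop.~\ref{prop:bary}) shows this barycenter condition is equivalent to $\xi$ minimizing $V$. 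This is cleaner than your phrasing about ``asymptotic slopes along sequences degenerating towards $\partial\sigma$'', which conflates variation in $\xi$ with variation in $f$; the paper's route also immediately yields the quantitative $L^{\infty}$ bound $f\le C\exp\phi_{P_{\xi}}$ that gives local boundedness of $r^{2}$ near $y_{0}$.

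\textbf{The regularity step.} Here you and the paper part ways. You invoke Caffarelli for interior smoothness (same as the paper, via \cite{be-be}) and then propose ``Guillemin--Abreu boundary analysis'' to extend $\omega$ across the toric divisors in $Y_{reg}$. The paper does \emph{not} attempt a polytope-boundary analysis of the symplectic potential. Instead it passes to a $T$-equivariant resolution $Y'\to Y$, constructs a transversal K\"ahler form $\omega_{B}$ and a $\xi$-equivariant barrier $\Phi_{B}$ on $\mathcal{U}=p^{-1}(Y_{reg})$ via a symplectic-reduction construction (Prop.~\ref{prop:red}), and runs a \emph{transversal} version of the Aubin--Yau Laplacian estimate (adapting \cite[Appendix~B]{bbegz}) on the compact quotient $M'=(Y'-\pi^{-1}(y_{0}))/\R^{*}$, with a perturbed family of equations and a uniform $L^{\infty}$ bound (Lemma~\ref{lem:L infty estimate toric with eps}) obtained again from the convex picture. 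Your Guillemin--Abreu route may well be made to work, but it is not what the paper does, and you correctly flag it as the main obstacle; the paper's approach bypasses the delicate face-by-face boundary analysis entirely.

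\textbf{The K-stability equivalences.} Your $(3)\Rightarrow(2)$ via \cite{c.s0} matches the paper. For $(2)\Rightarrow(3)$ you propose a direct combinatorial argument identifying DF invariants of toric test configurations with directional derivatives of $V$. The paper does not do this: it proves $(2)\Leftrightarrow(1)$ analytically (Theorem~\ref{thm:toric CY iff vol min}) and then gets $(1)\Rightarrow(3)$ from \cite[Cor.~4.2]{l-w-x}. Your route implicitly requires the reduction from arbitrary to $T_{m}$-equivariant test configurations, which is precisely the content of \cite{l-w-x}, so in the end both arguments lean on the same external input; the paper is just more transparent about where it is used.
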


In fact, the Calabi-Yau metric $\omega$ can be taken to be $T_{m}-$invariant
with locally bounded $T_{m}-$invariant potential $r^{2}$ (see Theorem
\ref{thm:toric CY iff vol min}) and hence, by local results \cite[Prop 4.1]{c-g-s},
$r^{2}$ is automatically continuous on $Y.$ 

By \cite{C-S} and \cite[Remark 2.27]{l-x} the implication ``K-stability$\implies$volume
minimization'' holds for any polarized affine variety $Y$ with klt
singularities, but the converse implication, resulting from the previous
theorem, is a special feature of the toric setting (but see \cite{l1,l-l-x}
for a generalized notion of volume minimization, applying to general
affine varieties, where the role of Reeb vectors is played by general
valuations). Combining the previous theorem with \cite[Thm 7.1]{C-S}
and \cite[Thm 4.1, Cor 4.2]{l-w-x} thus yields an analytic proof
of the following purely algebro-geometric result (recall that $T(\xi)$
denotes the minimal torus determined by $\xi)$:
\begin{cor}
\label{cor:of main thm intro}Let $Y$ be an $m$-dimensional affine
toric variety which is $\Q$-Gorenstein and denote by $T_{m}$ the
corresponding torus of maximal rank $m.$ If $(Y,T_{m},\xi)$ is K-stable
then $(X,T(\xi))$ is K-stable with respect to all weak special test
configurations and Ding polystable with respect to all $\Q$-Gorenstein
test configurations.
\end{cor}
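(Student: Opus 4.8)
The plan is to derive the stated algebro-geometric stability properties from the \emph{existence} of a conical Calabi--Yau metric, which Theorem \ref{thm:main intro} puts at our disposal. So, assume that $(Y,T_{m},\xi)$ is K-stable. By Theorem \ref{thm:main intro}, in the precise form recorded as Theorem \ref{thm:toric CY iff vol min}, the vector $\xi$ is then the volume minimizer and $(Y,T_{m},\xi)$ carries a conical Calabi--Yau metric $\omega$ which is smooth on $Y_{\mathrm{reg}}$ and whose radial potential $r^{2}$ is $T_{m}$-invariant and locally bounded near the vertex point $y_{0}$; by the local regularity result \cite[Prop 4.1]{c-g-s} quoted above, $r^{2}$ is in fact continuous on all of $Y$. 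In particular $\omega$ is a genuine Ricci-flat K\"ahler cone metric on $(Y,\xi)$ satisfying $\mathcal{L}_{\xi}\Omega=im\Omega$ and $\mathcal{L}_{-J\xi}\omega=2\omega$, and it is automatically invariant under the minimal torus $T(\xi)\subseteq T_{m}$.

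Next I would feed this metric into \cite[Thm 7.1]{C-S}, which asserts that a polarized affine variety admitting a conical Calabi--Yau (equivalently, Sasaki--Einstein) metric is K-stable with respect to all \emph{weak special} test configurations. Since our $\omega$ is precisely a conical Calabi--Yau metric for the polarized affine variety $(Y,T(\xi),\xi)$, the cited theorem applies verbatim and yields the first assertion: $(Y,T(\xi))$ is K-stable against all weak special test configurations. The only point to verify here is that the cone and normalization conventions of \cite{C-S} match those of the metric delivered by the main theorem, which is immediate from the displayed conical Calabi--Yau condition \eqref{eq:def of conical cy intro} together with $\mathcal{L}_{\xi}\Omega=im\Omega$.

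For the Ding polystability statement I would instead invoke \cite[Thm 4.1, Cor 4.2]{l-w-x}, which upgrades the existence of a conical Calabi--Yau metric on $(Y,\xi)$ to Ding polystability with respect to the larger class of $\Q$-Gorenstein (not necessarily special) test configurations of $(Y,T(\xi))$. Its hypotheses are again met: $\omega$ is smooth on $Y_{\mathrm{reg}}$ with continuous, bounded potential, and $Y$ has klt singularities, being toric and $\Q$-Gorenstein as recalled just before Theorem \ref{thm:main intro}. Combining the two inputs gives the corollary, and in particular furnishes a purely \emph{analytic} route to the conclusion that does not compare the various test-configuration classes algebro-geometrically.

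The genuinely hard content is entirely internal to Theorem \ref{thm:main intro} --- the construction of the metric out of K-stability, via volume minimization and the Monge--Amp\`ere existence theory --- so for the corollary itself the main obstacle is bookkeeping: checking that the regularity of the metric produced (smoothness on $Y_{\mathrm{reg}}$, $T_{m}$-invariance, continuity and boundedness of $r^{2}$ near $y_{0}$) is exactly the input required by \cite[Thm 7.1]{C-S} and \cite[Thm 4.1, Cor 4.2]{l-w-x}, and that restricting the invariance from the maximal torus $T_{m}$ to the minimal torus $T(\xi)$ costs nothing, since a $T_{m}$-invariant object is a fortiori $T(\xi)$-invariant.
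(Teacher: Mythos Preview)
Your proposal is correct and follows essentially the same route as the paper: the corollary is stated there as an immediate consequence of Theorem~\ref{thm:main intro} combined with \cite[Thm 7.1]{C-S} and \cite[Thm 4.1, Cor 4.2]{l-w-x}, which is precisely what you do. Your additional remarks on the regularity of $r^{2}$ and on passing from $T_{m}$-invariance to $T(\xi)$-invariance are accurate bookkeeping that the paper leaves implicit.
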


The previous corollary is closely related to results in \cite{l-w-x}
comparing K-(semi-)stability with $T$-equivariant K-(semi-)stability,
shown using purely algebro-geometric techniques from the MMP.

The case of Theorem \ref{thm:main intro} when $y_{0}$ is an isolated
singularity was first shown in \cite{f-o-w,c-f-o} (using a method
of continuity, similar to the projective Fano manifold case in \cite{w-z}).
The extension to general toric affine varieties $Y$ was conjectured
in the last section of \cite{C-S} (see also \cite[page 357-358]{d-sII}
for relations to toric tangent cones). As discussed in \cite{C-S},
general toric affine varieties $Y$ are expected to appear as degenerations
of certain K-unstable non-toric affine varieties. See also \cite{l-l-x}
for a discussion about a more general conjecture concerning general
klt singularities $(Y,y_{0})$ (the Stable Degeneration Conjecture). 

As stressed in \cite{m-s-y0} toric affine varieties also play a prominent
role in the AdS/CFT correspondence (mainly for $m=3$ and $m=4)$,
since the corresponding supersymmetric gauge theories may be constructed
explicitly, using toric quivers, encoded by brane tilings and dimers
(see \cite{xiao} for an exposition aimed at mathematicians when $m=3$
and \cite{m-s2} for $m=4$). 
\begin{example}
Non-isolated toric singularities also play an important role in the
AdS/CFT correspondence. This is illustrated by the simple orbifold
case $Y_{0}=\C^{3}/\Z_{2},$ where $-1\in\Z_{2}$ acts as $(z_{1},z_{2},z_{3})\mapsto(z_{1},-z_{2},-z_{3})$
and thus $Y$ is singular over the complex line $(\C,0,0).$ The affine
variety $Y_{0}$ may be embedded as the hypersurface in $\C^{4}$
defined by $w_{2}w_{3}=w^{4}$ and corresponds to a gauge theory with
$\mathcal{N}=2$ supersymmetry. The non-isolated singularity locus
of $Y_{0}$ is ``resolved'' by the deformation of $Y$ defined by
the conifold $Y_{\epsilon},$ $\epsilon w_{1}^{2}+w_{2}w_{3}=w^{4},$
which corresponds to a gauge theory with merely $\mathcal{N}=1$ supersymmetry
(see \cite[pages 13-17]{k-w}). The AdS/CFT correspondence for general
affine (log terminal) singularities is discussed in \cite{m-p}.
\end{example}

The proof of Theorem \ref{thm:main intro} also yields a new variational
proof of the existence results in \cite{f-o-w,c-f-o}. The starting
point is the basic observation that the restriction of a toric conical
Calabi\textendash Yau metric $\omega_{Y}$ to the open orbit of $T_{\C}$
in $Y$ corresponds to the solution $f(x)$ of an inhomogeneous real
Monge\textendash Ampère equation on $\R^{m}$ with exponential right
hand side, using the standard fibration $T_{\C}\rightarrow\R^{m}$,
where $\R^{m}$ is identified with the Lie algebra of $T.$ The image
of the gradient of $f$ is prescribed to be the convex convex $\mathcal{C}^{*}$
defined by the moment polytope of the toric variety $Y.$ The existence
of the metric $\omega_{Y}$ and its regularity on the open orbit of
$T_{\C}$ in $Y$, as well as an a priori $L^{\infty}$-bound, then
follows from a general result about such Monge\textendash Ampère equations
on $\R^{m},$ which may be of independent interest (see Theorem \ref{thm:MA for convex cone}).
The latter result is obtained by applying \cite[Thm 1.1]{be-be},
concerning the second boundary value problem for the Monge\textendash Ampère
equation with an exponential non-linearity associated to a convex
body $P,$ to the $(m-1)-$dimensional linear space $\R^{m}/\R\xi$
and a compact (possible irrational) polytope obtained by intersecting
the convex cone $\mathcal{C^{*}}$ with a hyperplane determined by
$\xi.$ This leads, in particular, to a variational construction of
the conical Calabi\textendash Yau metric $\omega_{Y}.$ Finally, in
order to show that $\omega_{Y}$ is smooth on all of $Y_{reg}$ we
employ a transversal generalization of the Laplacian a priori estimates
in \cite[Appendix B]{bbegz}, by constructing an appropriate barrier
and exploiting a generalization of the $L^{\infty}$-estimate in Theorem
\ref{thm:MA for convex cone}. 

As will be shown in \cite{berm13} the toric conical Calabi-Yau metric
appearing in Theorem \ref{thm:main intro} can be constructed probabilistically
by sampling certain explicitely defined random point processes (which
are tropicalizations of the canonical point processes on Fano varieties
and Sasaki varieties introduced in \cite{berm8 comma 5} and \cite{b-c-p},
respectively). 

\subsection{Acknowledgments}

I am grateful to Mingchen Xia for discussions and many helpful comments.
Also thanks to Daniel Persson for stimulating discussions about relations
to the AdS/CFT correspondence. This work was supported by grants from
the KAW foundation, the Göran Gustafsson foundation and the Swedish
Research Council. 

\section{\label{sec:Convex-cones-and}Convex cones and real Monge\textendash Ampère
equations}

According to the classical Jörgens\textendash Calabi\textendash Pogorelov
theorem \cite{po} a smooth convex function $f$ on Euclidean space
$\R^{m}$ solves the Monge\textendash Ampère equation 
\[
\det\nabla^{2}f=g
\]
 with $g$ constant iff $f$ is quadratic. Here we will be concerned
with the case when the right hand side $g$ is of the form
\[
g(x)=Ce^{\left\langle l,x\right\rangle }
\]
 for a given \emph{non-zero} vector $l$ and some non-zero constant
$C$ (which after a scaling could as well have be taken to be equal
to one). In this case it turns out that the solution space is, in
general, infinite dimensional (if $m\geq3)$. Some extra conditions
thus need to be imposed to get a finite dimensional space of solutions.
Motivated by the setting of toric conical Calabi\textendash Yau metrics
we will impose that the gradient image 

\[
\overline{(\nabla f)(\R^{m})}=\mathcal{C}^{*},
\]
for a given \emph{proper convex cone} $\mathcal{C}^{*}$ in $\R^{m},$
i.e. $\mathcal{C}$ is a closed convex cone of dimension $m$ such
that $\mathcal{C}$ does not contain a line. Moreover, we demand that
the convex solution $f$ is strictly positive and satisfies the following
homogeneity property: there exists a vector $\xi\in\R^{m}$ such $f$
\emph{is one-homogeneous with respect to $\xi,$} i.e. for any $t\in\R$
\begin{equation}
f(x+t\xi)=e^{t}f(x).\label{eq:one homog wrt xi}
\end{equation}
Recall that a proper convex cone is a closed convex cone which does
not contain an entire line. 
\begin{example}
When $l=(1,...,1)$ and $\mathcal{C}^{*}$ is the ``non-negative
quadrant'' a solution (with $C=1)$ is obtained by setting 
\[
f(x)=\sum_{i\leq m}e^{x_{i}}
\]
(and any translation of $f$ yields a new solution). 
\end{example}

Denote by $\mathcal{C}$ the dual convex cone of the convex cone $\mathcal{C}^{*},$
i.e. the set of all $x\in\R^{m}$ such that the corresponding linear
function $\left\langle x,\cdot\right\rangle $ is non-negative on
$\mathcal{C}^{*}.$ Then $\mathcal{C}^{*}$ is a proper convex cone
iff $\mathcal{C}^{*}$ is (using that, conversely, the dual of $\mathcal{C}$
is $\mathcal{C}^{*}).$ An element $\xi$ in the interior of $\mathcal{C}$
will be called a \emph{Reeb vector}. Since the determinant of an $m\times m$
matrix is homogeneous of degree $m$ a necessary condition for the
existence of a solution $f,$ which is one-homogeneous with respect
to $\xi$, is that $\xi$ is\emph{ $l$-normalized} in the sense that
\[
\left\langle l,\xi\right\rangle =m.
\]
According to the next theorem a sufficient condition is that $\xi$
minimizes the \emph{volume} $V(\xi)$ defined as 
\begin{equation}
V(\xi):=\text{Vol \ensuremath{(\mathcal{C}^{*}\cap\left\{ \left\langle \xi,\cdot\right\rangle \leq1\right\} \in]0,\infty[} }\label{eq:def of V convex setting}
\end{equation}
computed with respect to Lebesgue measure on $\R^{m}.$ 
\begin{thm}
\label{thm:MA for convex cone}Let $\mathcal{C}^{*}$ be a proper
convex cone in $\R^{m},$ for $m\geq2,$ and $l$ a non-zero vector
in the interior of $\mathcal{C}^{*}$. Given a Reeb vector $\xi$
the following is equivalent: 
\begin{itemize}
\item There exists a smooth strictly positive convex function $f:\R^{m}\rightarrow\R$
which is one-homogeneous with respect to $\xi$ and satisfies
\[
\det\nabla^{2}f=Ce^{\left\langle l,x\right\rangle },\,\,\,\overline{(\nabla f)(\R^{m})}=\mathcal{C}^{*}
\]
for some non-zero constant $C.$
\item $\xi$ minimizes the volume $V(\xi)$ among all $l$-normalized Reeb
vectors. 
\end{itemize}
Moreover, there exists a unique such minimizer $\xi$ and the corresponding
solution $f$ is uniquely determined modulo translations, i.e. the
additive group $\R^{m}$ acts transitively on the solution space. 
\end{thm}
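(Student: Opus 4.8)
The plan is to use the $\xi$-homogeneity of $f$ to eliminate one variable, reducing the equation on $\R^{m}$ to an $(m-1)$-dimensional second boundary value problem of exactly the kind solved in \cite{be-be}, and then to match the solvability criterion furnished by \cite{be-be} with the volume-minimization condition by means of the classical first-variation formula for $V$.

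\emph{The homogeneity reduction.} Differentiating $f(x+t\xi)=e^{t}f(x)$ twice yields $\nabla^{2}f(x+t\xi)=e^{t}\nabla^{2}f(x)$, hence $\det\nabla^{2}f(x+t\xi)=e^{mt}\det\nabla^{2}f(x)$; comparing with $\det\nabla^{2}f=Ce^{\langle l,x\rangle}$ forces $C>0$ (the left side is nonnegative) and the normalization $\langle l,\xi\rangle=m$, so that normalization is indeed necessary. Fix linear coordinates $(y',y_{m})$ on $\R^{m}$ with $l=dy_{m}$ and $\xi=m\,\partial_{y_{m}}$. The homogeneity then forces $f(y',y_{m})=e^{y_{m}/m}\phi(y')$ with $\phi:=f(\cdot,0)>0$, and, writing $\psi:=\log\phi$, a Schur-complement computation of the Hessian gives
\[
\det\nabla^{2}f(y',y_{m})=e^{y_{m}}\,\frac{\phi(y')^{m}}{m^{2}}\,\det\nabla^{2}\psi(y'),
\]
the Hessian on the right being $(m-1)\times(m-1)$. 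Hence, after the rescaling $u:=m\psi+\mathrm{const}$ (legitimate because $C>0$), the equation $\det\nabla^{2}f=Ce^{\langle l,x\rangle}$ becomes $\det\nabla^{2}u=e^{-u}$ on $\R^{m-1}$, and $f$ is convex, resp.\ smooth, iff $u$ is. Since $\nabla f$ is again $\xi$-homogeneous, $\overline{(\nabla f)(\R^{m})}$ is the closed cone generated by $\overline{(\nabla f)(\{y_{m}=0\})}$, and a short computation identifies the condition $\overline{(\nabla f)(\R^{m})}=\mathcal{C}^{*}$ with $\overline{(\nabla u)(\R^{m-1})}=P_{\xi}$, where $P_{\xi}:=\mathcal{C}^{*}\cap\{\langle\xi,\cdot\rangle=m\}$ is regarded as a convex body in $\R^{m}/\R\xi$ with base point the given vector $l$. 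Here $P_{\xi}$ is compact and full-dimensional because $\xi$ is interior to $\mathcal{C}$, and its base point $l$ is in its relative interior because $l$ is interior to $\mathcal{C}^{*}$.

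\emph{Applying \cite{be-be}.} By \cite[Thm 1.1]{be-be} the second boundary value problem $\det\nabla^{2}u=e^{-u}$, $\overline{(\nabla u)(\R^{m-1})}=P_{\xi}$, has a smooth convex solution, unique modulo translations of $\R^{m-1}$, if and only if the barycenter of $P_{\xi}$ is its base point. (Necessity is in any case elementary: the Monge--Amp\`ere change of variables gives $\int_{P_{\xi}}p\,dp=\int_{\R^{m-1}}(\nabla u)\det\nabla^{2}u\,dx$, which by the equation equals $\int_{\R^{m-1}}(\nabla u)e^{-u}\,dx=-\int_{\R^{m-1}}\nabla(e^{-u})\,dx=0$, the boundary term vanishing because $l$ interior to $P_{\xi}$ forces exponential decay of $e^{-u}$ — so the real analytic content is the sufficiency in \cite{be-be}.) Tracing this back through the reduction: an $f$ as in the theorem exists if and only if the barycenter of $\mathcal{C}^{*}\cap\{\langle\xi,\cdot\rangle=m\}$ equals $l$.

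\emph{Matching with volume minimization, uniqueness, and the main difficulty.} The identity $V(\xi)=\tfrac{1}{m!}\int_{\mathcal{C}^{*}}e^{-\langle\xi,p\rangle}\,dp$ exhibits $V$ as a strictly (even log-) convex function on the admissible set $\{\langle l,\xi\rangle=m\}\cap\operatorname{int}\mathcal{C}$, which is bounded (because $l$ is interior to $\mathcal{C}^{*}$) and on whose boundary $V$ blows up; hence $V$ has there a unique minimizer, which is its unique critical point. Differentiating the indicator $\mathbf{1}_{\{\langle\xi,p\rangle\le 1\}}$ under the integral sign in $V(\xi)=\operatorname{Vol}(\mathcal{C}^{*}\cap\{\langle\xi,\cdot\rangle\le1\})$ gives $\tfrac{d}{dt}\big|_{t=0}V(\xi+t\eta)=-\int_{\mathcal{C}^{*}\cap\{\langle\xi,\cdot\rangle=1\}}\langle\eta,p\rangle\,d\lambda_{\xi}(p)$ for the natural measure $\lambda_{\xi}$ on that hyperplane (a constant multiple of Lebesgue measure there), so $\xi$ is critical subject to $\langle l,\eta\rangle=0$ iff the barycenter of $\mathcal{C}^{*}\cap\{\langle\xi,\cdot\rangle=1\}$ — equivalently, after the dilation relating the level-$1$ and level-$m$ slices and since a barycenter lies in its slice with $\langle\xi,l\rangle=m$, the barycenter of $P_{\xi}$ — equals $l$. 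Combining the three steps gives the asserted equivalence together with uniqueness of $\xi$; and uniqueness of $f$ modulo translations follows because \cite{be-be} gives $u$ unique up to translations of $\R^{m-1}$ (which lift to translations of $f$ by vectors of $\ker l$), while the leftover additive constant in $u$ — equivalently the value of $C$ — corresponds to rescaling $f$, i.e.\ to translating it along $\xi$, the two together giving the transitive $\R^{m}$-action. The serious analysis (existence and interior regularity for the exponential second boundary value problem) is entirely imported from \cite{be-be}, so the work here is organizational: keeping the reduction and its dimensional constants straight, and — the point most easily gotten wrong — recognizing that the base point of $\mathcal{C}^{*}\cap\{\langle\xi,\cdot\rangle=m\}$ must be taken to be $l$, since a wrong choice would inject a spurious linear term $\langle b,\nabla u\rangle$ into the reduced equation, which is exactly the soliton-type obstruction that survives precisely when $\xi$ fails to minimize $V$.
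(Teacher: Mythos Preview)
Your proposal is correct and follows essentially the same route as the paper: reduce via $\xi$-homogeneity to an $(m-1)$-dimensional second boundary value problem, invoke \cite[Thm~1.1]{be-be}, and identify the barycenter condition with volume minimization via the Laplace-transform representation of $V$. The only differences are cosmetic---you compute $\det\nabla^{2}f$ directly by a Schur complement while the paper first writes $f=e^{\Phi}$ and then passes to $\phi=\Phi-l/m$; and you take the cross-section $P_{\xi}$ at level $m$ with base point $l$, whereas the paper takes it at level $1$ with barycenter $l/m$ and then shifts to $Q_{\xi}=P_{\xi}-l/m\subset\xi^{\perp}$---but the logical skeleton and the appeal to \cite{be-be} are identical.
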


In fact, the proof yields the following property of $f$ which is
stronger than the gradient property above: there exists a constant
$C$ such that 
\begin{equation}
f(x)\leq Ce^{\phi_{P_{\xi}}(x)},\label{eq:bound on f}
\end{equation}
 where $\phi_{P}(x)$ denotes the support function of a given convex
set $P\Subset\R^{n},$ i.e. 
\begin{equation}
\phi_{P}(x)=\sup_{p\in P}\left\langle x,p\right\rangle .\label{eq:def of support f}
\end{equation}
and $P_{\xi}$ is the convex bounded set defined by
\begin{equation}
P_{\xi}:=\mathcal{C}^{*}\cap\left\{ \left\langle \xi,\cdot\right\rangle =1\right\} .\label{eq:def of P xi}
\end{equation}
Note that it follows directly from the definition of $P_{\xi},$ that
$e^{\phi_{P_{\xi}}(x)}$ is one-homogeneous with respect to $\xi.$

\subsection{Proof of Theorem \ref{thm:MA for convex cone}}

It will be convenient to reformulate the theorem in a linearly invariant
manner. Thus starting with the real vector space $W:=\R^{m}$ we let
$\mathcal{C}^{*}$ be a proper convex cone in the space $W^{*}$of
linear functions on $W.$ We identify $l$ with a non-zero element
in $W^{*}$ and $\mathcal{C}$ with a cone in $W.$ In particular,
a Reeb vector $\xi$ defines an element in $W.$ A function $f:W\rightarrow\R$
as in Theorem \ref{thm:MA for convex cone} then satisfies 
\begin{equation}
MA(f)=Ce^{l}dx,\,\,\,\,\overline{(df)(W)}=\mathcal{C}^{*},\label{eq:MA eq for f in pf}
\end{equation}
 where $dx$ denotes a Lebesgue measure on $W$ and $MA(f)$ is the
real Monge-{}-Ampère measure,
\begin{equation}
MA(f):=\det(\nabla_{x}^{2}f)dx\label{eq:def of MA f}
\end{equation}
(the equation is independent of the choice of linear coordinates $x$
on $W,$ up to rescaling the constant $C).$ Assume that $\xi$ is
$l$-normalized. Writing $f=e^{\Phi}$ the homogeneity property \ref{eq:one homog wrt xi}
for $f$ holds iff the function $\Phi$ is $\xi$-equivariant, i.e.
$\Phi(x+t\xi)=\Phi(x)+t$ for any $t\in\R$. Since $d(e^{\Phi})=e^{\Phi}d\Phi$
this equivalently means that 
\begin{equation}
\overline{(d\Phi)(W)}=P_{\xi},\label{eq:gradient cond for Phi}
\end{equation}
 where $P_{\xi}$ is defined in formula \ref{eq:def of P xi}. Next,
setting

\[
\phi:=\Phi-l/m,
\]
 gives a $\xi$-invariant function, i.e $\phi(x+t\xi)=\phi(x)$. Equivalently,
one may regard $\phi$ as a function defined on $W/\R\xi$. The condition
in formula \ref{eq:gradient cond for Phi} is equivalent to 
\begin{equation}
\overline{(d\phi)(W)}=P_{\xi}-l/m.\label{eq:gradient cond for phi}
\end{equation}
Now consider the linear function 
\[
t:=l/m\in W^{*}
\]
and take $m-1$ linearly independent linear functions $s_{1},...,s_{m-1}$
on $W$ such that $s_{i}$ are $\xi$-invariant, i.e. $\left\langle s_{i},\xi\right\rangle =0.$
We then obtain an invertible linear map 
\begin{equation}
W\rightarrow\R^{m},\,\,\,x\mapsto(s_{1}(x),\ldots,s_{m-1}(x),t(x))\label{eq:x maps to}
\end{equation}
The Monge-{}-Ampère equation \ref{eq:MA eq for f in pf} for $e^{\Phi}$
can thus be expressed as 
\[
\left(\det\nabla_{s}^{2}\right)\left(e^{t+\phi(s)}\right)=ae^{mt},
\]
 for some non-zero constant $a$ (coming from the Jacobian of the
linear map \ref{eq:x maps to}). By a direct computation this equivalently
means that on $W/\R\xi\cong\R^{m-1}$ with coordinates $s$ we have
(after perhaps rescaling $s$)
\begin{equation}
\left(\det\nabla_{s}^{2}\right)\left(\phi(s)\right)=e^{-m\phi(s)}\label{eq:MA eq for phi of X}
\end{equation}
 This means that a $\xi$-homogeneous smooth convex function $e^{\Phi}$
solves the equation \ref{eq:MA eq for f in pf} subject to \ref{eq:MA eq for phi of X}
iff the smooth convex function $\phi$ on $W/\R\xi$ solves the equation
\ref{eq:MA eq for phi of X} subject to 
\begin{equation}
\overline{(\nabla\phi)(W)}=Q_{\xi}\subset\xi^{\perp},\label{eq:def of Q}
\end{equation}
 where $\xi^{\perp}\subset W^{*}$ is the subet of linear functions
on $W$ that vanishes on $\xi$, $Q_{\xi}$ is the convex body defined
as the $P_{\xi}-l/m$ regarded as a subset of $\xi^{\perp}$. By \cite[Thm 1.1]{be-be}
there exists a solution $\phi$ iff the barycenter of $Q_{\xi}$ is
equal to the origin in $\xi^{\perp}.$ Equivalently, this condition
means that the barycenter of $P_{\xi}$ is equal to $l/m.$ Moreover,
any other solution $\phi$ is of the form 
\[
\phi(s+a)+c
\]
 for some $a\in\xi^{\perp}$ and $c\in\R.$ Furthermore, by the general
$L^{\infty}$-estimate in \cite[Thm 1.1]{be-be}, detailed in the
following lemma, a solution $\phi$ satisfies the following global
bound 
\[
\sup_{W/\R\xi}|\phi-\phi_{0}|\leq C,\,\,\,\phi_{0}(x):=\sup_{p\in Q_{\xi}}\left\langle x,p\right\rangle 
\]
for some constant $C,$ using that 
\begin{equation}
\mu:=e^{-m\phi(s)}ds\leq Ae^{-|s|/A}ds\label{eq:exp decay}
\end{equation}
 for some constant $A,$ since $0$ is an interior point of $(\nabla\phi)(W/\R\xi).$ 
\begin{lem}
\label{lem:L infty estimate in convex settting}Let $\phi$ be a convex
function on $\R^{n}$ such that 
\[
MA(\phi)=\mu,\,\,\,\overline{(\partial\phi)(\R^{n})}=P
\]
 for a convex body $P\subset\R^{n}$ and a measure $\mu$ on $\R^{n},$
where $MA(\phi)$ denotes the Monge\textendash Ampère measure of $\phi$
(defined by formula \ref{eq:def of MA f} when $\phi$ has a bounded
Hessian). Set 
\[
\phi_{P}(x):=\sup_{p\in P}\left\langle x,p\right\rangle 
\]
and assume that $\phi$ is normalized so that $\sup_{\R^{n}}(\phi-\phi_{P})=0.$
Given $q>n$ the following inequality holds
\[
\sup_{\R^{n}}|\phi-\phi_{P}|\leq\frac{d(P)}{V(P)}\int_{\R^{n}}|x|\mu+C_{n,q}\frac{d(P)^{\left(1+n(1-1/q)\right)}}{V(P)}\left(\int_{\R^{n}}|x|^{q}\mu\right)^{1/q},
\]
 where $d(P)$ denotes the diameter of $P,$ $V(P)$ its volume and
the constant $C_{n,q}$ only depends on $n$ and $q.$ 
\end{lem}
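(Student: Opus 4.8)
The plan is to localize the difference $u := \phi_P - \phi$, which is non-negative by the normalization $\sup(\phi-\phi_P)=0$, and to bound it pointwise using the Alexandrov–Bakelman–Pucci (ABP) maximum principle applied on a large ball, taking advantage of the fact that the gradient images of $\phi$ and of $\phi_P$ both equal $P$. First I would record the basic geometry: since $\overline{(\partial\phi)(\R^n)} = P = \overline{(\partial\phi_P)(\R^n)}$ and $\phi_P$ is the support function of $P$, the convex function $u=\phi_P-\phi$ grows sublinearly, in fact $u$ is bounded; moreover one has the Legendre-transform identity $\phi_P = (\iota_P)^*$ where $\iota_P$ is the convex indicator of $P$, so the sup-convolution structure of $\phi_P$ gives good control on its Hessian away from the ``corners'' of $P$. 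The point where $u$ attains its maximum, call it $x_0$, satisfies $\nabla\phi(x_0) = \nabla\phi_P(x_0) =: p_0 \in \partial P$, and near $x_0$ the function $\phi$ lies below the affine function $\ell(x) = \phi(x_0) + \langle p_0, x-x_0\rangle$.

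The key step is a rescaled ABP/contact-set estimate. Fix a large radius $R$ and consider the convex envelope $\Gamma$ of $(\phi - \ell)^-$ — equivalently the lower contact set of $\phi$ with its supporting planes having slopes in a small ball $B_\rho(p_0)$. On one hand, the measure of the image of the contact set under $\nabla\phi$ is at most $\mathrm{Vol}(B_\rho(p_0)\cap P) \le C_n \rho^n$ when $p_0$ is a smooth boundary point, but more robustly $\le \omega_n\rho^n$ always; on the other hand, the same image has measure $\ge \mathrm{Vol}$ of a simplex determined by how far $\phi$ dips below $\ell$, i.e. comparable to $(u(x_0)/R)^n R^n / \ldots$. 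Matching the two, after integrating the defect of $\phi$ below its tangent plane against $MA(\phi)=\mu$ over the contact region and using $\int |x|^q\,d\mu < \infty$ to absorb the tail outside $B_R$ via Hölder, one gets an inequality of the shape $u(x_0) \le c_1 R^{-1}\int|x|\,d\mu + c_2 R^{n(1-1/q)}(\int|x|^q d\mu)^{1/q}$ with constants depending on the geometry of $P$ only through $d(P)$ and $V(P)$; the factors $d(P)$ and $1/V(P)$ enter precisely because rescaling $P$ to unit size rescales both the slope ball and the Lebesgue normalization. Optimizing over $R$ — or rather, the natural scale is $R \sim d(P)$ since slopes live in $P$ which has diameter $d(P)$ — produces the stated two-term bound with the exponent $1 + n(1-1/q)$ on $d(P)$ in the second term. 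I would then note $|\phi - \phi_P| = u$ so the sup of $u$ is exactly the quantity estimated.

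The main obstacle I anticipate is making the volume comparison on the gradient side quantitative and uniform in $P$: one needs that the portion of $\partial P$ seen by slopes close to $p_0$ is not too degenerate, so that a small ball of slopes around $p_0$ contributes volume comparable to $\rho^n$ rather than something much smaller (the issue of flat faces / low-dimensional contact). This is handled by working with the contact set of $\phi$ directly — the ABP argument only requires the inclusion $\nabla\phi(\text{contact set}) \subset P$ together with the lower bound coming from the geometry of the contact region, and one does \emph{not} need a reverse inclusion — so degeneracy of $\partial P$ is harmless, and the only genuinely quantitative inputs are $d(P)$, $V(P)$, and the moments $\int|x|\,d\mu$, $\int|x|^q\,d\mu$. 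Since the displayed inequality is quoted from \cite[Thm 1.1]{be-be}, I would in fact simply invoke that theorem; the sketch above indicates the mechanism and shows why the stated dependence on $d(P)$ and $V(P)$ is the natural one, with the Hölder exponent $q>n$ needed exactly to make the tail integral $\int_{|x|>R}|x|\,d\mu \le R^{-(q-1)}\int|x|^q d\mu$ summable against the ABP power of $R$.
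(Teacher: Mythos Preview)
Your approach is genuinely different from the paper's, and as written it does not close. The paper does \emph{not} work on $\R^n$ at all: it passes to the Legendre transform $v:=\phi^*$, which is a convex function on the bounded convex body $P$, and then applies the Sobolev--Morrey embedding $W^{1,q}(P)\hookrightarrow L^\infty(P)$ (valid for $q>n$ on a bounded convex domain, with the explicit constant involving $d(P)^{1+n(1-1/q)}/V(P)$). Two elementary identities then finish the job: $\sup_{\R^n}|\phi-\phi_P|=\sup_P|v|$ and $\int_P|\nabla v|^\alpha\,dy=\int_{\R^n}|x|^\alpha\,MA(\phi)$ (since $\nabla v$ and $\nabla\phi$ are inverse maps), together with the convexity bound $|v(y)|\le d(P)|\nabla v(y)|$ coming from the normalization $\inf_P v=0$. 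This is a four-line argument once one sees the right change of variables, and the constants $d(P)$, $V(P)$ and the exponent $1+n(1-1/q)$ fall out of the Sobolev inequality on $P$ with no optimization.

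Your ABP/contact-set sketch, by contrast, stays on the $\R^n$ side and tries to compare gradient-image volumes. The gap is that the inequality you arrive at, $u(x_0)\le c_1 R^{-1}\int|x|\,d\mu + c_2 R^{n(1-1/q)}(\int|x|^q\,d\mu)^{1/q}$, does not yield the stated bound for any choice of $R$: plugging $R\sim d(P)$ gives $d(P)^{-1}$ in front of the first moment (the lemma has $d(P)/V(P)$) and $d(P)^{n(1-1/q)}$ in front of the second (the lemma has $d(P)^{1+n(1-1/q)}/V(P)$). The factor $1/V(P)$ has no natural home in an ABP argument on $\R^n$ --- it is intrinsic to Sobolev on the dual domain $P$ --- and your appeal to ``rescaling $P$ to unit size'' does not recover it, since such a rescaling also rescales $\mu$ and the moments. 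The fallback of simply citing \cite[Thm~1.1]{be-be} is also not quite right: that theorem gives existence and an $L^\infty$ bound under a barycenter hypothesis, whereas the present lemma is a quantitative estimate valid for \emph{any} $\phi$ with the stated gradient image; the paper extracts it from the \emph{proof} of \cite[Prop.~2.2]{be-be}, not from the theorem statement. I would recommend abandoning the ABP route and writing out the Legendre-dual argument.
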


\begin{proof}
Following the argument in the proof of \cite[Prop 2.2]{be-be} we
denote by $v:=\phi^{*}$ is the Legendre transform of $\phi,$ which
defines a convex function on the interior of $P.$ By the Sobolev
inequality for the embedding $W^{1,q}(P)\Subset L^{\infty}(P)$ we
have, since the interior of $P$ is a bounded convex domain,
\[
\sup_{P}|v|\leq\frac{1}{V(P)}\int_{P}|v(y)|dy+C_{n,q}\frac{d(P)^{\left(1+n(1-1/q)\right)}}{V(P)}\left(\int_{P}|\nabla v(y)|^{q}dy\right)^{1/q}
\]
(see \cite[Lemma 1.7.3]{da} or \cite[Thm 4.4]{m-t-s-o}). In general,
as explained in \cite[Prop 2.2]{be-be}, $\inf_{P}v=-\sup(\phi-\phi_{P})$
and hence, by assumption, $\inf_{P}v=0.$ Assume that the infimum
is attained at $y_{0}\in P,$ i.e. $v(y_{0})=0.$ By convexity $|v(y)|=v(y)-v(y_{0})\leq\nabla v(y)\cdot(y-y_{0}).$
Thus the Cauchy-Schwartz inequality yields 
\[
\text{\ensuremath{\int_{P}|v(y)|dy\leq d(P)}}\int_{P}|\nabla v|dy.
\]
Finally, the proof is concluded by observing that $\int|\nabla v(y)|^{\alpha}dy=\int|x|^{\alpha}MA(\phi)$
for any $\alpha>0$ and $\sup_{P}|v|=\sup_{\R^{n}}|\phi-\phi_{P}|.$
\end{proof}
The proof of Theorem \ref{thm:MA for convex cone} is now concluded
by invoking the following 
\begin{prop}
\label{prop:bary}A Reeb vector $\xi$ minimizes the volume among
all $l$-normalized Reeb vectors iff the barycenter $b_{P_{\xi}}$
of $P_{\xi},$ i.e. the element of $P_{\xi}$ defined by
\[
b_{P_{\xi}}:=\int_{p\in P_{\xi}}pd\lambda_{m-1}/\int_{p\in P_{\xi}}d\lambda_{m-1},
\]
 (where $d\lambda_{m-1}$ denotes any choice of Lebesgue measure on
the $(m-1)$-dimensional affine space $\left\langle \xi,\cdot\right\rangle =1)$
is given by
\[
b_{P_{\xi}}=l/m
\]
Moreover, there always exists a unique minimizer $\xi.$ 
\end{prop}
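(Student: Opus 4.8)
The plan is to reduce the statement to a clean variational identity for the volume functional $V(\xi)$. First I would fix a linear coordinate on which $l$ evaluates to a constant, say work on the affine slice $H_{c}=\{\langle l,\cdot\rangle=c\}$ and relate $V(\xi)=\mathrm{Vol}(\mathcal{C}^{*}\cap\{\langle\xi,\cdot\rangle\le 1\})$ to an integral over $P_{\xi}=\mathcal{C}^{*}\cap\{\langle\xi,\cdot\rangle=1\}$. The cone $\mathcal{C}^{*}\cap\{\langle\xi,\cdot\rangle\le 1\}$ is the union over $s\in[0,1]$ of the slices $\{\langle\xi,\cdot\rangle=s\}$, each of which is $s\,P_{\xi}$ up to the obvious identification; integrating the $(m-1)$-dimensional Lebesgue measure of $sP_{\xi}$ against $ds$ gives
\[
V(\xi)=\frac{1}{m}\int_{P_{\xi}}d\lambda_{m-1},
\]
i.e. $V(\xi)$ is, up to the constant $1/m$, the $(m-1)$-volume of the slice $P_{\xi}$. (One has to be slightly careful about the normalization of the transverse measure $d\lambda_{m-1}$ relative to the ambient Lebesgue measure; since $l$ is fixed and $\langle l,\xi\rangle=m$ is the normalization constraint, this is a harmless rescaling that I would pin down once and for all.)

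Next I would compute the derivative of $V$ along the affine constraint set $\{\xi:\langle l,\xi\rangle=m,\ \xi\in\mathrm{int}\,\mathcal{C}\}$. The natural thing is to differentiate under the integral sign: for a tangent vector $\eta$ with $\langle l,\eta\rangle=0$, perturbing $\xi$ to $\xi+\varepsilon\eta$ moves the slicing hyperplane $\{\langle\xi,\cdot\rangle=1\}$, and the first variation of $\mathrm{Vol}(\mathcal{C}^{*}\cap\{\langle\xi,\cdot\rangle\le 1\})$ is an integral over the moving face. A direct computation — this is the standard Martelli--Sparks--Yau computation, carried out here in the convex-geometric language — gives
\[
\frac{d}{d\varepsilon}\Big|_{\varepsilon=0}V(\xi+\varepsilon\eta)
=-\,\frac{1}{m}\int_{P_{\xi}}\langle\eta,p\rangle\,d\lambda_{m-1}
=-\,\frac{1}{m}\,\mathrm{Vol}(P_{\xi})\,\langle\eta,b_{P_{\xi}}\rangle,
\]
where $b_{P_{\xi}}$ is the barycenter. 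Hence $\xi$ is a critical point of $V$ on the constraint set iff $\langle\eta,b_{P_{\xi}}\rangle=0$ for all $\eta$ with $\langle l,\eta\rangle=0$, which is exactly the statement that $b_{P_{\xi}}$ is proportional to $l$; and the proportionality constant is fixed by $\langle\xi,b_{P_{\xi}}\rangle=1$ (since $b_{P_{\xi}}\in P_{\xi}$) together with $\langle\xi,l\rangle=m$, forcing $b_{P_{\xi}}=l/m$. This settles the equivalence "$\xi$ minimizes $\Leftrightarrow$ critical point $\Leftrightarrow$ $b_{P_{\xi}}=l/m$" modulo convexity.

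To upgrade "critical point" to "minimizer" and to get existence and uniqueness of the minimizer, I would establish convexity of $V$ along the constraint set — in fact strict convexity — together with properness. Properness is the main thing to watch: as $\xi$ approaches the boundary of the Reeb cone $\mathrm{int}\,\mathcal{C}$, the slice $P_{\xi}=\mathcal{C}^{*}\cap\{\langle\xi,\cdot\rangle=1\}$ becomes unbounded (the hyperplane $\{\langle\xi,\cdot\rangle=1\}$ ceases to be transverse to $\mathcal{C}^{*}$, because $\mathcal{C}$ and $\mathcal{C}^{*}$ are dual proper cones), so $\mathrm{Vol}(P_{\xi})\to\infty$ and hence $V(\xi)\to\infty$; I expect this to be the step requiring the most care, since one must quantify the blow-up using properness of the cone $\mathcal{C}^{*}$. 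For convexity, the cleanest route is to write $V(\xi)=\frac{1}{m}\int_{\mathcal{C}^{*}}\mathbf{1}_{\{\langle\xi,p\rangle\le 1\}}\,dp$ — no, rather to use the homogeneity to write $V(\xi)$ as an integral transform of the cone: $V(\xi)=c_m\int_{\mathcal{C}^{*}}e^{-\langle\xi,p\rangle}\,dp$ after a change of variables, and then $\xi\mapsto e^{-\langle\xi,p\rangle}$ is convex for each fixed $p$, so $V$ is convex on all of $\mathrm{int}\,\mathcal{C}$, and strictly convex on affine slices because $\mathcal{C}^{*}$ is $m$-dimensional (the functions $p\mapsto\langle\eta,p\rangle$ are non-constant on $\mathcal{C}^{*}$ for $\eta\neq 0$). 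A convex, proper function on the relatively open convex set $\{\langle l,\xi\rangle=m\}\cap\mathrm{int}\,\mathcal{C}$ attains its infimum at a unique interior point, which is therefore a critical point, and the computation above identifies it by $b_{P_{\xi}}=l/m$. This completes the proof, and the final step — feeding $b_{P_{\xi}}=l/m$ back into the barycenter condition of \cite[Thm 1.1]{be-be} as used just before the proposition — closes the proof of Theorem \ref{thm:MA for convex cone}.
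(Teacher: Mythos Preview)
Your proposal is correct and follows essentially the same route as the paper. The paper packages the derivative computation and strict convexity together via the Laplace transform identity $V(\xi)=m!\int_{\mathcal{C}^{*}}e^{-\langle\xi,p\rangle}\,dp$ (its Lemma~\ref{lem:The function log V }), differentiating $\log V$ to obtain $d\log V|_{\xi}=-m\,b_{P_{\xi}}$ directly, whereas you first compute $dV$ geometrically and only later invoke the exponential integral for convexity; but these are the same ideas in a slightly different order. Your concern that properness is ``the step requiring the most care'' is overstated: the paper dispatches it in two lines by observing that for $\xi_{0}\in\partial\mathcal{C}$ there exist $p_{0},p_{1}$ in the interior of $\mathcal{C}^{*}$ with $\langle\xi_{0},p_{0}\rangle=0$ and $\langle\xi_{0},p_{1}\rangle=1$, so $P_{\xi_{0}}$ contains the ray $p_{1}+\mathbb{R}_{\ge0}p_{0}$ and is unbounded.
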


To prove the proposition first observe that $V(\xi)\rightarrow\infty$
as $\xi$ approaches a non-zero point $\xi_{0}$ in the boundary of
$\mathcal{C}.$ Indeed, there exist non-zero elements $p_{0},p_{1}$
in the interior of $\mathcal{C}^{*}$ such that $\left\langle \xi_{0},p_{1}\right\rangle =1$
and $\left\langle \xi_{0},p_{0}\right\rangle =0.$ As a consequence,
the corresponding convex set $P_{\xi_{0}}$ is unbounded (since it
contains $p_{1}+cp_{0}$ for any $c>0)$ and hence $V(\xi_{0})=\infty.$
Thus, the restriction of $V$ to the convex bounded set $\mathcal{C^{*}}\cap\left\{ \left\langle l,\cdot\right\rangle =m\right\} $
admits a minimizer $\xi_{*}$ in the interior, i.e. a minimizing Reeb
vector field. 
\begin{lem}
\label{lem:The function log V }The function $\log V(\xi)$ is smooth
and strictly convex on the interior of $\mathcal{C}$ and its differential
is given by
\[
d\log V_{|\xi}=-mb_{P_{\xi}}
\]
\end{lem}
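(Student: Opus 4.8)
The plan is to compute the volume $V(\xi)$ explicitly as an integral over the cone $\mathcal{C}^*$, differentiate under the integral sign, and read off both the formula for $d\log V$ and the strict convexity. First I would rewrite
\[
V(\xi)=\operatorname{Vol}\bigl(\mathcal{C}^*\cap\{\langle\xi,\cdot\rangle\le 1\}\bigr)=\frac{1}{m}\int_{\mathcal{C}^*}e^{-\langle\xi,p\rangle}\,dp,
\]
where the equality is obtained by the change of variables $p\mapsto sp$ with $s=\langle\xi,p\rangle$ along rays of the cone: integrating $\int_0^{1/\langle\xi,p\rangle}s^{m-1}\,ds$ in polar-type coordinates adapted to $\xi$ produces $\frac{1}{m}\langle\xi,p\rangle^{-m}$, and the substitution turning $\langle\xi,p\rangle^{-m}$ into $\int_0^\infty t^{m-1}e^{-t\langle\xi,p\rangle}\,dt/(m-1)!$ (Laplace transform of a ray) gives the clean Laplace-integral form above, up to an overall positive constant absorbed into the choice of $dp$. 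The integral $\int_{\mathcal{C}^*}e^{-\langle\xi,p\rangle}\,dp$ converges precisely when $\xi$ lies in the interior of the dual cone $\mathcal{C}=(\mathcal{C}^*)^*$, which is exactly the Reeb condition; and as $\xi\to\partial\mathcal{C}$ the integral blows up, recovering the boundary behavior already noted before the lemma.

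Granting this representation, smoothness of $V$ on $\operatorname{int}\mathcal{C}$ is immediate since one may differentiate under the integral sign (the exponential decay gives locally uniform domination of all derivatives). Differentiating once,
\[
dV_{|\xi}=-\frac{1}{m}\int_{\mathcal{C}^*}p\,e^{-\langle\xi,p\rangle}\,dp,
\]
and differentiating twice gives the Hessian $\nabla^2 V_{|\xi}=\frac{1}{m}\int_{\mathcal{C}^*}p\otimes p\,e^{-\langle\xi,p\rangle}\,dp$, which is positive definite because $e^{-\langle\xi,\cdot\rangle}\,dp$ is a positive measure not supported on any hyperplane (as $\mathcal{C}^*$ is $m$-dimensional). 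Hence $V$ itself is strictly convex; for $\log V$ one uses the standard fact that the logarithm of a Laplace transform of a positive measure is convex — indeed $\nabla^2\log V=\frac{\nabla^2 V}{V}-\frac{dV\otimes dV}{V^2}$ is, up to the positive factor $1/V$, the covariance matrix of $p$ under the probability measure $V^{-1}e^{-\langle\xi,\cdot\rangle}\,dp$ (after the $1/m$ normalizations cancel), and this covariance is strictly positive definite for the same non-degeneracy reason. So $\log V$ is smooth and strictly convex on $\operatorname{int}\mathcal{C}$.

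It remains to identify $d\log V$. From the formula for $dV$ and the probabilistic reading of $V$,
\[
d\log V_{|\xi}=\frac{dV_{|\xi}}{V(\xi)}=-\,\frac{\int_{\mathcal{C}^*}p\,e^{-\langle\xi,p\rangle}\,dp}{\int_{\mathcal{C}^*}e^{-\langle\xi,p\rangle}\,dp},
\]
so I must show the right-hand side equals $-m\,b_{P_\xi}$. This is again a slicing computation: decompose $dp$ on $\mathcal{C}^*$ into the level sets $\{\langle\xi,\cdot\rangle=s\}$, each of which is the dilate $s\cdot P_\xi$ of the slice $P_\xi=\mathcal{C}^*\cap\{\langle\xi,\cdot\rangle=1\}$. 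On the slice at level $s$ the induced measure is $s^{m-1}\,d\lambda_{m-1}$ (scaled appropriately) and the point $p$ equals $s\,q$ with $q\in P_\xi$; thus $\int_{\mathcal{C}^*}p\,e^{-\langle\xi,p\rangle}\,dp$ becomes $\bigl(\int_0^\infty s^{m}e^{-s}\,ds\bigr)\bigl(\int_{P_\xi}q\,d\lambda_{m-1}\bigr)$ while $\int_{\mathcal{C}^*}e^{-\langle\xi,p\rangle}\,dp$ becomes $\bigl(\int_0^\infty s^{m-1}e^{-s}\,ds\bigr)\bigl(\int_{P_\xi}d\lambda_{m-1}\bigr)$. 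The ratio of the $s$-integrals is $\Gamma(m+1)/\Gamma(m)=m$, and the ratio of the slice integrals is exactly the barycenter $b_{P_\xi}$ by its definition; hence $d\log V_{|\xi}=-m\,b_{P_\xi}$, as claimed.

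**Main obstacle.** The genuinely delicate point is justifying the Laplace-integral representation of $V$ and the slicing/Fubini manipulations with full rigor on an arbitrary proper convex cone: one must check convergence of the relevant integrals exactly on $\operatorname{int}\mathcal{C}$, handle the (possibly non-smooth) boundary $\partial\mathcal{C}^*$ when parametrizing by rays, and verify that the disintegration of Lebesgue measure along the affine slices $\{\langle\xi,\cdot\rangle=s\}$ carries the stated $s^{m-1}$ scaling with the same normalization used in the definition of $b_{P_\xi}$. All of this is standard convex-geometry bookkeeping, but it is where the care is needed; once the representation $V(\xi)=c_m\int_{\mathcal{C}^*}e^{-\langle\xi,p\rangle}\,dp$ is in hand, smoothness, strict convexity, and the differential formula all follow by differentiating under the integral and the elementary Gamma-function identity above.
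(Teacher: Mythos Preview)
Your approach is essentially identical to the paper's: both rewrite $V(\xi)$ as a constant multiple of the Laplace-type integral $\int_{\mathcal{C}^*}e^{-\langle\xi,p\rangle}\,dp$, invoke the standard fact that the log of such an integral is smooth and strictly convex (you spell out the covariance-matrix argument, the paper just cites ``standard convex analysis''), and then compute the differential by slicing along $\{\langle\xi,\cdot\rangle=s\}$ and using $\Gamma(m+1)/\Gamma(m)=m$. The multiplicative constant in your Laplace representation is off (the correct relation is $V(\xi)=\tfrac{1}{m!}\int_{\mathcal{C}^*}e^{-\langle\xi,p\rangle}\,dp$, not $\tfrac{1}{m}$), but since only $d\log V$ is at stake this is harmless.
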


\begin{proof}
First observe that
\begin{equation}
V(\xi)=m!\int_{\mathcal{C}^{*}}e^{-\left\langle \xi,p\right\rangle }dp.\label{eq:V as Laplac transf}
\end{equation}
 Indeed, setting $s:=\left\langle \xi,p\right\rangle $ we have 
\[
\int_{\mathcal{C}^{*}}e^{-\left\langle \xi,p\right\rangle }dp=\int e^{-s}s_{*}(1_{\mathcal{C}^{*}}dp)=\int_{]0,\infty[}e^{-s}\frac{d}{ds}V(s)ds,\,\,\,V(s):=\int_{\{\xi<s\}}1_{\mathcal{C}}dp
\]
But since $V$ is homogeneous of degree $m$, we have $V(s)=s^{m}V(\xi)$
and hence formula \ref{eq:V as Laplac transf} follows from computing
\[
m\int_{]0,\infty[}e^{-s}s^{m-1}ds=m!
\]
The first statement of the lemma then follows from standard convex
analysis. Finally, we have
\[
-d_{\xi}\log\int_{\mathcal{C}^{*}}e^{-\left\langle \xi,p\right\rangle }dp=\frac{\int_{\mathcal{C}^{*}}pe^{-\left\langle \xi,p\right\rangle }dp}{\int_{\mathcal{C}^{*}}e^{-\left\langle \xi,p\right\rangle }dp}=mb_{P_{\xi}},
\]
 where the last equality follows from setting $s=\left\langle \xi,p\right\rangle $
and changing the order of integration to get 
\[
\int_{\mathcal{C}^{*}}pe^{-\left\langle \xi,p\right\rangle }dp=\left(\int_{\mathcal{C}^{*}\cap\{\left\langle \xi,p\right\rangle =s\}}pd\lambda_{n}\right)e^{-s}ds=\left(\int_{\mathcal{C}^{*}\cap\{\left\langle \xi,p\right\rangle =1\}}pd\lambda_{m-1}\right)\int_{0}^{\infty}s^{m}e^{-s}ds
\]
 and 
\[
\int_{\mathcal{C}^{*}}e^{-\left\langle \xi,p\right\rangle }dp=\left(\int_{\mathcal{C}^{*}\cap\{\left\langle \xi,p\right\rangle =s\}}d\lambda_{m-1}\right)e^{-s}ds=\left(\int_{\mathcal{C}^{*}\cap\{\left\langle \xi,p\right\rangle =1\}}d\lambda_{m-1}\right)\int_{0}^{\infty}s^{n}e^{-s}ds.
\]
Let now $\xi$ be the unique minimizer in question, Since, as explained
above, $\xi$ is an interior point of the convex set $\mathcal{C}\cap\left\{ \left\langle l,\cdot\right\rangle =m\right\} $
it follows from the previous lemma that
\[
b_{\xi}=cl
\]
 for a non-zero constant $c.$ Finally, since $1=\left\langle \xi,b_{\xi}\right\rangle =\left\langle \xi,l\right\rangle /m$
it must be that $c=1/m$, which concludes the proof of Prop \ref{prop:bary}. 
\end{proof}

\subsection{\label{subsec:Variational-construction-of}Variational construction
of the solution $f$}

Given a proper convex cone $\mathcal{C}^{*}$ in $(\R^{m})^{*}$ and
an $l$-normalized Reeb vector $\xi$ denote by $\mathcal{E_{\xi}}(\R^{m})$
the space of all convex functions $\Phi$ on $\R^{m}$ such that $\Phi$
is equivariant with respect to $\xi$ and $\overline{(\partial\Phi)(\R^{m})}=P_{\xi},$
where $P_{\xi}$ is the convex body defined by formula \ref{eq:def of P xi}
and $\partial\Phi$ denotes the sub-gradient of $\Phi,$ viewed as
a multivalued function. Denote by $\mathcal{E}_{\xi}^{1}(\R^{m})$
the subspace of $\mathcal{E_{\xi}}(\R^{m})$ consisting of all $\Phi$
such that $\Phi^{*}\in L^{1}(P_{\xi}),$ where $\Phi^{*}$ is the
Legendre\textendash Fenchel transform of $\Phi$, i.e. the convex
function on $P_{\xi}$ defined by 
\[
\Phi^{*}(p):=\sup_{x\in\R^{m}}\left\langle x,p\right\rangle -\Phi(x).
\]
 Consider the following functionals on $\mathcal{E}_{\xi}^{1}(\R^{m}):$ 

\begin{equation}
\mathcal{D}(\Phi):=-\frac{1}{m}\log\int_{\R^{m}/\R\xi}e^{-m\Phi}e^{\left\langle l,x\right\rangle }i_{\xi}dx-\mathcal{E}(\Phi)\label{eq:Ding in convex setting}
\end{equation}
 where $i_{\xi}dx$ denotes the $(n-1)$-form on $\R^{m}$ obtained
by contracting $dx$ with $\xi,$ which yields a well-defined measure
on $\R^{m}/\R\xi$ and 
\[
\mathcal{E}(\Phi):=-\frac{1}{V(P_{\xi},d\lambda_{m-1})}\int_{P_{\xi}}\Phi^{*}d\lambda_{m-1},
\]
 where $d\lambda_{m-1}$ denotes any choice of Lebesgue measure on
the hyperplane \{$\xi=1\}$ and $V(P_{\xi},d\lambda_{m-1})$ denotes
the volume of $P_{\xi}$ with respect to $d\lambda_{m-1}.$
\begin{thm}
\label{thm:variational convex}The following holds. 
\begin{itemize}
\item The functional $\mathcal{D}$ is bounded from below on $\mathcal{E}_{\xi}^{1}(\R^{m})$
iff $\xi$ is the unique minimizer of the volume $V(\xi)$ on the
Reeb cone. 
\item $\Phi$ minimizes $\mathcal{D}$ on $\mathcal{E}_{\xi}^{1}(\R^{m})$
iff $e^{\Phi}$ is a solution to the equation in Theorem \ref{thm:MA for convex cone}.
\end{itemize}
Moreover, if $\Phi_{j}$ is a minimizing sequence for $\mathcal{D}$
in $\mathcal{E}_{\xi}^{1}(\R^{m}),$ i.e. 
\[
\lim_{j\rightarrow\infty}\mathcal{D}(\Phi_{j})=\inf_{\mathcal{E}_{\xi}^{1}(\R^{m})}\mathcal{D>-\infty},
\]
 then there exist sequences $a_{j}\in\R^{m}$ such that the translated
sequence $\Phi_{j}(a_{j}+x)$ converges in $C_{loc}^{0}(\R^{m})$
to $\Phi\in\mathcal{E}_{\xi}^{1}(\R^{m}),$ where $e^{\Phi}$ is a
solution to the equation in Theorem \ref{thm:MA for convex cone}.
\end{thm}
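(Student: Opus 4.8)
The plan is to recognize $\mathcal{D}$ as a Ding-type (free energy) functional for the second boundary value problem and to run the standard variational argument, the essential new inputs being the volume/barycenter dictionary of Proposition \ref{prop:bary}, the $L^{\infty}$-estimate of Lemma \ref{lem:L infty estimate in convex settting}, and convexity of $\mathcal{D}$ along appropriate paths. First I would descend to $W/\R\xi\cong\R^{m-1}$ exactly as in the proof of Theorem \ref{thm:MA for convex cone}: with $\phi:=\Phi-l/m$ a convex function on $W/\R\xi$, one checks that $\mathcal{D}(\Phi)$ equals, up to an additive constant and a harmless rescaling, $-\frac1m\log\int_{W/\R\xi}e^{-m\phi}\,dx+\frac{1}{V(Q_{\xi})}\int_{Q_{\xi}}\phi^{*}\,dp$ with $Q_{\xi}=P_{\xi}-l/m$, which fits the framework of \cite[Thm 1.1]{be-be}. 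I would then record two elementary transformation rules: $\mathcal{D}$ is invariant under $\Phi\mapsto\Phi+c$, and under a translation $\Phi\mapsto\Phi(\cdot+a)$ transverse to $\xi$ it changes by $\langle a,\,l/m-b_{P_{\xi}}\rangle$ (a translation in the $\xi$-direction merely adds a constant, since $\Phi$ is $\xi$-equivariant). By Proposition \ref{prop:bary} this shows that $\mathcal{D}$ is invariant under the whole additive group $\R^{m}$ exactly when $\xi$ minimizes the volume, and that otherwise there is a direction $a$ with $\mathcal{D}(\Phi(\cdot+ta))\to-\infty$ as $t\to\infty$; this already yields the implication ``$\mathcal{D}$ bounded below $\Rightarrow$ $\xi$ is the volume minimizer''.

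For the converse implication and the identification of minimizers I would use convexity along geodesics. Define a geodesic $\Phi_{t}$ by $\Phi_{t}^{*}:=(1-t)\Phi_{0}^{*}+t\Phi_{1}^{*}$ on $P_{\xi}$; then the energy $-\mathcal{E}(\Phi_{t})=\frac1{V(P_{\xi})}\int_{P_{\xi}}\Phi_{t}^{*}$ is affine in $t$, while $(t,x)\mapsto\Phi_{t}(x)=\sup_{p\in P_{\xi}}(\langle x,p\rangle-\Phi_{t}^{*}(p))$ is jointly convex, so by Pr\'ekopa's theorem $t\mapsto\int e^{-m\Phi_{t}}e^{\langle l,x\rangle}i_{\xi}dx$ is log-concave; hence $\mathcal{D}$ is convex along geodesics, and strictly so, by the equality case of Pr\'ekopa, except along the translation geodesics. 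A first-variation computation shows that the Euler--Lagrange equation of $\mathcal{D}$, in the reduced picture, is $\det\nabla^{2}\phi=c\,e^{-m\phi}$, which by the reduction of the previous subsection is equivalent to $e^{\Phi}$ solving the equation of Theorem \ref{thm:MA for convex cone} (the constant $c$ being absorbed by a shift in the $\xi$-direction). When $\xi$ is the volume minimizer such a solution exists by Theorem \ref{thm:MA for convex cone}, and geodesic convexity then forces it to be a global minimum, so $\inf\mathcal{D}>-\infty$; this completes the first bullet. Conversely, a minimizer is a weak critical point, hence a weak solution of the real Monge--Amp\`ere equation, and classical interior regularity (using the locally bounded potential provided by the a priori $L^{\infty}$-estimate) makes it a genuine smooth solution; the strict-convexity/equality statement then shows that two minimizers differ by a translation, which gives the second bullet and the ``uniqueness modulo translations'' assertion.

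It remains to prove the compactness statement; we may assume $\xi$ is the volume minimizer, since otherwise $\inf\mathcal{D}=-\infty$. Given a minimizing sequence $\Phi_{j}$, I would use the invariances above to replace it by a sequence normalized so that $\sup_{\R^{m}}(\Phi_{j}-\phi_{P_{\xi}})=0$ and, after a further translation $a_{j}$, centered so that the origin is mapped by the subgradient of $\Phi_{j}$ to the barycenter $l/m$, which is an interior point of $P_{\xi}$. Since the subgradients of $\Phi_{j}$ lie in the bounded set $P_{\xi}$, each $\Phi_{j}$ is Lipschitz with a uniform constant, and the centering controls $\Phi_{j}(0)$; hence $\{\Phi_{j}\}$ is precompact in $C^{0}_{loc}(\R^{m})$ and a subsequence converges to a convex $\xi$-equivariant function $\Phi$. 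The point is then to show that no Monge--Amp\`ere mass escapes to infinity, so that $\overline{(\partial\Phi)(\R^{m})}$ is still all of $P_{\xi}$ and $\Phi^{*}\in L^{1}(P_{\xi})$, and that $\mathcal{D}$ is lower semicontinuous along the convergence; these follow from a coercivity (Moser--Trudinger type) estimate for $\mathcal{D}$, valid precisely in the volume-minimizing case, obtained by combining the $\R^{m}$-invariance of $\mathcal{D}$ (which disposes of the non-compact directions) with the bound of Lemma \ref{lem:L infty estimate in convex settting} and the exponential-decay estimate \ref{eq:exp decay} applied to the centered competitors. Granting this, $\mathcal{D}(\Phi)\le\liminf_{j}\mathcal{D}(\Phi_{j})=\inf\mathcal{D}$, so $\Phi$ minimizes, and by the second bullet $e^{\Phi}$ solves the equation of Theorem \ref{thm:MA for convex cone}.

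I expect the main obstacle to be exactly this last step: ruling out loss of Monge--Amp\`ere mass at infinity along a normalized minimizing sequence and establishing the lower semicontinuity of $\mathcal{D}$, i.e.\ the quantitative use of the volume-minimization hypothesis (equivalently, the barycenter identity of Proposition \ref{prop:bary}). The remaining ingredients---the transformation rules, the Euler--Lagrange computation, and the convexity along geodesics---are routine once Proposition \ref{prop:bary}, Theorem \ref{thm:MA for convex cone} and Pr\'ekopa's theorem are in hand.
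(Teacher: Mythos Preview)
Your proposal is correct and begins with exactly the same reduction as the paper: pass to $W/\R\xi\cong\R^{m-1}$ via $\phi=\Phi-l/m$, identify $\mathcal{D}(\Phi)$ with the Ding-type functional $D(\phi)=-\frac{1}{m}\log\int e^{-m\phi}+\frac{1}{V(Q_{\xi})}\int_{Q_{\xi}}\phi^{*}$ on $\mathcal{E}^{1}(\R^{m-1})$, and translate the volume-minimization condition on $\xi$ into the barycenter condition $0=b_{Q_{\xi}}$ via Proposition~\ref{prop:bary}.

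The difference is only one of packaging. The paper's proof stops here and simply cites \cite{be-be}: boundedness below of $D$ is \cite[Thm~2.16]{be-be}, the characterization of minimizers as smooth solutions is \cite[\S2.8--2.9]{be-be}, and convergence of minimizing sequences is extracted from the proof of \cite[Thm~1.1]{be-be}. You instead outline a self-contained proof of these cited facts---the translation law $\mathcal{D}(\Phi(\cdot+a))-\mathcal{D}(\Phi)=\langle a,\,l/m-b_{P_{\xi}}\rangle$, geodesic convexity via Pr\'ekopa, the Euler--Lagrange computation, and a coercivity/compactness argument using Lemma~\ref{lem:L infty estimate in convex settting}. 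This is precisely the content of the relevant parts of \cite{be-be}, so your sketch is a faithful unpacking of what the paper invokes as a black box; it buys self-containment at the cost of length, while the paper's version is a two-line reduction to prior work. There is no genuine gap in your argument, and the step you flag as the main obstacle (no escape of mass, lower semicontinuity) is indeed the substantive point, handled in \cite{be-be} by the coercivity estimate that the paper records just after the proof.
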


\begin{proof}
In terms of the coordinates $(s,t)$ on $\R^{m},$ appearing in formula
\ref{eq:x maps to}, we can write $\Phi=\phi(s)+t,$ where $\phi(s)$
is a convex function on $\R^{m-1}$ such $\overline{(\partial\phi)(\R^{m-1})}=Q_{\xi}$
and $\phi^{*}\in L^{1}(Q_{\xi}).$ Denote by $\mathcal{E}^{1}(\R^{m-1})$
the space of all such functions $\phi.$ Then 
\[
\mathcal{D}(\Phi)=D(\phi):=-\frac{1}{m}\log\int_{\R^{n}}e^{-m\phi(s)}ds+\frac{1}{V(Q_{\xi},dq)}\int_{Q_{\xi}\Subset\R^{n}}\phi^{*}(q)dq.
\]
Recall that, as shown in the proof of Theorem \ref{thm:MA for convex cone},
$\xi$ minimizes $V(\xi)$ iff $0$ is the barycenter of the convex
body $Q_{\xi}.$ By \cite[Thm 2.16]{be-be} the latter condition equivalently
means that $D$ is bounded from below on $\mathcal{E}^{1}(\R^{m-1}).$
Moreover, as shown in Section 2.8 and Section 2.9 of \cite{be-be}
$\phi$ minimizes $D$ on $\mathcal{E}^{1}(\R^{m-1})$ iff $\phi$
is a smooth solution of the equation \ref{eq:MA eq for phi of X},
which, as shown in the proof of Theorem \ref{thm:MA for convex cone},
equivalently means that $e^{\Phi}$ is a solution to the equation
in Theorem \ref{thm:MA for convex cone}. Finally, the convergence
of a $\mathcal{D}-$minimizing sequence $\Phi_{j}$ follows from the
convergence of a $D$-minimizing sequence $\phi_{j}$ established
in the proof of \cite[Thm 1.1]{be-be}.

In fact, setting 
\[
J(\Phi):=-\mathcal{E}(\Phi)+\sup_{\R^{m}}(\Phi-\Phi_{P_{\xi}})
\]
the proof of the previous theorem shows (thanks to the results in
\cite{be-be} alluded to above) that $\xi$ minimizes the volume function
on the Reeb cone iff $\mathcal{D}$ is coercive, modulo translations
or more precisely: iff there exists a constant$C>0$ such that

\[
\mathcal{D}(\Phi)\geq\frac{1}{C}\inf_{a\in\R^{m}}J(a^{*}\Phi)-C,
\]
 where $a^{*}\Phi(x):=\Phi(x+a)$ is the action on $\Phi$ by a translation
of $\R^{m}.$ 
\end{proof}

\section{Toric varieties and Conical Calabi\textendash Yau metrics}

\subsection{The toric setup}

Fixing the rank $m$ we will denote by $T_{\C}$ the complex torus
$(\C^{*})^{m},$ endowed with its standard group structure and by
$T$ the corresponding compact torus. The group of characters on $T_{\C}$
and the group of 1-parameters subgroups of $T_{\C}$ are denoted,
as usual, by $M$ and $N,$ respectively: 
\[
M:=\text{Hom\ensuremath{(T_{\C},\C^{*}),\,\,\,N:=}\text{Hom\ensuremath{(\C^{*},T_{\C})} }}
\]
Composing the corresponding homomorphisms yields a character on $\C^{*},$
which defines a non-degenerate pairing between $M$ and $N.$ The
real vector spaces $N\otimes\R$ and $M\otimes\R$ are naturally identified
with the Lie algebra $\mathfrak{t}$ of $T$ and its dual $\mathfrak{t}^{*}$
respectively. 

Let now $Y$ be an $m-$dimensional\emph{ normal affine toric variety},
i.e. $Y$ is a normal affine variety endowed with a faithful action
by the complex torus $T_{\C}$ with an open dense orbit. An extensive
exposition of general toric varieties is given in the book \cite{c-l-s}.
Anyway, we will recall the basic properties of affine toric varieties
that we shall need.

We will identify $T_{\C}$ with the corresponding open orbit in $Y.$
Decompose the ring of holomorphic functions $\mathcal{R}(Y)$ on $Y$
with respect to the induced $T_{\C}-$action on $\mathcal{R}(Y)$:
\[
\mathcal{R}(Y)=\oplus_{p\in G}\C F_{p},
\]
 where $G$ denotes the corresponding set of characters in $M.$ Restricting
$F_{p}$ to the dense $T_{\C}-$orbit, endowed with its standard complex
coordinates $(z_{1},...,z_{m}),$ we can identify a character $F_{p}$
with a multinomial:
\[
F_{p}:=z^{p}:=z_{1}^{p_{1}}\cdots z_{m}^{p_{m}},
\]
 where $G$ has been identified with $\Z^{m}.$ Since $\mathcal{R}$
is a finitely generated ring the set $G$ forms a commutative semi-group
in $M,$ which is finitely generated (i.e. $G$ is an affine semi-group).
Denote by $\mathcal{C}^{*}$ the convex hull of the set $G:$ 
\[
\mathcal{C}^{*}:=\text{Conv\ensuremath{(G)\subset M\otimes\R(=\mathfrak{t}^{*})}}
\]
Then $\mathcal{C}$ is a proper convex cone, in the sense of Section
\ref{sec:Convex-cones-and}, which is polyhedral and the action of
$T$ on $Y$ is good, in the sense of Section \ref{subsec:Background}
(see \cite[Thm 1.3.5]{c-l-s} and \cite[Prop 1.2.12]{c-l-s}; proper
convex cones are called strongly convex cones in \cite{c-l-s}).

Denote by $\mathcal{C}$ the dual convex cone defined by all elements
in the linear dual of $M\otimes\R$ which are non-negative on $\mathcal{C}^{*}.$
Given an element $\xi$ in $N\otimes\R$ its weights are defined by
the real numbers 
\[
\lambda_{\xi}(p):=\frac{1}{i}\mathcal{L}_{\xi}F_{p}=\left\langle \xi,p\right\rangle ,\,\,\,p\in G
\]
Hence, $\xi$ is a Reeb vector (in the terminology recalled in Section
\ref{subsec:Background}) iff $\xi$ is an interior point of the cone
$\mathcal{C}.$ Accordingly, the interior of $\mathcal{C}$ will be
called the \emph{Reeb cone}. Denote by $\xi_{1},...,\xi_{d}$ the
non-zero primitive vectors in the lattice $N$ such that 
\[
\mathcal{C}^{*}=\left\langle \xi_{i},\cdot\right\rangle \geq0,\,\,\,i=1,..,d
\]
Equivalently, this means that $\mathcal{C}$ is the convex hull of
$0,\xi_{1},...,\xi_{d}.$ 

Given a Reeb vector $\xi$ and a positive number $C$ we let $N(\xi,C)$
be the number of lattice points $p$ in $\mathcal{C}^{*}$satisfying
$\left\langle \xi,p\right\rangle \leq C.$ Fixing an enumeration of
the points in question we have the following basic
\begin{prop}
\label{prop:toric embedding}Given a Reeb vector $\xi$ and $C>0.$
For $C$ sufficiently large the map 
\[
(\C^{*})^{m}\rightarrow\C^{N},\,\,\,z\mapsto Z:=\left(z^{p_{1}},...,z^{p_{N}}\right)
\]
 with $N:=N(\xi,C)$ is injective and $T_{\C}-$equivariant and its
closure is isomorphic to the affine variety $Y$ corresponding to
the convex cone $\mathcal{C}^{*}.$ 
\end{prop}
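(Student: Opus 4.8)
The plan is to exhibit the map in question as the restriction to the open orbit of the standard closed embedding of an affine toric variety associated to a finite generating set of the semigroup $G$, and then to verify the three claimed properties (injectivity on $(\C^{*})^{m}$, $T_{\C}$-equivariance, and that the Zariski closure recovers $Y$) directly from basic toric geometry. The key point is that once $C$ is large enough the lattice points $p_{1},\dots,p_{N}$ with $\langle\xi,p\rangle\le C$ already generate the semigroup $G$, since $G$ is finitely generated and $\xi$ lies in the interior of $\mathcal{C}$, so that $\langle\xi,\cdot\rangle$ is bounded on any finite generating set; hence for $C$ exceeding that bound the slab $\{\langle\xi,\cdot\rangle\le C\}$ contains such a generating set. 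This is the one genuinely ``threshold'' input and I would isolate it as the first step.

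First I would fix a finite generating set $\{q_{1},\dots,q_{k}\}$ of the affine semigroup $G\subset M$, set $C_{0}:=\max_{j}\langle\xi,q_{j}\rangle$ (finite, and we may take $C>C_0$), and observe that $\{p:\ p\in G,\ \langle\xi,p\rangle\le C\}$ then contains all the $q_{j}$; denote the full list of such lattice points by $p_{1},\dots,p_{N}$. Next, $T_{\C}$-equivariance is immediate: if $T_{\C}$ acts on $\C^{N}$ diagonally through the characters $p_{1},\dots,p_{N}$, then $(t\cdot z)^{p_{i}}=t^{p_{i}}z^{p_{i}}$, so $z\mapsto(z^{p_1},\dots,z^{p_N})$ intertwines the $T_{\C}$-action on $(\C^{*})^m$ with this linear action. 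Injectivity on $(\C^{*})^{m}$ follows because the $p_{i}$ span $M\otimes\Q$ as a group: indeed $G$ generates $M$ as a group (the open orbit is $T_{\C}=(\C^{*})^{m}$, equivalently $\mathcal{C}^{*}$ is full-dimensional and $G$ generates the lattice $M\cong\Z^{m}$), and the subgroup generated by $\{q_{j}\}$ is all of $M$; so the monomials $z^{p_i}$ already separate points of $(\C^{*})^m$, which is exactly injectivity of $z\mapsto Z$.

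Finally, for the identification of the closure with $Y$: the image of $(\C^{*})^{m}$ under $z\mapsto Z$ is an algebraic torus, and its Zariski closure $\overline{Y}$ in $\C^{N}$ is by construction the affine toric variety $\operatorname{Spec}\C[\sigma^{\vee}\cap M]$ associated to the semigroup generated by $p_{1},\dots,p_{N}$, which — since this set contains the generating set $\{q_{j}\}$ and is contained in $G$ — is precisely $G$; thus $\C[\overline{Y}]=\bigoplus_{p\in G}\C F_{p}=\mathcal{R}(Y)$ as $T_{\C}$-algebras, and since both $Y$ and $\overline{Y}$ are affine and normal (each is $\operatorname{Spec}$ of the same ring, $\overline Y$ being normal because $G$ is saturated in $M$, $G=\mathcal{C}^{*}\cap M$), they are $T_{\C}$-equivariantly isomorphic. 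Here one uses that $\mathcal{C}^{*}$, being the convex hull of $G$, is the rational polyhedral cone with $\mathcal{C}^{*}\cap M=G$ (saturation of the coordinate ring of a normal toric variety, cf.\ \cite[Thm 1.3.5]{c-l-s}), which is why no extra lattice points sneak in when we pass to the convex cone.

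The main obstacle is the first step — pinning down how large $C$ must be — but it is mild: it reduces to the finite generation of $G$ together with the interiority of $\xi$ in $\mathcal{C}$, which forces $\langle\xi,\cdot\rangle$ to be a strictly positive linear functional on $G\setminus\{0\}$ and in particular bounded on any finite generating set. Everything after that is bookkeeping with monomials and the standard dictionary between affine toric varieties and finitely generated saturated subsemigroups of $M$.
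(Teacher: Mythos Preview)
Your proposal is correct and follows essentially the same approach as the paper: you use finite generation of $G$ to ensure that for $C$ large the slab $\{\langle\xi,\cdot\rangle\le C\}$ contains a generating set, and then invoke the standard dictionary between affine toric varieties and their semigroups. The paper does the same in one line, citing \cite[Prop~1.1.14]{c-l-s} for the embedding statement, whereas you unpack injectivity, equivariance, and the identification of the closure explicitly.
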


\begin{proof}
Since the group $G$ is finitely generated we have that $\mathcal{C}^{*}=\N(\{p_{1},...,p_{r}\}$
for a finite number of $p_{i}\in\mathcal{C}^{*}.$ For $C\geq\min\left\langle \xi,p_{i}\right\rangle $
we thus get $\mathcal{C}^{*}=\N(\{\mathcal{C}^{*}\cap\left\langle \xi,\cdot\right\rangle \leq C\}$
and hence the proposition follows from \cite[Prop 1.1.14]{c-l-s}. 
\end{proof}
Given a Reeb vector $\xi$ exponentiating the vector field on $T_{\C}$
defined by $-J\xi$ yields an $\R^{*}-$action on $Y.$ Accordingly,
the embedding in the previous proposition is $\R^{*}-$ equivariant
when $\C^{N}$ is endowed with the linear action 
\[
(c,Z_{i})\mapsto c^{\left\langle \xi,p_{i}\right\rangle }Z_{i},\,\,\,\R^{*}\times\C^{N}\rightarrow\C^{N}
\]
(the space $\C^{N}$ endowed with such an $\R^{*}-$action is called\emph{
weighted $\C^{N}).$} We also recall the following
\begin{prop}
\label{prop:Q Gore}An affine toric variety $Y$ is $\Q-$Gorenstein
iff there exists an element $l\in M\otimes\Q$ such that 
\[
\left\langle \xi_{i},l\right\rangle =1
\]
In particular, $l$ is an interior point of the cone $\mathcal{C}^{*}.$
\end{prop}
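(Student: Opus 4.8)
The plan is to reduce the $\Q$-Gorenstein condition to the combinatorics of the cone $\mathcal{C}$ via the standard toric dictionary. First I would record that, in the notation of \cite{c-l-s}, $Y$ is the affine toric variety $U_{\sigma}$ attached to the cone $\sigma:=\mathcal{C}\subset N\otimes\R$; this cone is full-dimensional (equivalently strongly convex, equivalently $\mathcal{C}^{*}$ is strongly convex), which is part of the toric setup and is exactly what makes the $T$-action on $Y$ good, and by the description recalled just above its rays are generated precisely by the primitive vectors $\xi_{1},\dots,\xi_{d}$. Letting $D_{1},\dots,D_{d}$ denote the $T_{\C}$-invariant prime Weil divisors of $Y$, with $D_{i}$ attached to the ray $\R_{\geq0}\xi_{i}$, one has $K_{Y}=-\sum_{i=1}^{d}D_{i}$ (see \cite[Thm 8.2.3]{c-l-s}). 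Since for a normal variety a reflexive power of $\omega_{Y}$ is a line bundle iff the corresponding multiple of $K_{Y}$ is Cartier, $Y$ is $\Q$-Gorenstein if and only if $K_{Y}$ is $\Q$-Cartier.

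Next I would invoke the description of $\Q$-Cartier $T_{\C}$-invariant divisors on an affine toric variety: on $U_{\sigma}$, where $\sigma$ is the single maximal cone, the divisor $\sum_{i}a_{i}D_{i}$ is $\Q$-Cartier if and only if there is an $m\in M\otimes\Q$ with $\langle m,\xi_{i}\rangle=-a_{i}$ for all $i$ (cf.\ \cite[Thm 4.2.8]{c-l-s}). The point is that, in contrast with the general toric case, no compatibility between adjacent cones is required, so the Cartier data is a single linear functional and the condition is one system of linear equations. Applying this with $a_{i}=-1$ for all $i$, I conclude that $Y$ is $\Q$-Gorenstein if and only if there exists $l\in M\otimes\Q$ with $\langle\xi_{i},l\rangle=1$ for $i=1,\dots,d$, which is the asserted equivalence.

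For the final clause: since the $\xi_{i}$ are the primitive inward normals to the facets of the full-dimensional cone $\mathcal{C}^{*}=\{p:\langle\xi_{i},p\rangle\geq0\ \text{for all }i\}$, its interior is $\{p:\langle\xi_{i},p\rangle>0\ \text{for all }i\}$; as $\langle\xi_{i},l\rangle=1>0$ for every $i$, the vector $l$ lies in the interior of $\mathcal{C}^{*}$.

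I do not expect a genuine obstacle, since the proposition merely repackages well-known toric facts. The only delicate points are keeping the conventions straight --- the $\xi_{i}$ are the ray generators of the cone $\mathcal{C}$ that cuts out $Y$, equivalently the facet normals of $\mathcal{C}^{*}$, and the sign in $K_{Y}=-\sum_{i}D_{i}$ is precisely the one that turns $\langle m,\xi_{i}\rangle=-a_{i}$ into $\langle l,\xi_{i}\rangle=1$ --- together with the observation that on an affine toric variety the global $\Q$-Cartier condition collapses to the single linear system $\langle\xi_{i},l\rangle=1$ exactly because there is a unique maximal cone.
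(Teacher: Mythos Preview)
Your argument is correct: the identification $K_{Y}=-\sum_{i}D_{i}$ together with the single-cone Cartier criterion from \cite{c-l-s} immediately gives the equivalence, and the interior-point claim follows at once from the facet description of $\mathcal{C}^{*}$.

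The paper takes a different, more hands-on route. Instead of invoking the abstract $\Q$-Cartier criterion, it starts from the $\Q$-Gorenstein hypothesis in the form of the existence of a $T$-equivariant (multi-valued) nowhere-vanishing holomorphic top form $\Omega$ on $Y_{reg}$, writes its restriction to the open orbit as $\Omega=\chi\,\Omega_{0}$ with $\Omega_{0}=z_{1}^{-1}dz_{1}\wedge\cdots\wedge z_{m}^{-1}dz_{m}$, and computes the vanishing orders along the toric prime divisors $F_{i}$: since $\nu_{F_{i}}(\Omega_{0})=-1$, the condition $\nu_{F_{i}}(\Omega)=0$ forces $\nu_{F_{i}}(\chi)=1$, i.e.\ $\langle\xi_{i},l\rangle=1$ where $\chi^{r}=z^{rl}$. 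The converse is obtained by defining $\Omega$ through this formula. The two arguments are of course equivalent under the toric dictionary --- your Cartier data $l$ is exactly the weight of the trivializing section the paper constructs --- but the paper's version has the practical payoff of producing the explicit expression $\Omega=z^{l}\Omega_{0}$ on the open orbit, which is used immediately afterwards (formula \ref{eq:formula for Omega in terms of l}) to rewrite the Calabi--Yau equation as a real Monge--Amp\`ere equation. Your proof is cleaner if the proposition is the endpoint; the paper's is tailored to what comes next.
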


\begin{proof}
This is well-known \cite{dai}, but for the convenience of the reader
we outline a proof. First assume that $Y$ is $\Q-$Gorenstein (which
is the direction that we shall need). Then it admits a (manyvalued)
holomorphic top form $\Omega$ (defined and non-vanishing on the regular
locus) which is equivariant with respect to the action of the torus
$T_{\C}.$ Hence, the restriction of $\Omega$ to $T_{\C}\subset Y$
may be expressed as 
\begin{equation}
\Omega=\chi\Omega_{0},\,\,\,\Omega_{0}:=z_{1}^{-1}dz_{1}\wedge...\wedge z_{m}^{-1}dz_{m}\label{eq:Omega in terms of Omega not}
\end{equation}
for some many valued character $\chi,$ i.e. $\chi^{r}$ is a well-defined
character for some integer $r.$ The holomorphicity condition and
the non-vanishing condition implies that $\nu_{F_{i}}(\Omega)=0,$
where, $\nu_{F}$ denotes the order of vanishing along a given prime
divisor on $Y$ and $F_{i}$ denote the toric prime divisors whose
union give $Y-T_{\C}.$ It follows readily from its definition that
$\Omega_{0}$ defines a rational (meromorphic) top form on $Y$ such
that $\nu_{F_{i}}(\Omega_{0})=-1.$ Hence, $\nu_{F_{i}}(\Omega)=0$
iff $\nu_{F_{i}}(\chi)=1.$ But writing $\chi^{r}=z^{p}$ for some
$p\in\mathcal{C}\cap M$ this means that $\left\langle \xi_{i},p/r\right\rangle =1$
(by the ``orbit-cone correspondence'' \cite{c-l-s}). Hence, we
can take $l:=p/r.$ Conversely, if an element $l$ as in the proposition
exists we can simply define $\Omega$ by formula \ref{eq:Omega in terms of Omega not}.
Then the previous argument shows that $\nu_{F_{i}}(\Omega)=0,$ which
shows that $\Omega$ defines a non-vanishing equivariant (manyvalued)
holomorphic top form on $Y$ with positive weight under $\xi.$ By
general facts, this implies that $Y$ is $\Q-$Gorenstein. 
\end{proof}

\subsubsection{Conical Kähler potentials $r^{2}$ and $\xi-$equivariant functions
$\Phi$}

A function $r$ on $Y$ will be said to be\emph{ radial with respect
to $\xi$ }if it is non-negative homogeneous of degree one under the
corresponding $\R^{*}-$action and $\xi-$invariant. A function $f$
on $Y$ will be called \emph{a conical Kähler potential with respect
to $\xi$ }if it may be expressed as $f=r^{2}$ for a $\xi-$radial
function $r$ on $Y$ and $f$ is smooth and strictly plurisubharmonic
(psh) on $Y^{*}$ and continuous on $Y$ (i.e. $f$ is the restriction
of a smooth strictly plurisubharmonic function on some local embedding
of $Y^{*}$ into $\C^{M}).$ 
\begin{lem}
\label{lem:family of smooth conical}Let $Y$ be an affine variety
endowed with the action of a good torus $T$. For any given Reeb vector
$\xi$ there exists a conical Kähler potential $f_{\xi}:=r_{\xi}^{2}$
on $Y$ which is $T-$invariant. Moreover, $f_{\xi}$ may be chosen
so that the set $\{f_{\xi}=1\}$ is independent of $\xi.$ 
\end{lem}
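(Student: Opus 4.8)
The plan is to build $f_\xi$ by hand on a $\R^*$-equivariant embedding of $Y$ into a weighted $\C^N$, averaging over the compact torus $T$ at the end to gain $T$-invariance. First I would fix $C>0$ large enough that Proposition \ref{prop:toric embedding} applies, giving a $T_\C$- and $\R^*$-equivariant closed embedding $Y\hookrightarrow\C^N$ in which the $i$-th coordinate $Z_i=z^{p_i}$ carries $\R^*$-weight $\langle\xi,p_i\rangle$. The tautological guess is
\[
f_\xi(Z):=\Bigl(\sum_{i=1}^N |Z_i|^{2/\langle\xi,p_i\rangle}\Bigr),
\]
which is automatically homogeneous of degree $2$ under the weighted $\R^*$-action, hence of the form $r_\xi^2$ for a $\xi$-radial $r_\xi$. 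However this naive sum is only Lipschitz, not smooth, at the coordinate hyperplanes, so instead I would work with $f_\xi:=\rho_\xi^2$ where $\rho_\xi$ is defined implicitly as the unique positive solution $\rho$ of $\sum_i |Z_i|^2/\rho^{2\langle\xi,p_i\rangle}=1$; this is the standard trick (a "generalized Minkowski functional" for a weighted sphere) producing a function that is $1$-homogeneous under the $\R^*$-action, $T$-invariant once we average, real-analytic off the origin, and whose level set $\{f_\xi=1\}$ is exactly the unit weighted sphere $\sum|Z_i|^2=1$ intersected with $Y$ — in particular independent of $\xi$, giving the last assertion for free.

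The key steps, in order, are: (1) verify that $\rho_\xi$ is well-defined, positive, smooth and strictly positive on $Y^*$, and continuous up to $y_0$ where it vanishes (continuity at $y_0$ follows because on the compact slice $\sum|Z_i|^2=\epsilon$ the defining equation forces $\rho_\xi\to 0$ as $\epsilon\to 0$, uniformly, using that all weights $\langle\xi,p_i\rangle$ are strictly positive); (2) check $\xi$-invariance and $\R^*$-homogeneity of degree one, both immediate from the defining equation since the weighted $\R^*$-action scales $|Z_i|^2$ by $c^{2\langle\xi,p_i\rangle}$ and $\xi$ acts through the compact torus, fixing each $|Z_i|$; (3) show $f_\xi=\rho_\xi^2$ is strictly plurisubharmonic on $Y^*\subset\C^N_{\mathrm{reg}}$ — this is the analytic heart; (4) average: replace $f_\xi$ by $\int_T (g^*f_\xi)\,dg$, which preserves smoothness, strict psh-ness (average of strictly psh is strictly psh), the homogeneity, and the level set $\{f_\xi=1\}$ (since $T$ already preserves $\sum|Z_i|^2$), and now gains $T$-invariance.

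The main obstacle is step (3), strict plurisubharmonicity of $\rho_\xi^2$. The cleanest route is to note that $u:=\log\rho_\xi$ satisfies, after taking $\log$ of the defining relation, an identity of the form $\log\sum_i e^{2\log|Z_i| - 2\langle\xi,p_i\rangle u}=0$, i.e. $u$ is the "Legendre-type" function obtained from the convex function $t\mapsto\log\sum_i e^{\langle a_i,t\rangle}$; since $\log|Z_i|^2$ are pluriharmonic and $\log$-sum-exp is convex and increasing, a short computation with the complex Hessian shows $dd^c u\ge 0$ with strict positivity in the directions transverse to the $\R^*$-orbit, and then $dd^c(\rho_\xi^2)=\rho_\xi^2\,dd^c(e^{2u})= \rho_\xi^2(2\,dd^c u + 4\,\partial u\wedge\bar\partial u)$ is strictly positive including the missing radial direction thanks to the $\partial u\wedge\bar\partial u$ term (this is the familiar mechanism by which $e^{2u}$ is strictly psh even when $u$ is only weakly so along the cone direction). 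A convenient alternative, avoiding the implicit function entirely, is to invoke the fact — already implicit in the toric formalism of Section \ref{sec:Convex-cones-and} — that any proper Reeb vector $\xi$ admits an $l$-normalized $\xi$-homogeneous strictly convex exhaustion $\Phi$ of $\R^m$ with $\overline{(\nabla\Phi)(\R^m)}=P_\xi$ (one can take $\Phi=\Phi_{P_\xi}$ smoothed, or the function furnished by Theorem \ref{thm:MA for convex cone} for the minimizing $\xi$ and a fixed comparison for the rest), pull it back to the open orbit via $T_\C\to\R^m$, and then extend across $Y-T_\C$ by the explicit weighted-sum formula above, checking the two agree up to bounded factors; strict psh-ness on the orbit is then just strict convexity of $\Phi$, and near the toric divisors one checks it directly in the local chart provided by Proposition \ref{prop:toric embedding}. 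Either way the routine verifications are the homogeneity and the continuity at $y_0$; the substance is the Hessian positivity, and I would present it via the $\log$-sum-exp computation above.
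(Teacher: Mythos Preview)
Your approach is essentially the paper's: equivariantly embed $Y$ in weighted $\C^N$ and pull back a conical K\"ahler potential whose level set $\{f_\xi=1\}$ is the standard unit sphere.  The paper simply cites \cite[Lemma~2.2]{h-s} for the construction on weighted $\C^N$, whereas you spell out that very construction (the generalized Minkowski functional $\rho_\xi$ defined by $\sum_i |Z_i|^2\rho^{-2\langle\xi,p_i\rangle}=1$) and sketch the psh verification; this is fine, and your log--sum--exp argument for strict plurisubharmonicity is the right mechanism.

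Two small points.  First, the averaging over $T$ is superfluous: the embedding is already $T$-equivariant for a diagonal linear action, so each $|Z_i|$ is $T$-invariant and hence so is $\rho_\xi$.  Second, your proposed ``alternative'' via the convex functions of Section~\ref{sec:Convex-cones-and} should be dropped: it presupposes $Y$ toric (the lemma is stated for any good torus action), Theorem~\ref{thm:MA for convex cone} only produces $\Phi$ for the \emph{volume-minimizing} $\xi$, and the extension across $Y\setminus T_\C$ is left vague.  Also, in your displayed formula you have a stray factor: $\rho_\xi^2=e^{2u}$, so $dd^c(\rho_\xi^2)=dd^c(e^{2u})$, not $\rho_\xi^2\,dd^c(e^{2u})$; the final expression you wrote is nonetheless correct.
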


\begin{proof}
In the case when $Y-\{y_{0}\}$ is smooth this is the content of \cite[Lemma 2.2]{h-s}.
In particular, when $Y$ is weighted $\C^{N}$ we can define $f_{\xi}$
so that $\{f_{\xi}=1\}$ is the unit-sphere. The general case when
$Y$ is singular then follows by embedding $Y$ equivariantly into
weighted $\C^{N}$ (as in Prop \ref{prop:toric embedding}) and restricting
$f_{\xi}$ on $\C^{N}$ to the image of $Y$ (under this embedding
the action of $T$ on $Y$ is intertwined by the linear action of
a good torus on $\C^{N}).$
\end{proof}
Note that if $r^{2}$ is a conical Kähler potential, then the function
$\Phi:=\log r^{2}$ is\emph{ $\xi-$equivariant }in the following
sense:
\begin{equation}
\mathcal{L}_{\xi}\Phi=0,\,\,\mathcal{L}_{-J\xi}\Phi=2.\label{eq:xi equi}
\end{equation}

\subsection{The YTD conjecture for toric affine varities}

In order to prove Theorem \ref{thm:main intro}, stated in the introduction,
it will be enough to prove the following
\begin{thm}
\label{thm:toric CY iff vol min}Let $Y$ be an affine toric variety
which is $\Q-$Gorenstein and denote by $T_{m}$ the corresponding
maximal torus and $\xi$ a Reeb vector in the Lie algebra of $T_{m}.$
Then the following is equivalent:
\begin{itemize}
\item $(Y,T_{m},\xi)$ admits a conical Calabi-Yau metric, which is $T_{m}-$invariant
(and uniquely determined modulo the action of $T_{\C})$
\item $\xi$ is the unique minimizer of the volume functional on the space
of normalized Reeb vectors
\end{itemize}
\end{thm}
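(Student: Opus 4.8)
\textbf{Proof plan for Theorem \ref{thm:toric CY iff vol min}.}
The plan is to reduce the toric Calabi--Yau equation on $Y$ to the real Monge--Amp\`ere problem solved in Theorem \ref{thm:MA for convex cone}, and then to promote a weak solution on the open orbit to a genuine conical Calabi--Yau metric on $Y_{reg}$. First I would set up the dictionary: by Proposition \ref{prop:Q Gore}, $\Q$-Gorensteinness gives an $l\in M\otimes\Q$ with $\langle\xi_i,l\rangle=1$ for all $i$, which is exactly the normalization data needed to write the equivariant holomorphic top form $\Omega$ as in \eqref{eq:Omega in terms of Omega not}, and hence to turn the Calabi--Yau equation \eqref{eq:CY intro} into $\left(dd^c(r^2)\right)^m = i^{m^2}\Omega\wedge\bar\Omega$ restricted to $T_\C$. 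Writing $r^2 = e^{\Phi}$ with $\Phi = \log|z|^2$-pulled-back to $\R^m$ (the Lie algebra of $T$ via the standard fibration $T_\C\to\R^m$, $z\mapsto x=\log|z|^2/2$ or similar), a $T$-invariant conical K\"ahler potential corresponds precisely to a convex $\xi$-equivariant function on $\R^m$ in the sense of \eqref{eq:xi equi}, and the complex Monge--Amp\`ere equation for $r^2$ on the open orbit becomes the real Monge--Amp\`ere equation $\det\nabla^2 f = C e^{\langle l,x\rangle}$ for $f=e^{\Phi}$, with gradient image the moment cone $\mathcal{C}^*=\mathrm{Conv}(G)$. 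The conical constraint forces $\langle l,\xi\rangle=m$, i.e. $\xi$ is $l$-normalized.

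Given this dictionary, Theorem \ref{thm:MA for convex cone} applied to the cone $\mathcal{C}^*$ and the vector $l$ yields: there is a smooth strictly positive convex $f$, one-homogeneous with respect to $\xi$, solving the real Monge--Amp\`ere equation with gradient image $\mathcal{C}^*$, if and only if $\xi$ minimizes the volume $V(\xi)$ among $l$-normalized Reeb vectors; moreover such $\xi$ is unique and $f$ is unique modulo translation by $\R^m$ (which on the toric side is the residual action of $T_\C$ modulo the compact torus, matching the uniqueness-modulo-$T_\C$ claim). It remains to identify the combinatorial volume $V(\xi)=\mathrm{Vol}(\mathcal C^*\cap\{\langle\xi,\cdot\rangle\le 1\})$ of Section \ref{sec:Convex-cones-and} with the analytic volume functional \eqref{eq:V xi general intro} on normalized Reeb vectors of $(Y,T_m)$; this is the standard asymptotic lattice-point count $t^m\sum_i e^{-t\lambda_{\xi}(p_i)}\to m!\int_{\mathcal C^*}e^{-\langle\xi,p\rangle}dp = V(\xi)$ (up to the fixed normalization constant $m!$, which does not affect minimizers), using that $G$ is an affine semigroup spanning $\mathcal C^*$. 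So the equivalence at the level of the open orbit is essentially immediate from the results already in the paper.

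The main obstacle is the last step: upgrading the solution $f$, which a priori is a smooth conical Calabi--Yau potential only on the open orbit $T_\C\subset Y$, to a smooth conical Calabi--Yau metric on all of $Y_{reg}$ with $r^2$ bounded (hence continuous, by \cite[Prop 4.1]{c-g-s}) near the vertex $y_0$. For this I would argue as follows. First, boundedness near $y_0$: the stronger bound \eqref{eq:bound on f}, $f(x)\le C e^{\phi_{P_\xi}(x)}$, together with the comparison of $f$ with the fixed $T$-invariant conical potential $f_\xi=r_\xi^2$ from Lemma \ref{lem:family of smooth conical}, shows $r^2$ extends continuously across $Y\setminus T_\C$ and is bounded near $y_0$ — indeed $e^{\phi_{P_\xi}}$ is comparable to the $\xi$-radial function $r_\xi^2$ after the toric embedding of Proposition \ref{prop:toric embedding}. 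Second, smoothness on $Y_{reg}$: one extends $\Phi=\log r^2$ to a bounded $T$-invariant $\omega$-psh-type function on $Y_{reg}$ (it is a priori only locally bounded), notes it solves the Calabi--Yau equation weakly there, and then runs a transversal version of the Laplacian (Aubin--Yau) estimate — the transversal analogue of \cite[Appendix B]{bbegz} — using a barrier built from $r_\xi^2$ and the $L^\infty$-control coming from the generalized estimate of Lemma \ref{lem:L infty estimate in convex settting}; higher-order estimates and Evans--Krylov then give smoothness away from the singular locus. The delicate points here are (i) making the transversal estimate work uniformly up to, but not across, the toric divisors $F_i$, where $\Phi$ degenerates in the normal direction but the metric remains smooth as a metric on the orbifold-like charts of $Y_{reg}$, and (ii) checking the barrier has the right sign and growth; I expect these to be the technical heart of the argument, with everything else following formally from Theorem \ref{thm:MA for convex cone} and Theorem \ref{thm:variational convex}. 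Finally, the equivalence with K-stability in Theorem \ref{thm:main intro} is then obtained by combining this existence result with the already-cited implications: K-stability $\implies$ volume minimization holds for any klt polarized affine variety by \cite{C-S}, \cite[Remark 2.27]{l-x}, and the converse (existence $\implies$ K-stability) follows from uniqueness of the conical Calabi--Yau metric \cite[Prop 4.8]{d-sII} together with \cite[Thm 7.1]{C-S}.
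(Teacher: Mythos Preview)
Your plan is correct and follows the paper's approach closely: the reduction to Theorem \ref{thm:MA for convex cone} on the open orbit, the local boundedness of $r^2$ via the support-function estimate \eqref{eq:bound on f} and the toric embedding of Proposition \ref{prop:toric embedding}, and the transversal Laplacian estimate \`a la \cite[Appendix B]{bbegz} for regularity on $Y_{reg}$ are exactly the steps the paper takes. The only refinement relative to your outline is in the regularity step: the paper passes to an equivariant resolution $Y'\to Y$, builds the barrier $\Phi_B$ (tending to $-\infty$ along the exceptional locus) via a symplectic-reduction-type construction from an ample line bundle on $\overline{Y'}$ rather than directly from $r_\xi^2$, and runs the Aubin--Yau inequality on a perturbed family $(\theta'+\epsilon\omega_B+dd^c\varphi_{j,\epsilon})^n=e^{(n+1)(\psi_{+,j}-\psi_{-,j})}\omega_B^n$ of smooth transversal Monge--Amp\`ere equations (solved via the transversal Calabi--Yau theorem \cite{el}), with the uniform $L^\infty$-bound supplied by Lemma \ref{lem:L infty estimate in convex settting} on $\R^{m-1}$, before passing to the limit.
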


Indeed, if there exists a conical Calabi-Yau metric associated to
a Reeb vector $\xi,$ then it follows from \cite[Cor 4.2]{l-w-x}
that $(X,T,\xi)$ is K-stable for any torus $T$ containing $\xi$
in its Lie algebra. Conversely, if $(X,T,\xi)$ is $K-$stable, then
it follows from \cite[Thm 6.1]{c.s0} that $\xi$ minimizes the volume
functional on the space of normalized Reeb vectors in the Lie algebra
of the maximal torus $T_{m}.$ 
\begin{rem}
More precisely, in \cite[Thm 6.1]{c.s0} it is assumed that $Y$ is
an isolated Gorenstein singularity, but, as pointed out in \cite[Remark 2.27]{l-x},
the result holds for a general $\Q-$Gorenstein affine variety $Y$
with klt singularities (and thus, in particular, for any toric affine
$\Q-$Gorenstein variety, since any such variety has klt singularities
\cite{dai}). 
\end{rem}

The proof of Theorem \ref{thm:toric CY iff vol min} is divided into
two parts. First, in the next section, we deduce from Theorem \ref{thm:main intro}
that $\xi$ minimizes the volume functional iff there exists a conical
Calabi-Yau metric on $T_{\C}\Subset Y$ with locally bounded potential
$r^{2}.$ Finally, the regularity of $r^{2}$ on all of $Y_{reg}$
is established in Section \ref{sec:Regularity-of-the}.

\subsection{\label{subsec:Existence-of-a conical bounded}Existence of a conical
Calabi-Yau metric on $T_{\C}\Subset Y$ with locally bounded potential
$r^{2}$}

Let $\mbox{Log }$be the standard map from $T_{\C}$ onto $\R^{m}$
defined by 
\begin{equation}
\mbox{Log}:\,T_{\C}\rightarrow\R^{m},\,\,z\mapsto x:=(\log(|z_{1}|^{2},...,\log(|z_{n}|^{2})\label{eq:def of Log map}
\end{equation}
so that the compact torus $T$ acts transitively on its fibers. We
will refer to $x$ as the \emph{logarithmic real coordinates} on $T_{\C}.$ 

By Prop \ref{prop:Q Gore} we can write
\begin{equation}
i^{m^{2}}\Omega\wedge\bar{\Omega}=\text{Log}^{*}\left(e^{\left\langle l,x\right\rangle }dx\right)\wedge d\theta,\label{eq:formula for Omega in terms of l}
\end{equation}
 where $d\theta$ denotes the invariant measure on $T.$ Accordingly,
if $r$ is conical with respect to $\xi,$ then, writing $r^{2}=\text{Log}^{*}f,$
the Calabi-Yau equation on $T_{\C}\Subset Y$
\[
\left(dd^{c}(r^{2})\right)^{m}=i^{m^{2}}\Omega\wedge\bar{\Omega},\,\,\,dd^{c}(r^{2})>0
\]
is equivalent to the real MA-equation for a smooth strictly convex
function $f$ in Theorem \ref{thm:MA for convex cone}. Next, note
that in the toric setting the volume of $\xi$ defined by formula
\ref{eq:V xi general intro}, coincides with the functional $V(\xi)$
defined by formula \ref{eq:def of V convex setting} in the convex-geometric
setting (as follows from formula \ref{eq:V as Laplac transf}). Moreover,
the bound \ref{eq:bound on f} on $f$ translates into to the bound
\[
r^{2}\leq C\exp\text{Log}^{*}\left(\sup_{p\in P_{\xi}}\left\langle x,p\right\rangle \right)
\]
As we will be explained next this bound implies that $r^{2}$ is locally
bounded in a neighborhood of the vertex point $y_{0}$ of $Y.$ Indeed,
it will be enough to show that the rhs in the previous inequality
extends from $\C^{*m}$ to a continuous function on $Y.$ Now, since
$P_{\xi}$ is a compact and convex polyhedron the sup in the rhs in
the formula in question can be written as the maximum of the corresponding
vertices of $\partial P_{\xi}.$ Since the maximum of a finite number
of continuous functions is continuous it will thus be enough to show
that that for a fixed $p\in P_{\xi}$ the function $F_{p}$ on $Y$
defined by $\exp\text{Log}^{*}\left(\left\langle x,p\right\rangle \right)$
is continuous on $Y.$ But since $p\in\mathcal{C}^{*}$ and $\mathcal{C}^{*}$
is the convex hull of $\mathcal{C}^{*}\cap M$ there exists a finite
number of points $p_{i_{1}},...,p_{i_{L}}$ in $\mathcal{C}^{*}\cap M$
that such that
\[
p=a_{1}p_{i_{1}}+\cdots a_{L}p_{i_{L}}
\]
Hence, $F_{p}$ is the pull-back to $Y$ under the embedding in Prop
\ref{prop:toric embedding} (for $C$ sufficiently large) of the following
continuous function on $\C^{N}$ with coordinates $Z_{i_{1}},...,Z_{i_{L}}:$
\[
|Z_{i_{1}}|^{2a_{1}}\cdots|Z_{i_{L}}|^{2a_{L}}
\]
and thus continuous, as desired. 

\subsection{\label{subsec:Variational-construction toric}Intermezzo: A variational
construction of $r^{2}$ }

We next make a short digression to point out that $r^{2},$ whose
existence was established in the precious section, can be constructed
by minimizing the Ding functional $\mathcal{D}$ (but this fact will
not be used in the sequel). 

Denote by $\mathcal{H}(Y)$ the space of all functions $\Phi$ on
$Y$ such that $e^{\Phi}$ is a $\xi-$conical Kähler potential on
$Y.$ First recall that exists a functional $\mathcal{E}$ on $\mathcal{H}(Y)$
determined, up to an additive constant, by the property that along
any affine curve $\Phi_{t}$ in $\mathcal{H}(Y)$ 
\[
\frac{d\mathcal{E}(\Phi_{t})}{dt}=\frac{1}{mV(\xi)}\int_{Y^{*}/\R^{*}}\frac{d\Phi_{t}}{dt}(dd^{c}\Phi_{t})^{m-1}\wedge d^{c}\Phi_{t}
\]
where the basic form $\frac{d\Phi_{t}}{dt}(dd^{c}\Phi_{t})^{m-1}\wedge d^{c}\Phi_{t}$
on $Y$ has been identified with a top form on the compact space $Y^{*}/\R^{*}$
(compare Section \ref{subsec:Transversal-a-priori}) and $V(\xi)$
denotes the volume of $\xi$ (this functional is denoted by $I$ in
the appendix of \cite{d-sII}). More generally, we denote by $\mathcal{E}$
the smallest usc extension of the functional $\mathcal{E}$ to the
space $PSH(Y,\xi)$ of all $\xi-$equivariant psh functions $\Phi$
on $Y.$ Set 
\[
\mathcal{E}^{1}(Y):=\left\{ \Phi\in PSH(Y,\xi):\,\,\mathcal{E}(\Phi)>-\infty\right\} 
\]
Now consider the\emph{ Ding functional }on $\mathcal{E}^{1}(Y)$ defined
by
\[
\mathcal{D}(\Phi):=-\frac{1}{m}\log\int_{Y^{*}/\R^{*}}i_{-J\xi}e^{-m\Phi}i^{m^{2}}\Omega\wedge\bar{\Omega}-\mathcal{E}(\Phi)
\]
 where the basic $(m-1)-$form $i_{-J\xi}e^{-(m+1)\Phi}i^{m^{2}}\Omega\wedge\bar{\Omega}$
on $Y$ has been identified with a measure on $Y^{*}/\R^{*}$ (the
functional $\mathcal{D}$ essentially coincides with the Ding functional
appearing in \cite{d-sII,C-S,l-w-x}). Next, we specialize to the
toric setting and denote by $\mathcal{E}^{1}(Y)^{T}$ the subspace
of all $T-$invariant functions in $\mathcal{E}^{1}(Y),$ where $T$
denotes the maximal torus. Then it is not hard to see that, under
the Log map \ref{eq:def of Log map}, the space $\mathcal{E}^{1}(Y)^{T}$
corresponds to the space $\mathcal{E}^{1}(\R^{m})$ introduced in
Section \ref{subsec:Variational-construction-of}. Moreover, the Ding
functional $\mathcal{D}(\Phi)$ corresponds to the functional $\mathcal{D}(\Phi)$
in formula \ref{eq:Ding in convex setting}. Thus Theorem \ref{thm:variational convex}
yields a variational construction of the potential $r^{2}$ of the
conical Calabi-Yau metric on $Y.$ 

\section{\label{sec:Regularity-of-the}Regularity of the Calabi-Yau metric
on the regular locus of $Y$}

Let us first come back to to the general setting of a good $T-$action
on a $\Q-$Gorenstein affine variety $(Y,T)$ (recalled in Section
\ref{subsec:Background}). Let $r$ be locally bounded psh solution
to the Calabi-Yau equation \ref{eq:CY intro} on $Y,$ in the usual
sense of local pluripotential theory. As pointed out in \cite{l-w-x}
one would expect that this implies that $r$ is, in fact, smooth on
the regular locus $Y_{reg}.$ Here we will provide a proof in the
toric setting. In fact, the proof can be generalized to the non-toric
setting, but since this would require a detour into pluripotential
theory (see Remark \ref{rem:non-toric gener}) the development of
the necessary pluripotential theory is left for the future.

\subsection{The general setup}

Let $r^{2}$ be a locally bounded psh $\xi-$conical function on $Y$
which satisfies the following equation on $Y:$
\begin{equation}
\left(dd^{c}(r^{2})\right)^{m}=i^{m^{2}}\Omega\wedge\bar{\Omega},\label{eq:MA eq for r in proof}
\end{equation}
in the usual sense of pluripotential theory.
\begin{rem}
If there exists a locally bounded psh solution $r^{2}$ as above then
it follows that $Y$ has log terminal (klt) singularities. Indeed,
by local pluripotential theory the integral of $\left(dd^{c}(r^{2})\right)^{m}$
over any neighborhood $V_{0}$ of $y_{0}$ with compact closure is
finite. Hence, $i^{m^{2}}\Omega\wedge\bar{\Omega}$ gives finite volume
to $V_{0},$ which, by general principles, implies that $Y$ has klt
singularities \cite{ko}.
\end{rem}

In fact, in the toric setting, where $T$ is has rank $m,$ we already
know from Section \ref{subsec:Existence-of-a conical bounded} that
$r$ is smooth on $T_{\C}\Subset Y_{reg}$ and it will be enough to
consider the equation there. Take a $T-$equivariant smooth resolution
$Y^{'}$ of $Y$ (as constructed in the proof of Lemma \ref{lem:barrier}
below) and denote by $p$ the corresponding projection: 
\[
p:\,Y'\rightarrow Y
\]
and by $D'$ the corresponding discrepancy divisor, i.e. the effective
$p-$exceptional $\Q-$divisor defined by the relation
\begin{equation}
p^{*}K_{Y}=K_{Y'}+D'\label{eq:discrepenct}
\end{equation}
(since $Y$ has Kawamata Log Terminal (klt) singularities the divisor
$D'$ is subklt, i.e the coeffiecents of $D'$ are $<1$). We may
assume that the divisor $D'$ is $T-$invariant and has simple normal
crossings. Denote by $\mathcal{U}$ the Zariski open subset of $Y'$
defined by
\[
\mathcal{U}:=p^{*}Y_{reg}
\]
Since the Reeb vector field $\xi$ may be identified with an element
in the Lie algebra of $T$ and $T$ acts on $Y'$ we can identify
$\xi$ with a vector field on $Y'.$ We will use a prime to indicate
pullbacks of functions and forms to $Y'.$ Accordingly, the equation
\ref{eq:MA eq for r in proof} induces the following equation on $\mathcal{U}:$
\begin{equation}
\left(dd^{c}(r')^{2}\right)^{m}=i^{m^{2}}\Omega'\wedge\overline{\Omega'},\,\label{eq:MA eq for r prime}
\end{equation}
Let $\varphi$ be the function on $Y$ defined by
\[
r^{2}=f_{\xi}e^{\varphi},
\]
where $f_{\xi}$ is a fixed conical Kähler potential on $Y$ (as furnished
by Lemma \ref{lem:family of smooth conical}) and denote by $\theta$
the following semi-positive smooth form on $Y^{*}:$ 
\[
\theta:=dd^{c}\log f_{\xi}.
\]
 Note that the function $\varphi$ is invariant under both $\xi$
and $J\xi.$

\subsection{A transversal Kähler metric $\omega_{B}$ on $Y'-\pi^{-1}(y_{0})$
and the corresponding $\xi-$equivariant barrier $\Phi_{B}$ on $\mathcal{U}$}

Let us first recall some general terminology in the context of foliations.
Let $S$ be a manifold and $\mathcal{F}$ a foliation on $S$ (i.e.
an integrable subbundle of $TS).$ The sheaf $\Omega_{B}^{\cdot}$
of\emph{ basic forms} on $S$ is defined as follows. Given an open
subset $U\Subset S$ the space $\Omega_{B}^{\cdot}(U)$ is defined
as the space of all differential forms $\alpha$ on $U$ such that
\[
\mathcal{L}_{V}\alpha=0,\,\,\,i_{V}\alpha=0
\]
 for any local vector field $V$ tangent to $\mathcal{F}$ (the definition
is made so that if the the set theoretic quotient $B:=S/\mathcal{F}$
is a manifold, then $\Omega_{B}^{\cdot}$ is the pull-back to $S$
of the sheaf of smooth forms on $B).$ In particular, $\Omega_{B}^{1}$
is naturally isomorphic to the sheaf of sections of the dual of the
normal bundle $TS/\mathcal{F}$ of $\mathcal{F}.$ The exterior derivative
$d$ on $S$ preserves $\Omega_{B}^{\cdot}.$ 

Now consider the case when $S:=Y'-\pi^{-1}(y_{0})$ and $\mathcal{F}$
is the foliation generated by the commuting vector fields $\xi$ and
$J\xi.$ Since $T\mathcal{F}$ is closed under the complex structure
$J$ on $S$ there is an induced complex structure on $TS/\mathcal{F}.$
A closed real $(1,1)-$form $\omega_{B}$ on $S$ will be called a
\emph{transversal Kähler form }if 
\[
\omega_{B}>0\,\text{\,on\,\,}TS/\mathcal{F},
\]
 i.e. if the symmetric form $\omega(\cdot,J\cdot)$ is positive definite
on $TS/\mathcal{F}$ (note that, since by assumption, $\omega_{B}$
is basic it descends to the quotient $TS/\mathcal{F}).$ Given a point
$p\in S$ there exists, by the inverse function theorem, local holomorphic
coordinates $(z,w)\in\C^{n}\times\C$ on $S$ centered at $p$ and
such that the derivations with respect to the imaginary and real parts
of $w$ are given by $\xi$ and $-J\xi$ respectively. In particular,
\[
\xi^{1,0}w=1,\,\,\,\xi^{1,0}z_{i}=0,\,\,\,\xi^{1,0}:=-\frac{1}{2}(J\xi+i\xi)
\]
 We will say that $(z,w)$ are \emph{holomorphic coordinates adapted
to $\xi$ }and call $z$ for the corresponding \emph{transversally
holomorphic coordinates. }

In order to construct a transversal Kähler form $\omega_{B}$ on $Y'-\pi^{-1}(y_{0})$
and an associated $\xi-$equivariant potential $\Phi_{B}$ (in the
sense of formula \ref{eq:xi equi}) on $\mathcal{U}\Subset Y'-\pi^{-1}(y_{0})$
(to be used as a barrier in the transversal a priori estimates) we
start with the following
\begin{lem}
\label{lem:barrier}There exists a $T-$invariant Kähler form $\omega$
on $Y'$ and a smooth $T-$invariant function $\Phi_{\omega}$ on
$\mathcal{U}$ such that $\Phi_{\omega}\rightarrow-\infty$ at $\partial\mathcal{U}$
and 
\[
dd^{c}\Phi_{\omega}=\omega
\]
 on $\mathcal{U}.$ 
\end{lem}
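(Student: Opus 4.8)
The idea is to build $\omega$ and $\Phi_\omega$ on the toric resolution $Y'$ by a completely combinatorial construction on the fan, and then read off the barrier property from the asymptotics of a suitable convex function on $\mathbb{R}^m$. First I would explicitly construct the $T$-equivariant resolution $Y'\to Y$: take the fan $\Sigma$ consisting of the single cone $\mathcal{C}$ together with its faces, and subdivide it by a regular (i.e. nonsingular, projective over $Y$) fan $\Sigma'$; this is the standard toric resolution of singularities, and by equivariance the resolution $Y'=X(\Sigma')$ carries the induced $T_{\mathbb{C}}$-action, and its exceptional locus $p^{-1}(y_0)$ is the union of the torus-invariant divisors $D_\rho$ associated to the new rays $\rho\in\Sigma'(1)$ not lying in $\mathcal{C}(1)$. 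The open set $\mathcal{U}=p^{-1}(Y_{\mathrm{reg}})$ is itself toric, and $Y_{\mathrm{reg}}\supset T_{\mathbb{C}}$, so on the dense orbit everything reduces to real convex analysis via the $\mathrm{Log}$ map.

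**Constructing $\omega$.** On $Y'$, a $T$-invariant Kähler form that is positive over $Y$ corresponds to a strictly convex piecewise-linear support function $\psi$ on $\Sigma'$; by projectivity of the resolution over the affine base such a $\psi$ exists (equivalently, one can pull back an ample class and perturb). On the dense orbit this form is $dd^c$ of the convex function $u=\mathrm{Log}^*(\psi)$ — more precisely $\omega|_{T_{\mathbb{C}}}=dd^c(\mathrm{Log}^*g)$ where $g$ is a smooth strictly convex function on $\mathbb{R}^m$ whose gradient image is the (bounded, if the class is chosen appropriately, or unbounded otherwise) Newton-type polytope attached to $\psi$. The key point for the barrier is the boundary behaviour: a toric-invariant Kähler potential on a chart corresponding to a cone $\sigma'\in\Sigma'$ spanned by rays $\rho_1,\dots,\rho_m$ has, in the holomorphic coordinates $(w_1,\dots,w_m)$ on that chart, the shape $\sum c_j|w_j|^{2a_j}+(\text{smooth})$ with $a_j>0$, so its logarithm tends to $-\infty$ exactly along $\{w_j=0\}$, i.e. along the exceptional divisors. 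I would set $\Phi_\omega:=\log$ of the restriction to $\mathcal{U}$ of this potential, suitably globalized.

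**The barrier property.** To get a single globally defined $T$-invariant function $\Phi_\omega$ on $\mathcal{U}$ with $dd^c\Phi_\omega=\omega$ and $\Phi_\omega\to-\infty$ at $\partial\mathcal{U}=\mathcal{U}\cap\bigcup_\rho D_\rho$, I would choose $\omega$ in the linear-equivalence class $c_1$ of an effective divisor $E$ supported on the exceptional locus and meeting every component of $\partial\mathcal{U}$ (such $E$ exists: e.g. $E=-D'$-type combinations, or simply $\sum_{\rho\text{ exceptional}} m_\rho D_\rho$ with $m_\rho>0$, which is $p$-nef enough after subdivision); then $\omega=dd^c\Phi_\omega$ globally on $\mathcal{U}$ for $\Phi_\omega=\log\|s_E\|^2_{h}$ with $s_E$ the canonical section and $h$ a smooth metric of curvature $\omega$. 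Since $E$ meets each boundary component, $\|s_E\|\to 0$ there, giving $\Phi_\omega\to-\infty$ at $\partial\mathcal{U}$; $T$-invariance is automatic from the equivariant choices. The honest way to state this is: pick the Kähler form so that its cohomology class is represented by an effective $T$-divisor whose support is exactly $\mathrm{Supp}(p^{-1}(y_0))$, and take $\Phi_\omega$ to be (minus) the log of the norm of the defining section.

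**Main obstacle.** The real work is the existence of a Kähler form $\omega$ on $Y'$ — a smooth \emph{positive} $(1,1)$-form, not merely a class — that is genuinely a Kähler metric (positive everywhere on $Y'$, which is only quasi-projective and noncompact) rather than just nef or positive on $Y$; and simultaneously has a global potential $\Phi_\omega$ on the \emph{non-compact} open set $\mathcal{U}$ blowing up at the boundary. Over an affine base one has to take some care that the perturbation making $\psi$ strictly convex does not destroy positivity near infinity on the fibers of $p$; the clean fix is to first pick a strictly convex $\Sigma'$-linear $\psi_0$ (exists since $Y'$ is projective over the affine $Y$, hence $p$-ample classes exist and their restriction to a neighborhood of $p^{-1}(y_0)$ is actually ample), then add a small multiple of a global Kähler form on an ambient projective compactification to get honest positivity on all of $Y'$. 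Once $\omega$ is in hand, the barrier $\Phi_\omega$ and its divergence at $\partial\mathcal{U}$ are a routine consequence of the toric local normal form recalled just above, so I expect the proof to be short modulo invoking the standard equivariant resolution and Kleiman-type positivity facts for toric varieties projective over an affine base (references to \cite{c-l-s}).
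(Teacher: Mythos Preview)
Your overall strategy---produce a $p$-ample class on $Y'$, represent it by a smooth metric, and take the potential to be the log of the norm of a section vanishing along the exceptional locus---is the right one, and is essentially what the paper does. But your execution contains a genuine sign/effectivity error that would make the argument fail as written.

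You propose to choose the K\"ahler form $\omega$ so that its class $[\omega]$ equals $c_1(\mathcal{O}(E))$ for an \emph{effective} divisor $E=\sum_\rho m_\rho D_\rho$, $m_\rho>0$, supported on the exceptional locus, and then set $\Phi_\omega=\log\|s_E\|_h^2$ with $h$ of curvature $\omega$. This cannot work, for two reasons. First, an effective exceptional divisor on a resolution is never in a K\"ahler class: restricting to any compact exceptional curve $C$ gives $\int_C\omega=E\cdot C\le 0$ (strictly negative for curves contracted to a point), contradicting positivity of $\omega$. In your own example language, such an $E$ is never ``$p$-nef''; what is $p$-ample is $-E$ for suitable $m_\rho>0$. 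Second, even granting such an $h$, Poincar\'e--Lelong gives $dd^c\log\|s_E\|_h^2=[E]-\omega$, so on $\mathcal{U}$ you would get $dd^c\Phi_\omega=-\omega$, the wrong sign; flipping the sign of $\Phi_\omega$ then sends it to $+\infty$ at $\partial\mathcal{U}$ rather than $-\infty$. The fix is to work with $\mathcal{O}(-E)$: choose $E$ effective exceptional with $-E$ $p$-ample (hence ample over the affine base $Y$), put a positively curved metric on $\mathcal{O}(-E)$, and trivialize $\mathcal{O}(-E)|_{\mathcal{U}}$ by $s_E^{-1}$.

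The paper implements exactly this, but packages it via a projective compactification rather than toric combinatorics: embed $Y\hookrightarrow\mathbb{C}^N\subset\mathbb{P}^N$, take an equivariant resolution $\bar{Y'}\to\bar{Y}$ with exceptional support $E=\sum E_i$, and use the standard decomposition $\pi^*\mathcal{O}(1)=A+\sum a_iE_i$ with $A$ ample and $a_i>0$. The K\"ahler form is the curvature of a smooth metric $h_A$ on $A$, and since $\pi^*\mathcal{O}(1)$ is trivial over $Y'=\pi^{-1}(\mathbb{C}^N)$ (trivialized by the pullback section $s$), one sets $\Phi_\omega=-\log h(s,s)$ for the induced singular metric $h$ on $\pi^*\mathcal{O}(1)$; this has $dd^c\Phi_\omega=\omega+\sum a_i[E_i]$, hence equals $\omega$ on $\mathcal{U}$ and tends to $-\infty$ along $E$. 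This compactification trick also disposes of your ``main obstacle'' (honest positivity on the noncompact $Y'$) in one stroke, and has the bonus of working for any good torus action, not just the maximal-rank toric case.

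One further small correction: you identify $\partial\mathcal{U}$ with $p^{-1}(y_0)$, but $\mathcal{U}=p^{-1}(Y_{\mathrm{reg}})$, so $\partial\mathcal{U}=p^{-1}(Y_{\mathrm{sing}})$, which is in general strictly larger than the fiber over the vertex (e.g.\ for $\mathbb{C}^3/\mathbb{Z}_2$ with the action fixing a line). Your barrier must diverge along all of $p^{-1}(Y_{\mathrm{sing}})$, so the support of $E$ must contain the full exceptional locus, not just the central fiber.
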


\begin{proof}
Take an equivariant embedding of $(Y,T)$ into $\C^{N},$ so that
$T$ corresponds to a linear torus action on $\C^{N}.$ Denote by
$\bar{Y}$ the Zariski closure of $Y$ in $\P^{N}$ under the standard
embedding of $\C^{N}$ into $\P^{N}.$ There exists a $T-$equivariant
resolution $\pi:\,\bar{Y'}\rightarrow\bar{Y}$ such that the inverse
image of the singular locus of $\bar{Y}$ coincides with the support
of an effective divisor $E$ on $X$ ($\pi$ can be taken as a strong
functorial resolution, as explained in \cite[Paragraphs 8,9]{ko2}).
Denote by $E_{i}$ the irreducible components of $E.$ Since $\pi$
is relatively ample there exist positive numbers $a_{i}$ such that
the $\Q-$line bundle $A:=\pi^{*}\mathcal{O}(1)-\sum a_{i}E_{i}$
over $\bar{Y'}$ is ample \cite{ha}. In other words, we get a decomposition
as $\Q-$line bundles 
\begin{equation}
\pi^{*}\mathcal{O}(1)=A+\sum_{i}a_{i}E_{i}\label{eq:decompo of L prime}
\end{equation}
 with $A$ ample and $a_{i}>0.$ Fix a $T-$invariant metric $h_{A}$
on $A$ whose curvature form defines a Kähler metric $\omega$ on
$\bar{Y'}$ and denote by $h$ the corresponding singular metric on
$\pi^{*}\mathcal{O}(1),$ induced from the decomposition \ref{eq:decompo of L prime}.
Now set $Y':=\pi^{-1}(Y).$ Since the restriction of $\pi^{*}\mathcal{O}(1)$
to $Y'$ is trivial - trivialized by the section $s$ obtained by
pulling back the standard trivialization of\emph{ $\mathcal{O}(1)$}
over $\C^{N}$ - the function function $\Phi_{\omega}:=-\log h(s)$
has the required properties. 
\end{proof}
We can now construct $\omega_{B}$ and the barrier $\Phi_{B}$ by
mimicking the construction of a Kähler metric on the symplectic quotient
of a Symplectic Kähler manifold $(S,\omega):$ 
\begin{prop}
\label{prop:red}There exists a transversal basic Kähler metric $\omega_{B}$
on $Y'-\pi^{-1}(y_{0})$ and an $\xi-$equivariant function $\Phi_{B}$
on $\mathcal{U}$ such that $\Phi_{B}\rightarrow-\infty$ at $\partial\mathcal{U}$
and 
\[
dd^{c}\Phi_{B}=\omega_{B}
\]
 on $\mathcal{U}.$
\end{prop}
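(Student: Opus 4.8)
The plan is to construct $\omega_B$ and $\Phi_B$ by performing a K\"ahler reduction of the data $(\omega,\Phi_\omega)$ furnished by Lemma \ref{lem:barrier} with respect to the flow of $-J\xi$, which generates a (non-compact, $\R^*$-type) one-parameter group of automorphisms of $(Y',\omega)$ after averaging. First I would replace $\omega$ by its average over the compact torus $T$, which keeps it $T$-invariant, K\"ahler, and $\xi$-invariant, and correspondingly replace $\Phi_\omega$ by a $T$-invariant potential; since $\xi$ lies in the Lie algebra of $T$, both $\omega$ and $\Phi_\omega$ are then $\xi$-invariant on $\mathcal{U}$, and $\Phi_\omega$ still tends to $-\infty$ at $\partial\mathcal{U}$. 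Next, working on $S:=Y'-\pi^{-1}(y_0)$ with the foliation $\mathcal{F}$ generated by $\xi$ and $J\xi$, I would use the adapted holomorphic coordinates $(z,w)$ of the preamble to write, locally, $\Phi_\omega$ as a function of $z,\bar z$ and $u:=\operatorname{Re} w$ only (by $\xi$-invariance it does not depend on $\operatorname{Im} w$). The Legendre-type transform in the single real variable $u$: set $\mu:=\partial\Phi_\omega/\partial u$ (the moment map for the $\R$-action generated by $-J\xi$, which is strictly monotone in $u$ because $\omega>0$), fix the level set $\{\mu=0\}$, and let $\Phi_B$ be the restriction of $\Phi_\omega$ to that slice — more invariantly, $\Phi_B$ is obtained on $\mathcal U$ by solving $\mathcal L_{-J\xi}\Phi_B=2$, $\mathcal L_\xi\Phi_B=0$ after adjusting $\Phi_\omega$ additively along the $\R^*$-orbits so that its $-J\xi$-derivative is the constant $2$; equivalently one takes the $\xi$-equivariant representative in the affine class of $\Phi_\omega$ modulo basic functions.

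The form $\omega_B$ is then defined as the transversal part of $\omega$ at the chosen level set, i.e. the restriction of $\omega$ to the horizontal distribution $\{dw=0\}\cap T\{\mu=0\}$, pushed to a basic form via the local projection $(z,w)\mapsto z$. Concretely, in adapted coordinates one checks $\omega=dd^c\Phi_\omega$ decomposes as a piece involving $du\wedge d(\operatorname{Im}w)$ plus the transversal Hessian $dd^c_z\Phi_B$, and the latter is exactly $dd^c\Phi_B$ in the basic de Rham complex; positivity of $\omega$ on $TS/\mathcal F$ gives $\omega_B>0$ transversally, and closedness of $\omega_B$ is automatic since $d$ preserves $\Omega_B^\cdot$. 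One verifies $\omega_B$ is well-defined globally (independent of the adapted chart) because the slice $\{\mu=0\}$ is a global $T$-invariant hypersurface in $S$ — here the absence of a fixed point of $-J\xi$ on $S$ is what makes the reduction go through. The relation $dd^c\Phi_B=\omega_B$ on $\mathcal U$ and the boundary behaviour $\Phi_B\to-\infty$ at $\partial\mathcal U$ are then inherited directly from the corresponding properties of $\Phi_\omega$ on $\mathcal U$, since subtracting the $\xi$-orbit direction does not affect the behaviour near $\partial\mathcal U=p^{-1}(\operatorname{Sing}Y)$ (which is $\xi$-invariant and disjoint from $\pi^{-1}(y_0)$).

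The main obstacle I anticipate is checking that the reduction is globally consistent rather than merely local: one must show that the moment map $\mu$ for the $-J\xi$-action on $(Y',\omega)$ is well-defined and proper enough on $S$ to single out a global smooth $T$-invariant slice $\{\mu=0\}$, and that the resulting basic form does not depend on the choices; equivalently, that the $\R^*$-action generated by $-J\xi$ on $Y'-\pi^{-1}(y_0)$ is free (or at worst has finite stabilizers coming from the orbifold structure, which is harmless for the basic forms formalism) so that $TS/\mathcal F$ really is an honest normal bundle. In the toric case this is transparent from the $\R^*$-action described after Proposition \ref{prop:toric embedding} on weighted $\C^N$, where $-J\xi$ acts with weights $\langle\xi,p_i\rangle>0$ and hence freely away from the origin; one pulls this structure back through the $T$-equivariant resolution. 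I would therefore carry out the argument by first establishing freeness of the $\R^*$-action on $S$, then performing the reduction in adapted charts, then gluing, and finally reading off $dd^c\Phi_B=\omega_B$ and the barrier property from $\Phi_\omega$.
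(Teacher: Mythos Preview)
Your approach is essentially the same as the paper's: both construct $\omega_B$ and $\Phi_B$ by K\"ahler/symplectic reduction of $(\omega,\Phi_\omega)$ along the $\R^*$-action generated by $-J\xi$, taking a level set of the associated moment map as the slice and reading off the transversal form there (the paper cites the Burns--Guillemin computation for the local identity $\omega|_{M_\lambda}=\omega_B|_{M_\lambda}$).

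Two points where the paper is sharper than your sketch are worth flagging. First, you define the moment map as $\mu=\partial\Phi_\omega/\partial u$, but $\Phi_\omega$ only lives on $\mathcal U$, so this does not give a global Hamiltonian on $S=Y'-\pi^{-1}(y_0)$; the paper instead observes that $\omega$ is the curvature of a $T$-invariant Hermitian metric on the ample line bundle $A\to\bar{Y'}$, and defines $H$ globally as the $(-J\xi)$-derivative of the log of that metric --- this is what guarantees a compact smooth slice in all of $S$, not just over $\mathcal U$. Second, the level $\{\mu=0\}$ you fix has no reason to be nonempty or to meet every $\R^*$-orbit; the paper takes $\{H=\lambda\}$ for $\lambda$ large generic and uses monotonicity of $H$ along orbits to see it is diffeomorphic to $S/\R^*$. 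With these two adjustments your outline becomes exactly the paper's proof, including the final step where $\Phi_B\to-\infty$ at $\partial\mathcal U$ is deduced from boundedness of $\Phi_\omega-\lambda\Phi_B$ on the (compact) slice.
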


\begin{proof}
First observe that the Reeb vector field $\xi$ is \emph{Hamiltonian}
with respect to the symplectic form defined by $\omega$ in the previous
lemma. This equivalently means that there exists a function $H$ on
$Y'-\pi^{-1}(y_{0})$ such that 
\begin{equation}
\nabla H=-J\xi,\label{eq:gradient eq}
\end{equation}
 where $\nabla$ denotes the gradient with respect to the Kähler metric
$\omega.$ Indeed, the existence of the Hamiltonian function $H$
follows from the fact that $\omega$ is the curvature of a Hermitian
metric on a line bundle $A$ over $Y'$ and that $\xi$ lifts to $A,$
preserving the Hermitian metric (and hence $H$ can be explicitly
defined as the derivative of the logarithm of the metric on $A$ along
$-J\xi).$ Since $-J\xi$ generates the $\R^{*}-$action on $Y'-\pi^{-1}(y_{0})$
the equation \ref{eq:gradient eq} implies that $H$ is strictly increasing
along the $\R^{*}-$action. Consequently, for $\lambda$ a sufficiently
large generic positive number,
\[
M_{\lambda}:=\left\{ H=\lambda\right\} 
\]
 is a compact smooth submanifold of $Y'-\pi^{-1}(y_{0}),$ which is
diffeomorphic to $\left(Y'-\pi^{-1}(y_{0})\right)/\R^{*}.$ Denote
by $\pi_{\lambda}$ the submersion from $Y'-\pi^{-1}(y_{0})$ onto
$M_{\lambda}$ (whose fibers are the $\R^{*}-$orbits) and by $i_{\lambda}$
the embedding of $M_{\lambda}$ into $Y'-\pi^{-1}(y_{0}):$ 
\[
\pi_{\lambda}:\,\,Y'-\pi^{-1}(y_{0})\rightarrow M_{\lambda},\,\,\,i_{\lambda}M_{\lambda}\hookrightarrow Y'-\pi^{-1}(y_{0})
\]
Now, given a point $p$ in $M_{\lambda}$ fix holomorphic coordinates
$(z,w)\in\C^{n}\times\C$ adapted to $\xi$ and defined on a neighborhood
$U_{p}$ of $p$ in $\mathcal{U}.$ Consider the following $\xi-$invariant
function on $M_{\lambda}\cap\mathcal{U}:$
\begin{equation}
\Phi_{\lambda}:=i_{\lambda}^{*}(\Phi-\lambda\Im w)\label{eq:def of Phi lambda}
\end{equation}
and the following $\xi-$invariant function on $U_{p}:$
\[
\Psi_{B}:=\pi_{\lambda}^{*}\Phi+\lambda\Im w
\]
In fact, this yields a well-defined function in a neighborhood of
$M_{\lambda}\cap\mathcal{U}.$ Indeed, if $(z',w')$ are $\xi-$adapted
holomorphic coordinates on $U_{p'}$ for $p'\in M_{\lambda},$ then
$w'-w=f(z)$ for some holomorphic function $f(z)$ of $z,$ defined
on on $U_{p}\cap U_{p'}$ (since $\xi^{1,0}(w'-w)=0).$ As a consequence,
\[
\Im w-i_{\lambda}^{*}(\Im w)=\Im w'-i_{\lambda}^{*}(\Im w'),
\]
 showing that $\Psi_{B}$ is well-defined in a neighborhood of $M_{\lambda}\cap\mathcal{U}.$
Next, observe that the Lie derivative of $\Psi_{B}$ along the generator
of the $\R^{*}-$action is given by
\begin{equation}
\mathcal{L}_{-JB}\Psi_{B}=\lambda.\label{eq:Lie deriv lambda}
\end{equation}
 Indeed, by construction, $\Psi_{\lambda}$ is $\R^{*}-$invariant
and $\mathcal{L}_{-JB}\Im w=1.$ As a consequence, the function $\Psi_{B}$
admits a unique smooth extension to all of $Y'-\pi^{-1}(y_{0})$ preserving
the property \ref{eq:Lie deriv lambda}. This means that the function
\begin{equation}
\Phi_{B}:=2\lambda^{-1}\Psi_{B}\label{eq:def of Phi B}
\end{equation}
is $\xi-$invariant and satisfies $\mathcal{L}_{-JB}\Phi_{B}=2.$
Next, observe that the two-form 
\[
\omega_{B}:=dd^{c}\Phi_{B}
\]
defines a transversal Kähler metric on $\mathcal{U}.$ This follows
from the fact that, locally, $\omega_{B}$ can be expressed as the
pull-back of a local Kähler metric $\omega_{\lambda}$ on $\C^{n},$
under the local map defined by the transversal holomorphic coordinates
$z.$ Indeed, a direct calculation, using the chain rule, reveals
that the restrictions of $\omega$ and $\omega_{B}$ to the hypersurface
$M_{\lambda}$ coincide. In fact, this is the same local computation
used in the construction of Kähler metrics on symplectic quotients,
where the local function $\Phi_{\lambda}$ appears as the local Kähler
potential of the reduced Kähler metric $\omega_{\lambda}$ (see the
proof of \cite[Formula 9.3]{b-g} and note that our local coordinate
$w$ corresponds to the logarithm of the coordinate on $\C^{*}$ used
in \cite{b-g}).

Finally, we note that $\omega_{B},$ originally defined on $\mathcal{U},$
extends to a transversal Kähler metric on $Y'-\pi^{-1}(y_{0}).$ Indeed,
this is shown by, locally, replacing the Kähler potential $\Psi$
of $\omega,$ used in the previous argument, with $\lambda^{-1}$
times any local $\xi-$invariant Kähler potential $\Phi_{U_{p}}$
for $\omega$ on a given open subset $U_{p}$ for $p\in M'.$ The
proof is now concluded by observing that $\Phi_{B}\rightarrow-\infty$
at $\partial\mathcal{U},$ as follows from the fact that $\lambda^{-1}\Phi-\Phi_{B}$
is bounded on $M_{\lambda}\cap\mathcal{U}$ (since $\lambda^{-1}\Phi-\Phi_{B}=\Im w-i_{\lambda}^{*}(\Im w)$
on $\mathcal{U}\cap U_{p}$ for any given $p\in M_{\lambda}$ and
using that $M_{\lambda}$ is compact. 
\end{proof}

\subsection{\label{subsec:Transversal-a-priori}Transversal a priori estimates}

We are now in a position to apply a transversal version of the a Laplacian
estimates in \cite[Appendix B]{bbegz} to the compact manifold 
\[
M':=\left(Y'-\pi^{-1}(y_{0})\right)/\R^{*},
\]
 where $\R^{*}$ denotes, as before, the $\R^{*}-$action generated
by $-J\xi.$ Denote by $\pi_{M'}$ the corresponding submersion from
$Y'-\pi^{-1}(y_{0})$ onto $M'.$ The Reeb vector field descends to
$M'$ and thus induces a foliation $\mathcal{F}$ on $M'.$ Using
the terminology recalled in the previous section we denote by $\Omega_{B}^{\cdot}$
the corresponding sheaf of\emph{ }basic forms on $M'.$ Hence, a form
$\alpha$ on $M'$ is basic iff
\[
\mathcal{L}_{\xi}\alpha=0,\,\,\,i_{\xi}\alpha=0.
\]
Moreover, the pull-back $\pi_{M'}^{*}$ induces a one-to-one correspondence
between basic forms on $Y'-\pi^{-1}(y_{0})$ and $M'.$ Next note
that the complex structure $J$ on $Y'$ induces a complex structure
on the normal bundle of $\mathcal{F}$ in $M',$ i.e. on $TM'/\R\xi$
(since the pullback of $TM'/\R\xi$ to $Y'-\pi^{-1}(y_{0})$ is naturally
isomorphic to the normal bundle of the foliation spanned by $\{\xi,J\xi\}$
in $Y'-\pi^{-1}(y_{0})$ and the action of $\R^{*}$ on $Y'-\pi^{-1}(y_{0})$
preserves $J).$ By duality we thus obtain a complex structure on
$\Omega_{B}^{1}(M')$ that we shall denote by the same symbol $J.$
Accordingly, we decompose 
\[
\Omega_{B}^{1}\otimes\C=\Omega_{B}^{1,0}\oplus\Omega_{B}^{0,1},
\]
as a sum of the eigenspaces corresponding to the eigenvalues $i$
and $-i$ of $J,$ respectively. The corresponding decomposition of
the exterior derivative yields
\[
d=:\partial_{B}+\bar{\partial}_{B},\,\,\,\partial_{B}:\,\Omega_{B}^{p,q}:=\left(\Omega_{B}^{1,0}\right)^{\wedge p}\wedge\left(\Omega_{B}^{0,1}\right)^{\wedge q}\rightarrow\Omega_{B}^{p+1,q}.
\]
 The pullback $\pi_{M'}^{*}$ induces a one-to-one correspondence
between basic $(p,q)-$forms on $Y'-\pi^{-1}(y_{0})$ and the sheaf
$\Omega_{B}^{p,q}$ on $M'.$ Moreover, under this correspondence
the operators $\partial_{B}$ and $\bar{\partial}_{B}$ on $M'$ correspond
to the ordinary operators $\partial$ and $\bar{\partial}$ on $Y'.$

If $z$ is a local transversally holomorphic coordinate centered at
$p\in Y'-\pi^{-1}(y_{0}),$ then $z_{i}$ descends to a local $\xi-$invariant
function on $M'$ that we shall call a\emph{ transversally holomorphic
coordinate on $M'.$ }We note the following elementary
\begin{lem}
\label{lem:basic}Let $\alpha$ be a basic $(1,1)-$form on $Y'-\pi^{-1}(y_{0}).$
Then the corresponding basic form on $M'$ has the property that,
locally, 
\[
\alpha=\pi_{\C^{n}}^{*}\tilde{\alpha},
\]
 where $\pi_{\C^{n}}$ denotes the local map to $\C^{n}$ defined
by $z$ and $\tilde{\alpha}$ is a local closed $(1,1)-$form on $\C^{n}.$
Conversely, any form $\alpha$ on $M'$ which locally can be expressed
in terms of $\tilde{\alpha},$ as above, is basic.
\end{lem}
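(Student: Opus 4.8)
The plan is to verify both implications by a direct computation in holomorphic coordinates adapted to $\xi$, working on $Y'-\pi^{-1}(y_{0})$ and then transporting the conclusion to $M'$ via the pull-back isomorphism $\pi_{M'}^{*}$ on basic forms. First I would record the reformulation: since $\xi,J\xi$ span the same complex rank-two subbundle as $\xi^{1,0},\overline{\xi^{1,0}}$, and both contraction and Lie differentiation are linear in the vector field, a $(1,1)$-form $\alpha$ is basic for the foliation generated by $\xi,J\xi$ if and only if
\[
i_{\xi^{1,0}}\alpha=i_{\overline{\xi^{1,0}}}\alpha=0,\qquad \mathcal{L}_{\xi^{1,0}}\alpha=\mathcal{L}_{\overline{\xi^{1,0}}}\alpha=0 .
\]

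Next, fix $p\in Y'-\pi^{-1}(y_{0})$ and choose holomorphic coordinates $(z,w)\in\C^{n}\times\C$ adapted to $\xi$ as in the discussion preceding the lemma; being a $(1,0)$-field that annihilates $z_{i},\bar z_{i},\bar w$ and sends $w$ to $1$, the field $\xi^{1,0}$ is just $\partial/\partial w$. Writing a $(1,1)$-form $\alpha$ in the coframe $dz_{i},d\bar z_{j},dw,d\bar w$, the conditions $i_{\partial/\partial w}\alpha=i_{\partial/\partial\bar w}\alpha=0$ annihilate every term containing $dw$ or $d\bar w$, so $\alpha=\sum_{i,j}a_{i\bar j}\,dz_{i}\wedge d\bar z_{j}$. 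Because $dz_{i},d\bar z_{j}$ are closed and killed by $i_{\partial/\partial w}$ and $i_{\partial/\partial\bar w}$, Cartan's formula gives $\mathcal{L}_{\partial/\partial w}\alpha=\sum_{i,j}(\partial_{w}a_{i\bar j})\,dz_{i}\wedge d\bar z_{j}$, and similarly with $\bar w$; hence the remaining conditions force $\partial_{w}a_{i\bar j}=\partial_{\bar w}a_{i\bar j}=0$, i.e. the $a_{i\bar j}$ depend only on $(z,\bar z)$. Thus $\alpha=\pi_{\C^{n}}^{*}\tilde\alpha$ with $\tilde\alpha:=\sum_{i,j}a_{i\bar j}(z,\bar z)\,dz_{i}\wedge d\bar z_{j}$ a $(1,1)$-form on an open subset of $\C^{n}$, and if moreover $d\alpha=0$ then $\pi_{\C^{n}}^{*}d\tilde\alpha=d\alpha=0$, whence $d\tilde\alpha=0$ since $\pi_{\C^{n}}$ is a submersion. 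Passing to $M'$ is harmless, since $\xi$ and $J\xi$ annihilate the $z_{i}$, so the latter descend to transversally holomorphic coordinates on $M'$ with $\pi_{\C^{n}}$ descending accordingly.

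For the converse, if $\alpha=\pi_{\C^{n}}^{*}\tilde\alpha$ on a neighbourhood, then because $z_{i}$ is killed by $\xi^{1,0}$ and $\overline{\xi^{1,0}}$ the vector fields $\xi$ and $J\xi$ are $\pi_{\C^{n}}$-related to the zero field on $\C^{n}$; consequently $i_{\xi}\alpha=0$ and, by Cartan's formula, $\mathcal{L}_{\xi}\alpha=0$, and likewise for $J\xi$, so $\alpha$ is basic. I do not expect any genuine obstacle here: the statement is essentially linear algebra in the fibre directions, and the only point requiring attention is the bookkeeping between $Y'-\pi^{-1}(y_{0})$, the quotient $M'$, and the overlapping $\xi$-adapted charts --- for which one uses, exactly as in the proof of Proposition \ref{prop:red}, that on an overlap two such charts $(z,w)$, $(z',w')$ are related by a holomorphic change of coordinates with $z'$ a function of $z$ alone and $w'-w$ holomorphic in $z$.
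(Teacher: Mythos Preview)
Your proposal is correct and follows essentially the same approach as the paper's own proof: both arguments work in $\xi$-adapted local coordinates $(z,w)$, use the contraction condition to eliminate the $dw,d\bar w$ components, and then use the Lie-derivative condition to force the coefficients $a_{i\bar j}$ to be independent of the leafwise variable, with the converse being immediate from the definitions. The only cosmetic difference is that the paper carries out the computation directly on $M'$ (using the real coordinates $(z,\Im w)$ and the single generator $\xi$ of the foliation there), whereas you work upstairs on $Y'-\pi^{-1}(y_{0})$ with the complexified pair $\xi^{1,0},\overline{\xi^{1,0}}$ and then descend; you are also a bit more explicit about the closedness of $\tilde\alpha$, which the paper leaves implicit.
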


\begin{proof}
First note that the local functions $z_{1},...z_{n}$ descend to define
local functions on $M'.$ Moreover, the corresponding local differentials
$dz_{1},...,dz_{N}$ span the sheaf $\Omega_{B}^{1,0}$ (since, $\Omega_{B}^{1,0}$
has rank $n$ and, by construction, $dz_{i}\in\Omega_{B}^{1,0}).$
Hence, if $\alpha$ is basic we can locally express $\alpha=\sum f_{ij}dz_{i}\wedge d\bar{z}_{j}.$
Using $\mathcal{L}_{\xi}\alpha=0=\mathcal{L}_{\xi}dz_{j}$ it follows
that $\mathcal{L}_{\xi}f_{ij}=0.$ Since $(z,\Im w)$ yield local
holomorphic coordinates on $M'$ and $\xi$ is the derivation with
respect to the imaginary part $\Im w$ of $w$ this shows that $\alpha$
can be expressed in terms of a form $\tilde{\alpha}$ as in the statement
of the lemma. The converse statement follows directly from the definitions. 
\end{proof}
We will say that a basic function $\psi:\,M'\rightarrow[-\infty,\infty[$
is\emph{ quasi-psh} if the function $\psi$ can be expressed as the
restriction to $M'$ of a local quasi-psh function on $Y',$ when
$M'$ is identified with the hypersurface $\{f_{\xi}=1\}$ in $Y'.$ 
\begin{lem}
\label{lem:psi plus minus}There exist two basic quasi-psh functions
$\Psi_{\pm}$ on $M'$ which are smooth on $\mathcal{U}$ such that
\[
(f_{\xi})^{-m}i^{m^{2}}\Omega'\wedge\bar{\Omega'})(\xi,J\xi,\cdot)=e^{m(\Psi_{+}-\Psi_{-})}\omega_{B}^{n}
\]
on $M'.$ Moreover, there exists a constant $A$ such that 
\[
\frac{i}{2\pi}\partial_{B}\bar{\partial}_{B}\Psi_{\pm}\geq-A\omega_{B}
\]
 holds in the sense of currents on $M'.$ 
\end{lem}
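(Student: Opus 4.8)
The plan is to show that the density appearing in the lemma equals, up to a factor that is \emph{smooth on the compact manifold} $M'$, the density of $i^{m^{2}}\Omega'\wedge\bar\Omega'$ with respect to a smooth transversal volume form, whose only singularities are located along the image of the discrepancy divisor $D'$ and are of klt type. The functions $\Psi_{\pm}$ then arise by splitting the logarithm of this density into its ``quasi-psh'' and ``quasi-plurisuperharmonic'' parts.

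First I would check that the $(2m-2)$-form $\alpha:=\big((f_{\xi})^{-m}\,i^{m^{2}}\Omega'\wedge\bar\Omega'\big)(\xi,J\xi,\cdot)$ is basic for the foliation $\mathcal F=\langle\xi,J\xi\rangle$, hence descends to $M'$. Indeed, $i^{m^{2}}\Omega'\wedge\bar\Omega'$ is $T$-invariant and, thanks to the normalization $\mathcal L_{\xi}\Omega=im\Omega$ forced by the $\Q$-Gorenstein hypothesis, it is homogeneous of degree $2m$ under the $\R^{*}$-action generated by $-J\xi$; since $f_{\xi}=r_{\xi}^{2}$ is $\xi$-radial it is $\xi$-invariant and homogeneous of degree $2$, so $(f_{\xi})^{-m}\,i^{m^{2}}\Omega'\wedge\bar\Omega'$ is both $T$-invariant and $\R^{*}$-invariant. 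As $\xi$ and $J\xi$ commute, contraction with these vector fields sends an $\{\xi,-J\xi\}$-invariant form to a basic form; the same applies to $\omega_{B}^{n}$ by the construction of $\omega_{B}$ in Proposition \ref{prop:red}, and $\omega_{B}^{n}$ is moreover a nowhere-degenerate transversal volume form on all of $Y'-\pi^{-1}(y_{0})$. Consequently $\rho:=\alpha/\omega_{B}^{n}$ is a well-defined positive basic function on $\mathcal U\cap M'$, and the lemma reduces to writing $\log\rho$ on $M'$ as $m(\Psi_{+}-\Psi_{-})$ with $\Psi_{\pm}$ as asserted.

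Next I would identify the singularities of $\rho$ along $\partial\mathcal U$. Since $D'$ is $p$-exceptional it is supported in $Y'\setminus\mathcal U$, and because $\Omega$ is non-vanishing on $Y_{reg}$, relation \ref{eq:discrepenct} shows that, viewed as a (multivalued) section of $K_{Y'}$, one has $\operatorname{div}(\Omega')=-D'$. Fixing $T$-invariant Hermitian metrics $h_{i}$ on $\mathcal O_{Y'}(D'_{i})$ with canonical sections $s_{i}$, one gets near $\partial\mathcal U$ that $i^{m^{2}}\Omega'\wedge\bar\Omega'=\prod_{i}|s_{i}|_{h_{i}}^{2a_{i}}\,\nu$, with $\nu$ a smooth non-vanishing volume form and $a_{i}$ the coefficients of $-D'$, so that $c_{i}:=-a_{i}<1$ (klt, automatic for toric affine varieties). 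Dividing by $(f_{\xi})^{m}$ and $\omega_{B}^{n}$---both smooth, with $f_{\xi}>0$ and $\omega_{B}^{n}$ transversally non-degenerate on $Y'-\pi^{-1}(y_{0})$---and descending to $M'$ (where $|s_{i}|_{h_{i}}^{2}$ becomes a function, $h_{i}$ being $T$-invariant) yields
\[
\log\rho \;=\; g \;-\; \sum_{i} c_{i}\log|s_{i}|_{h_{i}}^{2}\qquad\text{on }\mathcal U\cap M',
\]
where $g$ extends to a smooth function on the compact manifold $M'$. Each summand $c_{i}\log|s_{i}|_{h_{i}}^{2}$ has transversal $dd^{c}$ equal to $c_{i}[D'_{i}]-c_{i}\Theta_{i}$ for a smooth basic curvature form $\Theta_{i}$, hence is quasi-psh when $c_{i}\geq0$ and quasi-plurisuperharmonic when $c_{i}<0$; grouping by sign gives $\sum_{i}c_{i}\log|s_{i}|_{h_{i}}^{2}=P_{+}-P_{-}$ with $P_{\pm}$ basic, quasi-psh, smooth on $\mathcal U$, and $\tfrac{i}{2\pi}\partial_{B}\bar\partial_{B}P_{\pm}\geq-A\omega_{B}$ on $M'$ (the $\Theta_{i}$ being smooth on compact $M'$).

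Finally I would set $\Psi_{+}:=\tfrac1m\big(g+P_{-}\big)$ and $\Psi_{-}:=\tfrac1m P_{+}$. Then $m(\Psi_{+}-\Psi_{-})=g-(P_{+}-P_{-})=\log\rho$, which is exactly the claimed identity after exponentiating; $\Psi_{\pm}$ are basic, smooth on $\mathcal U$ since $D'$ misses $\mathcal U$, and $\tfrac{i}{2\pi}\partial_{B}\bar\partial_{B}\Psi_{\pm}\geq-A'\omega_{B}$ for a larger constant, adding the smooth $g$ only shifting the lower bound. The main obstacle is not conceptual but organizational: one must carefully track the equivariance so that all objects descend to the compact reduction $M'$, and must verify that the ``smooth part'' $g$ is genuinely smooth on all of $M'$---for which one uses that $Y'$ is smooth, that $f_{\xi}'$ and $\omega_{B}$ are smooth with $f_{\xi}'>0$ and $\omega_{B}$ transversally non-degenerate on $Y'-\pi^{-1}(y_{0})$, and that $\Omega'$ has neither zeros nor poles away from $D'$. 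Beyond the standard identification of the Calabi--Yau measure with the discrepancy divisor, carried out here transversally, no new idea is required.
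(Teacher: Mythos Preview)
Your approach is the same as the paper's: decompose the discrepancy divisor by sign, write the density against a smooth transversal reference as a product of $\|s_i\|^{2c_i}$, and declare $\Psi_\pm$ to be (up to smooth error) the logarithms of the two groups. The paper writes $D'=-D_++D_-$ with $D_\pm$ effective, obtains $i^{m^2}\Omega'\wedge\bar\Omega'=\|s_+\|^2\|s_-\|^{-2}dV$, and sets $\Psi_\pm$ so that $\Psi_\pm-\log\|s_\pm\|^2_{|M'}\in C^\infty(M')$; this is exactly your $P_\mp$ and $g$, up to the factor $1/m$.

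There is, however, one step where your justification is not quite right and the paper supplies the missing argument. You assert that each $c_i\log|s_i|^2_{h_i}$ has \emph{transversal} $dd^c$ equal to $c_i[D'_i]-c_i\Theta_i$ with $\Theta_i$ a ``smooth basic curvature form''. Two things fail here: the curvature of a merely $T$-invariant Hermitian metric on $\mathcal O(D'_i)\to Y'$ is $\xi$-invariant but has no reason to satisfy $i_\xi\Theta_i=0$ or $i_{J\xi}\Theta_i=0$, so it is not basic; and restriction to the real hypersurface $\{f_\xi=1\}$ does not commute with $dd^c$ for non-holomorphic potentials, so you cannot simply pull back the Poincar\'e--Lelong identity. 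The paper closes this gap by a local computation in adapted coordinates $(z,w)$: writing $\|s_\pm\|^2=|F_\pm(z,w)|^2e^{-\phi_\pm}$ with $F_\pm$ holomorphic and $\phi_\pm$ smooth, $T$-invariance forces $\partial_\theta F_\pm=i\lambda_\pm F_\pm$, so after replacing $F_\pm$ by $F_\pm e^{-\lambda_\pm w}$ one may take $F_\pm$ to be a function of $z$ alone. Then $\log|F_\pm(z)|^2$ is genuinely basic and transversally psh, so $\partial_B\bar\partial_B\log\|s_\pm\|^2_{|M'}\geq -\partial_B\bar\partial_B\phi_\pm\geq -C\omega_B$, the last inequality by smoothness and compactness of $M'$. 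Once you insert this twist, your proof is complete and coincides with the paper's.
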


\begin{proof}
Decompose the discrepancy divisor $D'$ in formula \ref{eq:discrepenct}
as $D'=-D_{+}+D_{-}$ where $D_{\pm}$ are effective $T-$invariant
$\Q-$divisors (since $Y$ has klt singularities we can assumate that
the coefficents of $D_{-}$ are $<1$). It follows from formula \ref{eq:discrepenct}
that there exists a $T-$invariant volume form $dV$ on $Y',$ (multivalued)
$T-$equivairant holomorphic sections $s_{\pm}$ of the $\Q-$line
bundles over $Y'$ defined by $D_{\pm}$ and $T-$invariant metrics
$\left\Vert \cdot\right\Vert $ on $D_{\pm}$ such that 
\[
i^{m^{2}}\Omega'\wedge\bar{\Omega'})=\left\Vert s_{+}\right\Vert ^{2}\left\Vert s_{-}\right\Vert ^{-2}dV
\]
Now identify $M'$ with the hypersurface $\{f_{\xi}=1\}$ in $Y'.$
Then the formula stated in the lemma holds for some functions $\Psi_{\pm}$
such that $\Psi_{\pm}-\log\left\Vert s_{\pm}\right\Vert _{|M'}^{2}\in C^{\infty}(M').$
All that remains is thus to verify that there exists a constant $C$
such that 
\begin{equation}
\frac{i}{2\pi}\partial_{B}\bar{\partial}_{B}\log\left\Vert s_{\pm}\right\Vert _{|M'}^{2}\geq-C\omega_{B}\label{eq:pf of lemma Psi plus minus}
\end{equation}
 on $M'.$ To this end fix a point $p$ in $M$ and take holomorphic
coordinates $(z,w)$ adapted to $\xi$ and centered at $p.$ We can
then locally express the $T-$invariant functions $\left\Vert s_{\pm}\right\Vert ^{2}$
as $\left\Vert s_{\pm}\right\Vert ^{2}=|F_{\pm}(z,w)|^{2}e^{-\phi_{\pm}(z,w)}$
for local holomorphic functions $F_{\pm}$ and smooth $T-$invariant
functions $\phi_{\pm}.$ Since $F_{\pm}(z,w)$ are $T-$invariant
there exist $\lambda_{\pm}\in\R$ such that $\partial F_{\pm}/\partial\theta=i\lambda_{\pm}F_{\pm},$
where $\theta$ denotes the imaginary part of $w.$ After perhaps
replacing $F_{\pm}$ with $F_{\pm}e^{-\lambda w}$ we may thus assume
that $F_{\pm}$ is $\xi-$invariant and hence its restriction to $M'$
defines a local basic function on $M,$ which is annihilated by $\bar{\partial}_{B}.$
Hence, locally, we have
\[
\frac{i}{2\pi}\partial_{B}\bar{\partial}_{B}\log\left\Vert s_{\pm}\right\Vert _{|M'}^{2}\geq-\frac{i}{2\pi}\partial_{B}\bar{\partial}_{B}\phi_{\pm}
\]
 and since $\omega_{B}$ is transversely Kähler on $M'$ and $M'$
is compact this proves the inequality \ref{eq:pf of lemma Psi plus minus}. 
\end{proof}
Next, we observe that the function $\varphi'$ satisfies the following
``transversal Monge-Ampère equation'' on $M'\cap\mathcal{U}:$
\begin{lem}
\label{lem:transv eq}A conical smooth function $r^{2}$ satisfies
the equation \ref{eq:MA eq for r prime} on $Y_{reg}$ iff $\varphi'$
satisfies the following equation on $M'\cap\mathcal{U}$
\[
\left(\theta'+\frac{i}{2\pi}\partial_{B}\bar{\partial}_{B}\varphi'\right)^{n}=e^{-(n+1)\varphi'}e^{(n+1)(\Psi_{+}-\Psi_{-})}\omega_{B}^{n}
\]
where $\Psi_{\pm}$ are the functions appearing in the previous lemma.
\end{lem}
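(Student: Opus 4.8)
The plan is to reduce both sides of \ref{eq:MA eq for r prime} to basic forms on $M'$ by a pointwise cone computation in local holomorphic coordinates $(z,w)=(z_{1},\dots,z_{n},w)$ adapted to $\xi$, in which $-J\xi$ and $\xi$ are the real and imaginary part derivations of $w$. Put $\Phi:=\log r^{2}=\log f_{\xi}+\varphi$; conicality of $r^{2}$ means precisely that $\Phi$ is $\xi$-equivariant, $\mathcal{L}_{\xi}\Phi=0$ and $\mathcal{L}_{-J\xi}\Phi=2$, so locally $\Phi=w+\bar w+\psi(z,\bar z)$ with $\psi$ a function of $z$ alone. Hence $\partial\Phi=dw+\partial\psi$, while $dd^{c}\Phi=\tfrac{i}{2\pi}\partial\bar\partial\psi$ is a transversal (basic) $(1,1)$-form; since $\log f_{\xi}$ is $\xi$-equivariant too, $dd^{c}\Phi=\theta+\tfrac{i}{2\pi}\partial\bar\partial\varphi$, which after pulling up to $Y'$ corresponds on $M'$ to $\theta'+\tfrac{i}{2\pi}\partial_{B}\bar\partial_{B}\varphi'$ by Lemma \ref{lem:basic}; write $\beta$ for this form. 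Then
\[
dd^{c}e^{\Phi}=e^{\Phi}\bigl(\tfrac{i}{2\pi}\partial\Phi\wedge\bar\partial\Phi+dd^{c}\Phi\bigr),
\]
and since $(\partial\Phi\wedge\bar\partial\Phi)^{2}=0$ and $\beta^{n+1}=0$ (as $\beta$ is transversal), raising to the power $m=n+1$ kills all but the cross term:
\[
\bigl(dd^{c}r^{2}\bigr)^{m}=(n+1)\,e^{(n+1)\Phi}\,\tfrac{i}{2\pi}\partial\Phi\wedge\bar\partial\Phi\wedge\beta^{n}.
\]

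Next I would use that \ref{eq:MA eq for r prime} is an identity of $\xi$-invariant $(m,m)$-forms on $\mathcal{U}$ which moreover transform the same way under the $\R^{*}$-action (both scale like $(r^{2})^{m}$), so it is equivalent to the identity of basic $2n$-forms on $M'\cap\mathcal{U}$ obtained by evaluating both sides on the Reeb directions $(\xi,J\xi,\cdot)$ and restricting to the slice $\{f_{\xi}=1\}=M'$; no information is lost because $\mathcal{U}$ is the union of the $\R^{*}$-orbits through $M'\cap\mathcal{U}$, each meeting $M'$ exactly once. Since $\xi$ and $J\xi$ have vanishing $z$-components in adapted coordinates, $\beta$ annihilates both, so interior multiplication by $\xi$ and $J\xi$ acts only on the factor $\tfrac{i}{2\pi}\partial\Phi\wedge\bar\partial\Phi$; a direct computation using $\mathcal{L}_{-J\xi}\Phi=2$, $\mathcal{L}_{\xi}\Phi=0$ shows that $\bigl(\tfrac{i}{2\pi}\partial\Phi\wedge\bar\partial\Phi\bigr)(\xi,J\xi)$ equals a non-zero constant depending only on the normalization of $dd^{c}$. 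Hence the left side becomes on $M'\cap\mathcal{U}$ a non-zero constant times $e^{(n+1)\Phi}\beta^{n}=e^{(n+1)\varphi'}\bigl(\theta'+\tfrac{i}{2\pi}\partial_{B}\bar\partial_{B}\varphi'\bigr)^{n}$, using $e^{(n+1)\Phi}=f_{\xi}^{n+1}e^{(n+1)\varphi}$ and $f_{\xi}\equiv1$ on $M'$, while by Lemma \ref{lem:psi plus minus} the right side restricts to $e^{(n+1)(\Psi_{+}-\Psi_{-})}\omega_{B}^{n}$.

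Equating the two and dividing by $e^{(n+1)\varphi'}$ then yields exactly
\[
\bigl(\theta'+\tfrac{i}{2\pi}\partial_{B}\bar\partial_{B}\varphi'\bigr)^{n}=e^{-(n+1)\varphi'}e^{(n+1)(\Psi_{+}-\Psi_{-})}\omega_{B}^{n}
\]
on $M'\cap\mathcal{U}$, the residual universal constant being absorbed into the additive normalization of $\Psi_{\pm}$, which Lemma \ref{lem:psi plus minus} leaves free; reading the chain backwards, together with the fact that $\mathcal{U}$ is swept out by the $\R^{*}$-orbits through $M'\cap\mathcal{U}$, gives the converse. The underlying algebra --- that $\bigl(dd^{c}e^{\Phi}\bigr)^{m}$ collapses to $(n+1)e^{(n+1)\Phi}\tfrac{i}{2\pi}\partial\Phi\wedge\bar\partial\Phi\wedge\beta^{n}$ because $\beta^{n+1}=0$ and $(\partial\Phi\wedge\bar\partial\Phi)^{2}=0$ --- is elementary; the one genuinely delicate point is the reduction step, namely checking that evaluating on the Reeb directions and restricting to $\{f_{\xi}=1\}$ is a faithful operation on the relevant homogeneous $\xi$-invariant forms, and tracking all normalization constants so that no spurious factor survives in the final equation.
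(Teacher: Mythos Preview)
Your argument is correct and follows essentially the same route as the paper's proof: write $\Phi=\log r^{2}$, expand $dd^{c}e^{\Phi}=e^{\Phi}(dd^{c}\Phi+d\Phi\wedge d^{c}\Phi)$, use $(dd^{c}\Phi)^{n+1}=0$ and $(d\Phi\wedge d^{c}\Phi)^{2}=0$ to isolate the cross term, then contract with $\xi$ and $-J\xi$ in adapted coordinates $(z,w)$ to pass to basic forms on $M'$, invoking Lemma~\ref{lem:psi plus minus} for the right-hand side and the $\R^{*}$-sweep for the converse. The paper organizes the same computation by writing $\Phi=\phi(z)+2\Im w$ and observing that $(dd^{c}\Phi)^{n}\wedge d\Phi\wedge d^{c}\Phi=(d_{z}d_{z}^{c}\phi)^{n}\wedge d(\Im w)\wedge d^{c}(\Im w)$, which is your identity $(\tfrac{i}{2\pi}\partial\Phi\wedge\bar\partial\Phi)(\xi,J\xi)=\text{const}$ rephrased; the residual normalization constant is likewise left implicit in the paper.
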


\begin{proof}
Setting $\Phi:=\log r^{2}$ gives $dd^{2}(r^{2})=e^{\Phi}(dd^{c}\Phi+d\Phi\wedge d^{c}\Phi)$
and hence the equation \ref{eq:MA eq for r prime} is equivalent to
\begin{equation}
(dd^{c}\Phi)^{n}\wedge d\Phi\wedge d^{c}\Phi=e^{-(n+1)\Phi}i^{m^{2}}\Omega'\wedge\bar{\Omega'}\label{eq:formula in pf transv eq}
\end{equation}
Next fix local holomorphic coordinates $(z,w)$ on $\mathcal{U}$
adapted to $\xi.$ Note that the local function $\phi:=\Phi-2\Im w$
is basic, i.e. satisfies $\xi\phi=(J\xi)\phi=0,$ using that $r^{2}$
is conical with respect to $\xi.$ Hence, we can identify $\phi$
with a function of $z$ and write $\phi=\phi(z),$ abusing notation
slightly. Since $dd^{c}\Im w=0$ we can thus locally express
\[
(dd^{c}\Phi)^{n}\wedge d\Phi\wedge d^{c}\Phi=\left(d_{z}d_{z}^{c}\phi(z)\right)^{n}\wedge d(\Im w)\wedge d^{c}(\Im w)=\left(dd^{c}\Phi\right)^{n}\wedge d(\Im w)\wedge d^{c}(\Im w)
\]
As a consequence, contracting the equation \ref{eq:MA eq for r in proof}
with first $-J\xi$ and then $\xi$ yields 
\[
(dd^{c}\Phi)^{n}=(d_{z}d_{z}^{c}\Phi)^{n}=e^{-(n+1)\varphi}((f_{\xi})^{-2m}i^{m^{2}}\Omega\wedge\bar{\Omega})(\xi,J\xi,\cdot)
\]
 on $\mathcal{U}.$ Since $dd^{c}\Phi=\theta+dd^{c}\varphi$ on $Y$
the previous argument reveals that $\theta$ is basic on $\mathcal{U}$
and that the equation \ref{eq:MA eq for r in proof} implies the transversal
equation stated in the lemma. The converse statement follows in a
similar way, using that the measure appearing in the rhs of formula
\ref{eq:formula in pf transv eq} is invariant under both $\xi$ and
$J\xi.$ 
\end{proof}
\begin{rem}
More generally, the previous lemma holds if $\phi'$ is merely assumed
bounded (and $\theta'-$psh) if one defines the basic current $\left(\theta_{B}'+\frac{i}{2\pi}\partial_{B}\bar{\partial}_{B}\varphi'\right)^{n}$
on $M'$ to be the one corresponding by pull-back to the $(n,n)-$current
defined in the usual sense of local pluripotential theory on $Y'$
(i.e. in the local sense of Bedford-Taylor). 
\end{rem}

Setting

\[
\text{tr}_{\omega_{B}}\alpha:=n\frac{\alpha\wedge\omega_{B}^{n-1}}{\omega_{B}^{n}},
\]
for a given basic $(1,1)-$form $\alpha,$ we have the following transversal
generalization of the Aubin-Yau Laplacian inequality in Kähler geometry
(in Siu's form, as stated in \cite[Appendix B]{bbegz}):
\begin{lem}
\label{lem:siu}Let $\omega$ be a Kähler form on a neighborhood of
$M'$ and denote by $\kappa$ the sup over $M'$ of the absolute value
of the holomorphic bisectional curvatures of $\omega.$ If $\widetilde{\omega}{}_{B}$
is a transversal Kähler form on $M',$ then
\[
\text{tr}_{\omega_{B}}\left(\frac{i}{2\pi}\partial_{B}\bar{\partial}_{B}\log(\text{tr}_{\omega_{B}}\widetilde{\omega}{}_{B})\right)\geq-\frac{\text{tr}_{\omega_{B}}(\text{Ric \ensuremath{\widetilde{\omega}{}_{B})}}}{\text{tr}_{\omega_{B}}\ensuremath{\omega'}_{B}}-\kappa\text{tr}_{\widetilde{\omega}_{B}}\omega_{B}
\]
\end{lem}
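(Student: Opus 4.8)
The plan is to reduce this transversal Aubin--Yau--Siu inequality to its classical counterpart in K\"ahler geometry by working in the transversally holomorphic coordinates $z$ furnished by Lemma~\ref{lem:basic}. Recall that near any point $p \in M'$ the basic $(1,1)$-forms $\omega_B$ and $\widetilde{\omega}_B$ are pullbacks under the local submersion $\pi_{\C^n}$ of genuine K\"ahler forms $\tilde\omega_B$ and $\tilde{\widetilde{\omega}}_B$ on a domain in $\C^n$, and the operators $\partial_B,\bar\partial_B$ correspond to the usual $\partial,\bar\partial$ on $\C^n$. Likewise the ambient K\"ahler form $\omega$ restricted to $M_\lambda$ agrees with $\omega_B$ (this is exactly the local computation in the proof of Proposition~\ref{prop:red}), so the bisectional curvatures of $\omega$ bound those of the local model metric $\tilde\omega_B$ on $\C^n$. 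Consequently every quantity in the asserted inequality --- the traces $\mathrm{tr}_{\omega_B}$, $\mathrm{tr}_{\widetilde\omega_B}$, the transversal Ricci form $\mathrm{Ric}\,\widetilde\omega_B$, and $\partial_B\bar\partial_B\log(\mathrm{tr}_{\omega_B}\widetilde\omega_B)$ --- is the pullback of the corresponding object computed on $\C^n$ from $\tilde\omega_B$ and $\tilde{\widetilde\omega}_B$.

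The key steps, in order, are: first, fix $p\in M'$, choose $\xi$-adapted holomorphic coordinates $(z,w)$ centered at $p$, and invoke Lemma~\ref{lem:basic} to write $\omega_B = \pi_{\C^n}^*\tilde\omega_B$ and $\widetilde\omega_B = \pi_{\C^n}^*\tilde{\widetilde\omega}_B$ for local K\"ahler forms on an open set $U\subset\C^n$. Second, observe that all the differential-geometric operators in the statement are "transversal", i.e. they only see the $z$-directions and commute with $\pi_{\C^n}^*$; this is where one checks that $\mathrm{tr}_{\omega_B}\alpha$, $\mathrm{Ric}\,\widetilde\omega_B$, and $\mathrm{tr}_{\omega_B}(\tfrac{i}{2\pi}\partial_B\bar\partial_B\log(\cdots))$ are pullbacks of the corresponding $\C^n$-quantities built from $\tilde\omega_B,\tilde{\widetilde\omega}_B$. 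Third, apply the classical Aubin--Yau inequality in Siu's form, exactly as stated in \cite[Appendix B]{bbegz}, to the pair $(\tilde\omega_B,\tilde{\widetilde\omega}_B)$ on $U$, with the curvature constant taken to be $\kappa$, the sup over $M'$ of the absolute values of the holomorphic bisectional curvatures of $\omega$ --- using the identification $\omega|_{M_\lambda}=\omega_B$ to legitimately bound the curvature of $\tilde\omega_B$ by $\kappa$. Fourth, pull the resulting pointwise inequality back by $\pi_{\C^n}^*$ to obtain the desired inequality at $p$; since $p$ was arbitrary this gives the inequality on all of $M'$ (and, for the part of $M'$ outside $\mathcal{U}$, the inequality is interpreted in the sense of currents, which follows by the usual regularization since each side is a pullback of a classically-defined object).

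The main obstacle is bookkeeping rather than conceptual: one must verify carefully that the transversal curvature term genuinely coincides with an honest K\"ahler curvature on $\C^n$ and that the curvature bound $\kappa$ transfers correctly. The subtlety is that $\omega$ is a K\"ahler form on $Y'$ (hence on a neighborhood of $M'$), but its \emph{transversal} part $\omega_B$, i.e. its restriction to $TM'/\R\xi$, is what controls the model metric $\tilde\omega_B$; the reconciliation is precisely the computation in Proposition~\ref{prop:red} showing $\omega$ and $\omega_B$ restrict to the same form on the slice $M_\lambda$, so the bisectional curvatures of the ambient $\omega$ control those of $\tilde\omega_B$ at that slice. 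Once this identification is in hand, the inequality is a verbatim transcription of \cite[Appendix B]{bbegz}, and the compactness of $M'$ ensures $\kappa<\infty$ so that the constant in the statement is finite.
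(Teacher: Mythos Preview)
Your proposal is correct and follows essentially the same route as the paper's own proof: invoke Lemma~\ref{lem:basic} to locally identify the basic forms $\omega_B,\widetilde\omega_B$ with genuine K\"ahler forms on $\C^n$, observe that all the relevant operators ($\partial_B\bar\partial_B$, traces, Ricci) are pullbacks of their classical counterparts, and then apply the purely local Aubin--Yau--Siu inequality from \cite[Appendix B]{bbegz}. The paper records the same refinement you note about the curvature constant, namely that $\kappa$ may in fact be taken to be a bound on the \emph{transversal} bisectional curvatures of $\omega_B$ rather than of the ambient $\omega$.
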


\begin{proof}
Thanks to Lemma \ref{lem:basic} this follows from the usual Laplacian
estimate in Kähler geometry (as stated in \cite[Appendix B]{bbegz}),
since the proof of the latter inequality is purely local. In fact,
this shows that $\kappa$ can, more precisely, be taken to be a lower
bound on the ``transversal'' bisectional curvatures of $\omega_{B}.$ 
\end{proof}
With these preparations in place we are now ready to repeat the argument
in \cite[Appendix B]{bbegz}. Set
\[
\psi_{+}:=\Psi_{+},\,\,\,\psi_{-}:=\Psi_{-}+\varphi',
\]
The functions $\psi_{\pm}$ still satisfy an inequality as in Lemma
\ref{lem:psi plus minus}, since $\theta'$ is smooth and basic. The
equation appearing in Lemma \ref{lem:transv eq} can thus be expressed
as 
\[
\left(\theta'+\frac{i}{2\pi}\partial_{B}\bar{\partial}_{B}\varphi'\right)^{n}=e^{(n+1)(\psi_{+}-\psi_{-})}\omega_{B}^{n}.
\]
Now take two sequences of smooth functions $\psi_{\pm,j}$ on $M'$
decreasing to $\psi_{\pm},$ respectively and such that 
\begin{equation}
\frac{i}{2\pi}\partial_{B}\bar{\partial}_{B}\psi_{\pm,j}\geq-C\omega_{B},\label{eq:lower bound ddc psi j}
\end{equation}
 on $M'$ (the existence of such sequences follows, for example, from
the regularization procedure introduced in\cite{be0}, adapted to
the present setting, by solving transversal Monge-{}-Ampère equation
on $(M',\omega_{B})$ involving a large parameter $\beta).$ 

Next, fix $\epsilon>0$ and consider the following equation on $M'$
where $\omega$ is the Kähler form in Lemma \ref{lem:barrier}:
\begin{equation}
\left(\theta'+\epsilon\omega_{B}+\frac{i}{2\pi}\partial_{B}\bar{\partial}_{B}\varphi_{j,\epsilon}\right)^{n}=e^{(n+1)(\psi_{+,j}-\psi_{-,j})}\omega_{B}^{n},\label{eq:transv MA eq with eps}
\end{equation}
 for a smooth $\xi-$invariant function $\varphi_{j,\epsilon},$ satisfying
\[
\theta'+\epsilon\omega_{B}+\frac{i}{2\pi}\partial_{B}\bar{\partial}_{B}\varphi_{j,\epsilon}>0
\]
By the transversal generalization of the Calabi-Yau theorem in \cite{el}
such a function exists and is uniquely determined modulo an additive
constant that we fix by requiring that the sup of $\varphi_{j,\epsilon}$
over $M'$ vanishes. Using the $\R^{*}-$action generated by $-J\xi$
we can identify $\varphi_{j,\epsilon}$ with a smooth function on
$\mathcal{U}$ which is invariant under both $\xi$ and $J\xi.$ 
\begin{lem}
\label{lem:L infty estimate toric with eps}There exists a uniform
constant $C$ such that 
\[
\left\Vert \varphi_{j,\epsilon}\right\Vert _{L^{\infty}(M')}\leq C
\]
\end{lem}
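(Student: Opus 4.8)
The plan is to establish the uniform $L^\infty$ bound on $\varphi_{j,\epsilon}$ by reducing the transversal Monge--Ampère equation \eqref{eq:transv MA eq with eps} on the compact transversal space $M'$ to a genuine (inhomogeneous) real Monge--Ampère equation on $\R^{m-1}$ and then invoking the global $L^\infty$ estimate already proved in Lemma \ref{lem:L infty estimate in convex settting}. Concretely, pulling the equation back under $\pi_{M'}^*$ to $Y'-\pi^{-1}(y_0)$, and further pulling back to the open orbit $T_\C$, the basic potentials $\theta' + \epsilon\omega_B + \tfrac{i}{2\pi}\partial_B\bar\partial_B\varphi_{j,\epsilon}$ become, by Lemma \ref{lem:basic}, the Hessians of convex functions in the transversally holomorphic coordinates $z$, i.e. (after passing to logarithmic real coordinates on the torus) they are ordinary real Monge--Ampère measures of a convex function $\phi_{j,\epsilon}$ on $\R^{m-1}\cong W/\R\xi$. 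The right-hand side $e^{(n+1)(\psi_{+,j}-\psi_{-,j})}\omega_B^n$ is, after this reduction, a measure $\mu_{j,\epsilon}$ on $\R^{m-1}$; the point of Lemma \ref{lem:psi plus minus} and its consequence \eqref{eq:lower bound ddc psi j} together with the klt hypothesis is that this measure has exponential decay with a uniform rate, i.e. $\mu_{j,\epsilon}\leq A e^{-|s|/A}\, ds$ for a constant $A$ independent of $j$ and $\epsilon$, exactly as in the decay estimate \eqref{eq:exp decay}. Here the subklt condition on the coefficients of $D_-$ guarantees that the singular weights $\|s_-\|^{-2}$ are locally integrable and, more importantly, produce only a uniformly bounded contribution to $\int |s|^q \mu_{j,\epsilon}$.

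The key steps, in order, are: (i) identify, via the $\R^*$-action and the Log map on $T_\C$, the $\xi$-invariant function $\varphi_{j,\epsilon}$ with a convex function $\phi_{j,\epsilon}$ on $\R^{m-1}$ whose gradient image is (uniformly in $j,\epsilon$) contained in a fixed bounded convex set $P$ — here one uses that $\theta'$ is a fixed smooth basic form and $\epsilon\omega_B$ a fixed perturbation, so the moment images stay inside $Q_\xi + B(0,\epsilon_0)$ for some fixed $\epsilon_0$; (ii) rewrite \eqref{eq:transv MA eq with eps} as $MA(\phi_{j,\epsilon}) = \mu_{j,\epsilon}$ with $\overline{(\partial\phi_{j,\epsilon})(\R^{m-1})}\subset P$; (iii) establish the uniform exponential decay $\mu_{j,\epsilon}\leq A e^{-|s|/A}ds$, using that the $\psi_{\pm,j}$ decrease to $\psi_\pm$ (so are uniformly bounded above) and that $\omega_B^n$ expressed in logarithmic coordinates decays exponentially because $0$ lies in the interior of the relevant moment polytope; (iv) conclude $\int_{\R^{m-1}}|s|\,\mu_{j,\epsilon}$ and $\int_{\R^{m-1}}|s|^q\,\mu_{j,\epsilon}$ are bounded uniformly in $j,\epsilon$; (v) apply Lemma \ref{lem:L infty estimate in convex settting} with $q>m-1$ to get $\sup_{\R^{m-1}}|\phi_{j,\epsilon}-\phi_P|\leq C$, and hence, since $\phi_P$ is a fixed function and the normalization $\sup_{M'}\varphi_{j,\epsilon}=0$ matches the normalization in the lemma up to a bounded shift, obtain $\|\varphi_{j,\epsilon}\|_{L^\infty(M')}\leq C$.

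The main obstacle I expect is step (iii), the \emph{uniform} exponential decay of the measure $\mu_{j,\epsilon}$. One must check that the barrier $\epsilon\omega_B$ does not degrade the decay rate as $\epsilon\to 0$ (it does not, since it only shrinks the support of the gradient), that the regularizations $\psi_{\pm,j}$ contribute bounds independent of $j$ (clear, since they decrease to $\psi_\pm$ which are quasi-psh with the fixed lower bound in Lemma \ref{lem:psi plus minus}, hence uniformly bounded above on the compact $M'$), and — most delicately — that the singular part $\|s_-\|^{-2}$ coming from the klt divisor $D_-$ is integrable against $|s|^q$ with a uniform bound; this is where one uses that the coefficients of $D_-$ are strictly less than $1$, translated into the statement that the corresponding exponents in the logarithmic coordinates are strictly less than those forced by $e^{\langle l, x\rangle}$, so that the product still decays exponentially. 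Once this uniform decay is in hand, the rest is a direct application of the already-established Lemma \ref{lem:L infty estimate in convex settting}.
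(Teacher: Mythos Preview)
Your overall strategy---pull the transversal equation back to the open torus, pass to logarithmic real coordinates on $\R^{m-1}\cong W/\R\xi$, and invoke Lemma~\ref{lem:L infty estimate in convex settting}---is exactly the paper's. But there is a genuine gap in step~(v), and it is in fact where the paper spends almost all of its effort.

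The function $\varphi_{j,\epsilon}$ is a \emph{relative} potential: under the Log map it does \emph{not} become the convex function $\phi_{j,\epsilon}$ to which Lemma~\ref{lem:L infty estimate in convex settting} applies, but rather the difference
\[
\varphi_{j,\epsilon}=\phi_{j,\epsilon}-(f+\epsilon g),
\]
where $f$ and $g$ are the convex potentials on $\R^{m-1}$ of the fixed basic forms $\theta'$ and $\omega_B$. Moreover the gradient image of $\phi_{j,\epsilon}$ is not merely \emph{contained} in a fixed $P$ but is \emph{equal} to the Minkowski sum $P_\epsilon=P_f+\epsilon P_g$; Lemma~\ref{lem:L infty estimate in convex settting} needs equality. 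That lemma then yields (after the sup-normalization) a uniform bound on $\phi_{j,\epsilon}-\phi_{P_\epsilon}$, and since $\phi_{P_\epsilon}=\phi_{P_f}+\epsilon\phi_{P_g}$, the remaining task is to show
\[
\|f-\phi_{P_f}\|_{L^\infty(\R^{m-1})}<\infty,\qquad \|g-\phi_{P_g}\|_{L^\infty(\R^{m-1})}<\infty.
\]
Your proposal does not address either inequality; the phrase ``since $\phi_P$ is a fixed function'' does not bridge this gap. The first bound is easy (the quotient $e^{f+t}/e^{\phi_{P_\xi}}$ extends continuously to the compact $Y/\R^*$). The second is the substantive point: $g$ arises from the symplectic-reduction construction of $\omega_B$ in Proposition~\ref{prop:red}, and the paper shows, via an explicit Legendre-transform computation, that $g^*(p)=g_A^*(p,1)$ where $g_A$ is the convex potential of the ample toric metric on the projective compactification $\bar{Y'}$. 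Since $g_A^*$ is bounded on the moment polytope $P_A$, this gives $\|g-\phi_{P_g}\|_{L^\infty}\leq\|g_A^*\|_{L^\infty(P_A)}<\infty$ and simultaneously identifies $P_g=P_A\cap\{\langle\xi,\cdot\rangle=1\}$.

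By contrast, the issue you flag as the ``main obstacle''---uniform exponential decay of the right-hand side measure---is essentially immediate on the open torus: there $D'$ has no support, the $\psi_{\pm,j}$ are smooth decreasing approximations of smooth functions, and the paper simply records that the limiting measure corresponds to $i^{m^2}r^{-2m}\Omega\wedge\bar\Omega$ and has the decay~\eqref{eq:exp decay}. So your diagnosis of where the difficulty lies is inverted: the measure estimate is routine, while the comparison of the reference potentials $f,g$ with their support functions is the heart of the argument and is missing from your proposal.
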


\begin{proof}
In the toric setting this follows from Lemma \ref{lem:L infty estimate in convex settting}.
To see this first note that if $T$ is taken to be the compact maximal
torus acting on the toric variety $Y,$ then $Y'$ is a non-singular
toric variety, which yields a toric resolution $Y'\rightarrow Y.$
We can thus identify $T_{\C}$ with an open subset of $Y'.$ Proceeding
as in the proof of Theorem \ref{thm:MA for convex cone} we have a
fibration 
\begin{equation}
\text{Log:\,\,}T_{\C}/\left(\R\xi+\R J\xi\right)\rightarrow\R^{n+1}/\R\xi\simeq\R^{n}\label{eq:log quotient}
\end{equation}
Indeed, we can take local $\xi-$adapted holomorphic coordinates $(z,w)$
on $T_{\C}$ as in Lemma \ref{lem:basic}, which have the property
that 
\[
s:=(\log|z_{1}|^{2},...,\log|z_{n}|^{2})\in\R^{n}
\]
 coincides with $s,$ appearing in formula \ref{eq:x maps to}. Under
this fibration we can express $\theta_{B}'$ and $\omega_{B}$ on
$T_{\C}/\left(\R\xi+\R J\xi\right)$ in terms of smooth convex functions
$f$ and $g$ on $\R^{n}$ (with coordinates $s):$
\[
\theta'=d_{B}d_{B}^{c}\text{Log}^{*}f,\,\,\,\omega_{B}=d_{B}d_{B}^{c}\text{Log}^{*}g
\]
 As a consequence, 
\[
\theta'+\epsilon\omega_{B}=d_{B}d_{B}^{c}\text{Log}^{*}(f+\epsilon g)
\]
As will be shown below the closure of $\left(\nabla(f+\epsilon g)\right)(\R^{n})$
is a bounded convex body in $\R^{n}$ of the form 
\[
P_{\epsilon}:=P_{f}+\epsilon P_{g},
\]
 where $P_{f}$ and $P_{g}$ are bounded convex polytopes arising
as the closures of $(\nabla f)(\R^{n})$ and $\left(\nabla g\right)(\R^{n}),$
respectively. We can thus express 
\[
\varphi_{j,\epsilon}=\phi_{j,\epsilon}-(f+\epsilon g),
\]
 for a smooth convex function $\phi_{j,\epsilon}$ on $\R^{n}$ satisfying
\[
(\det\nabla_{s}^{2})(\phi_{j,\epsilon})ds=\mu,\,\,\,\overline{\nabla\phi_{j,\epsilon}(\R^{n})}=P_{\epsilon},
\]
where the measure $\mu$ on $\R^{n}$ is the volume form on $\R^{n}$
corresponding to the basic form $i^{m^{2}}r^{-2m}\Omega\wedge\bar{\Omega}$
under the fibration \ref{eq:log quotient} and thus has exponential
decay (by formula \ref{eq:exp decay}). Hence, by the $L^{\infty}-$estimate
in Lemma \ref{lem:L infty estimate in convex settting} 
\[
\left\Vert \phi_{j,\epsilon}-\phi_{P_{\epsilon}}-\sup_{\R^{n}}(\phi_{j,\epsilon}-\phi_{P_{\epsilon}})\right\Vert _{L^{\infty}(\R^{n})}\leq C'
\]
 for a uniform constant $C'$ (recall that the support function of
a convex set $P$ is denoted by $\phi_{P}).$ Note that 
\[
\phi_{P_{\epsilon}}=\phi_{P_{f}}+\epsilon\phi_{P_{g}}
\]
 since, in general, the supporting function of a Minkowski sum of
convex sets is the sum of the supporting functions. Since we have
assumed that $\sup_{\R^{n}}(\phi_{j,\epsilon}-(f+\epsilon g))=0,$
this means that in order to conclude that $\left\Vert \phi_{j,\epsilon}-(f+\epsilon g)\right\Vert _{L^{\infty}(\R^{n})}$
is uniformly bounded, all that remains is to verify that

\begin{equation}
\left\Vert \phi_{P_{f}}-f\right\Vert _{L^{\infty}(\R^{n})}\leq C_{f},\,\,\,\left\Vert \phi_{P_{g}}-g\right\Vert _{L^{\infty}(\R^{n})}\leq C_{g}.\label{eq:bounds Cf Cg}
\end{equation}
To this end first recall that, by construction, $\exp(f(s)+t)$ is
the convex function on $\R^{m}$ corresponding to a $\xi-$conical
Kähler potential on $Y.$ Moreover, as explained in Section \ref{subsec:Existence-of-a conical bounded},
the function $\exp\phi_{P_{\xi}}(x)$ on $\R^{n}$ (which may be expressed
as $\exp(\phi_{Q_{\xi}}(s)+t),$ where $Q_{\xi}$ is defined below
formula \ref{eq:def of Q}) corresponds to a $\xi-$conical continuous
function on $Y.$ Hence, $\exp(f(s)+t)/\exp(\phi_{Q_{\xi}}(s)+t)$
extends to a continuous function on the compact set $Y/\R^{*}$ and
is thus bounded. It follows that $f-\phi_{Q_{\xi}}$ is bounded on
$\R^{n}$ and hence the first bound in formula \ref{eq:bounds Cf Cg}
holds with $P_{f}=Q_{\xi}.$ 

To prove the second bound in formula \ref{eq:bounds Cf Cg} recall
that $\omega_{B}$ is defined as the basic form in Prop \ref{prop:red}
constructed from the $T-$invariant Kähler form $\omega$ on $Y'$
in Lemma \ref{lem:barrier}. In turn, in the toric setting $\omega$
is the restriction to $Y'$ of the curvature form of a toric smooth
metric $h_{A}$ on a toric line bundle $A\rightarrow\bar{Y'}$ over
toric projective manifold. To simplify the notation we will assume
that $\lambda=1$ in formula \ref{eq:def of Phi B}, but the general
case is shown in essentially the same way, up to a scaling. By standard
projective toric geometry $h_{A}$ corresponds to a convex function
$g_{A}(s,t)$ on $\R^{m}$ such that $\overline{\nabla g_{A}(\R^{n})}=P_{A},$
where $P_{A}$ is a bounded convex polytope in $\R^{m}:$ 
\[
\text{Log}^{*}g_{A}=\Phi:=-\log h_{A}(s_{0},s_{0}),
\]
 where $s_{0}$ is the standard invariant trivializing holomorphic
section of $A$ over $T_{\C}\Subset Y'.$ Moreover, the Legendre-Fenchel
transform $g_{A}^{*}$ of $g_{A}$ is bounded on $P_{A}$ and equal
to $\infty$ on the complement of $P_{A}$ \cite[Prop 3.3]{be-be}.
All that remains is thus to verify the following 

\[
\text{Claim:\,\,\,}(i)\,P_{g}=P_{A}\cap\{\left\langle \xi,\cdot\right\rangle =1\},\,\,\,(ii)\,g^{*}(p)=(g_{A}(s,t))^{*}(p,1)
\]
where $\xi$ is identified with the unit-vector along the $t-$axis
in $\R_{s}^{n}\times\R_{t}$ and $g^{*}(p)$ denotes the Legendre-Fenchel
transform of $g$ on $\R^{n}.$ Indeed, accepting this claim we get
\begin{equation}
\left\Vert \phi_{P_{g}}-g\right\Vert _{L^{\infty}(\R^{n})}=\left\Vert g^{*}\right\Vert _{L^{\infty}(P_{g})}\leq\left\Vert g_{A}^{*}\right\Vert _{L^{\infty}(P_{A})}:=C_{A}<\infty,\label{eq:pf L infty est}
\end{equation}
proving the second bound in \ref{eq:bounds Cf Cg} with $P_{g}$ equal
to the polytope in Claim $(i)$ and $C_{g}=C_{A}$ (the first identity
in formula \ref{eq:pf L infty est} follows from the basic properties
of the Legendre transforms; see \cite[Prop 2.3]{be-be}). Finally,
to prove the Claim above, first note that in terms of the coordinates
$(s,t)$ the Hamiltonian function appearing in formula \ref{eq:gradient eq}
is given by 
\[
H(s,t)=\partial_{t}g_{A}(s,t)
\]
 (as follows directly from the definition of $H$). Hence, by the
definition \ref{eq:def of Phi lambda}, 
\[
g(s)=g_{A}(s,t)-t,\,\,\,t=\partial_{t}g_{A}(s,t)
\]
 But this means that 
\[
g(s)=\inf_{t}\left(g_{A}(s,t)-t\right)
\]
Hence, 
\[
g^{*}(p)=\sup_{s,t}\left(\left\langle p,s\right\rangle -(g_{A}(s,t)-t)\right)=\sup_{s,t}\left(\left\langle p,s\right\rangle +1\cdot t-g_{A}(s,t)\right)=:g_{A}^{*}(p,1),
\]
 which proves $(ii)$ and then $(i)$ follows from the fact that the
closure of the gradient image of a convex function $g$ coincides
with the locus where $g^{*}<\infty$ (by standard convex analysis).
\end{proof}
Now set

\[
\omega_{B}^{\epsilon}:=\theta'+\epsilon\omega_{B},\,\,\,\widetilde{\omega}{}_{B}:=\theta'+\epsilon\omega_{B}+dd^{c}\varphi_{j,\epsilon}
\]
and 
\[
\psi:=\Phi_{B}-f_{\xi},
\]
 which is a basic function on $\mathcal{U}$ which, by Prop \ref{prop:red},
has the property that $\psi\rightarrow-\infty$ at the boundary of
$M'\cap\mathcal{U}$ and $\frac{i}{2\pi}\partial_{B}\bar{\partial}_{B}\psi+\theta'\geq0.$
Consider the following smooth function on $M'\cap\mathcal{U}:$
\[
h:=\log(\text{tr}_{\omega_{B}^{\epsilon}}\widetilde{\omega}{}_{B})+(n+1)\psi_{-,j}-A_{1}(\varphi_{j,\epsilon}-\psi)
\]
 for $A_{1}$ a constant (to be chosen sufficiently large). Since
$\varphi_{j,\epsilon}$ is bounded and $\psi\rightarrow-\infty$ at
the boundary of $M'\cap\mathcal{U}$ the maximum of $h$ is attained
in $M'\cap\mathcal{U}.$ Combining the inequality in Lemma \ref{lem:siu}
with the transversal MA-equation \ref{eq:transv MA eq with eps} and
the uniform $L^{\infty}-$estimate in Lemma \ref{lem:L infty estimate toric with eps}
then yields, precisely as in \cite[Appendix B]{bbegz}, 
\begin{equation}
\sup_{M'\cap\mathcal{U}}\text{tr}_{\omega_{B}^{\epsilon}}\widetilde{\omega}{}_{B}\leq A_{2}e^{-\psi_{-,j}}e^{-A_{1}\psi}\leq A_{2}e^{-\psi_{-}}e^{-A_{1}\psi}\label{eq:sup tr}
\end{equation}
 for some constants $A_{1}$ and $A_{2}$ (only depending on the lower
bound in formula \ref{eq:lower bound ddc psi j}, the upper bound
on the $L^{\infty}-$norms in Lemma\ref{lem:L infty estimate toric with eps}
and the bound $\kappa$ on the bisectional curvatures of the reference
Kähler form $\omega$). 

\subsection{Conclusion of the proof of the regularity of $r^{2}$ on $Y_{reg}$}

By construction the rhs in the estimate \ref{eq:sup tr} is locally
bounded on $M'\cap\mathcal{U}.$ Hence, locally on $\mathcal{U},$
the functions $\varphi_{j,\epsilon}$ have uniformly bounded Laplacians$.$
As a consequence, there exists a subsequence $\varphi_{j,\epsilon(j)}$
converging in the local $C^{1}-$topology on $\mathcal{U}$ to a function
$\phi_{0}$ $\in L^{\infty}(\mathcal{U})$ such that $dd^{c}\phi_{0}$
is locally bounded on $\mathcal{U},$ invariant under both $\xi$
and $J\xi$ and satisfies the ``inhomogeneous'' MA-equation
\begin{equation}
(\theta+dd^{c}\phi_{0})^{m}=i^{m^{2}}r^{-2m}\Omega\wedge\overline{\Omega},\,\,\,dd^{c}\Phi_{0}\geq0\label{eq:inhomogen ma eq on Y}
\end{equation}
on $\mathcal{U},$ identified with $Y_{reg}$ (using the same local
computation as in the proof of  Lemma \ref{lem:transv eq}). But there
is a unique such solution, modulo the addition of a constant. Indeed,
in the toric case this follows from the well-known uniqueness, modulo
additive constants, of solution to the real Monge-{}-Ampère equation
in Lemma \ref{lem:L infty estimate in convex settting} (see \cite[Lemma 3.1]{be}
for a simple proof). Hence, $\varphi_{0}=\varphi'+C.$ This shows
that $\varphi'$ has a locally bounded Laplacian on $\mathcal{U}.$
Thus, applying standard local Evans-Krylov-Trudinger theory to the
fully non-linear PDE \ref{eq:MA eq for r prime} on\emph{ $\mathcal{U}$}
shows that $\varphi'$ is, in fact, smooth on $\mathcal{U}$ (by \cite[Thm 2.6]{bl}
it is enough to know that $\Delta\varphi'\in L_{loc}^{p}$ for $p>2m(m-1)$).
This means that $r^{2}$ is smooth on $Y_{reg},$ as desired. 
\begin{rem}
\label{rem:non-toric gener}The only place where the toric structure
was used was in the $L^{\infty}-$bound in Lemma \ref{lem:L infty estimate toric with eps}
and the uniqueness of bounded pluripotential basic solutions to the
equation $(\theta+dd^{c}\phi_{0})^{m-1}=\nu$, for $\nu$ a given
basic $(m-1,m-1)-$form on $Y_{reg}$ such that the quotient of $\nu$
and $f_{\xi}^{-m}i^{m^{2}}\Omega\wedge\bar{\Omega}(\xi,J\xi,\cdot)$
is bounded. 
\end{rem}

\end{document}